\theoremstyle{plain}
\newtheorem{thm}{Theorem}[section]
\newtheorem{prop}[thm]{Proposition}
\newtheorem{coro}[thm]{Corollary}
\newtheorem{lem}[thm]{Lemma}
\newtheorem{obs}[thm]{Observation}
\newtheorem{thmalpha}{Theorem}
\newtheorem{lemalpha}[thmalpha]{Lemma}
\newtheorem{obsalpha}[thmalpha]{Observation}
\newtheorem*{thm-Closed}{\hyperref[thm:Closed]{Theorem~\ref*{thm:Closed}}}
\newtheorem*{thm-Open}{\hyperref[thm:Open]{Theorem~\ref{thm:Open}}}
\newtheorem*{thm-CSF}{\hyperref[thm:CSF]{Theorem~\ref{thm:CSF}}}
\newtheorem*{thm-Ess}{\hyperref[thm:Ess]{Theorem~\ref{thm:Ess}}}
\theoremstyle{definition}
\theoremstyle{remark}
\newtheorem*{rem}{Remark}
\newcommand{\ts}{\hspace{.11111em}}
\newcommand{\tts}{\hspace{.05555em}}
\newcommand{\nts}{\hspace{-.11111em}}
\newcommand{\ntts}{\hspace{-.05555em}}
\newcommand{\bbZ}{\mathbb{Z}}
\newcommand{\bbA}{\mathbb{A}}
\newcommand{\bsM}{{\boldsymbol{M}}}
\newcommand{\bsZ}{{\boldsymbol{Z}}}
\newcommand{\bsX}{{\boldsymbol{X}}}
\newcommand{\bsE}{{\boldsymbol{E}}}
\newcommand{\bsS}{{\boldsymbol{S}}}
\newcommand{\bsH}{{\boldsymbol{H}}}
\newcommand{\bsP}{{\boldsymbol{P}}}
\newcommand{\bsF}{{\boldsymbol{F}}}
\newcommand{\bsT}{{\boldsymbol{T}}}
\newcommand{\bsw}{{\boldsymbol{w}}}
\newcommand{\bszero}{{\boldsymbol{0}}}
\newcommand{\bszeta}{{\boldsymbol{\zeta}}}
\newcommand{\bsvarphi}{{\boldsymbol{\varphi}}}
\newcommand{\bssigma}{{\boldsymbol{\sigma}}}
\newcommand{\bsgamma}{{\boldsymbol{\gamma}}}
\newcommand\fakeslant[1]{\pdfliteral{1 0 0.25 1 0 0 cm}#1\pdfliteral{1 0 -0.25 1 0 0 cm}}
\newcommand{\mathbbs}[1]{\mathbb{\fakeslant{#1}}}
\newcommand{\bbsR}{\mathbbs{R}\tts}
\newcommand{\nil}{\varnothing}
\newcommand{\ssminus}{\smallsetminus}
\newcommand{\bbC}{\mathbb{C}}
\newcommand{\bbZtwo}{\mathbb{Z}_{\nts/ 2\mathbb{Z}}}
\newcommand{\bbZp}{\mathbb{Z}_{\nts/ \ntts p\mathbb{Z}}}
\newcommand{\Sys}{\operatorname{\mathsf{Sys}}\tts}
\newcommand{\Vol}{\operatorname{\mathsf{Vol}}\tts}
\newcommand{\Area}{\operatorname{\mathsf{Area}}\tts}
\newcommand{\Length}{\operatorname{\mathsf{Length}}\tts}
\newcommand{\Sph}{\bsS} \newcommand{\sph}{\mathsf{sph}}
\newcommand{\Euc}{\bsE} \newcommand{\euc}{\mathsf{euc}}
\newcommand{\Hyp}{\bsH} 
\newcommand{\Geo}{\bsX}
\newcommand{\SR}{\bsS^2 \times \bbsR}
\newcommand{\HR}{\bsH^2 \times \bbsR}
\newcommand{\Sol}{\boldsymbol{S} \ntts \boldsymbol{o} \tts \boldsymbol{l}}
\newcommand{\Nil}{\boldsymbol{N} \nts \boldsymbol{i} \tts \boldsymbol{l}}
\newcommand{\SLtwo}{\boldsymbol{S} \nts \boldsymbol{L}^{\raise.2ex\hbox{$\scriptstyle\bm{\sim}$}}_2}
\newcommand{\SL}{\mathit{SL}}
\newcommand{\Homeo}{\mathsf{Homeo}}
\newcommand{\Mod}{\mathsf{Mod}}
\newcommand{\interior}{\operatorname{int}\tts}
\newcommand{\csum}{\mathbin{\#}}
\newcommand{\RP}{\bbsR \ntts P}
\newcommand{\RProj}{\bbsR \ntts \bsP}
\newcommand{\PD}{\mathit{PD}}
\newcommand{\PLD}{\mathit{LD}}
\newcommand{\Ksupp}{{\mathcal{K}}}
\newcommand{\Fsupp}{{\mathcal{F}}}
\begin{document}


\title[On Isosystolic Inequalities for $T^n$, $\RP^n$, and $M^3$]{On Isosystolic Inequalities for $\bsT^n$, $\RProj^n$, and $\bsM^3$}
\author[Nakamura]{Kei Nakamura}
\address{Department of Mathematics\\ Temple University \\ Philadelphia, PA 19106}
\curraddr{Department of Mathematics\\ University of California \\ Davis, CA 95616}
\email{knakamura@math.ucdavis.edu}
\subjclass[2010]{Primary 53C23}

\begin{abstract}
If a closed smooth $n$-manifold $M$ admits a finite cover $\hat M$ whose $\bbZtwo$-cohomology has the maximal cup-length, then for any riemannian metric $g$ on $M$, we show that the systole $\Sys(M,g)$ and the volume $\Vol(M,g)$ of the riemannian manifold $(M,g)$ are related by the following \emph{isosystolic inequality}:
\begin{align*}
\Sys(M,g)^n & \leq n! \Vol(M,g).
\end{align*}
The inequality can be regarded as a generalization of Burago and Hebda's inequality for closed essential surfaces and as a refinement of Guth's inequality for closed $n$-manifolds whose $\bbZtwo$-cohomology has the maximal cup-length. We also establish the same inequality in the context of possibly non-compact manifolds under a similar cohomological condition. The inequality applies to (i) $T^n$ and all other compact euclidean space forms, (ii) $\RP^n$ and many other spherical space forms including the Poincar\'e dodecahedral space, and (iii) most closed essential 3-manifolds including all closed aspherical 3-manifolds.
\end{abstract}

\maketitle
\setcounter{tocdepth}{1}


\section{Introduction} \label{sec:Intro}

For a complete riemannian $n$-manifold $(M, g)$, the \emph{systole} $\Sys(M,g)$ is the infimum of the length of non-contractible loops in $(M,g)$; when $M$ is a closed manifold, the systole is realized by the shortest non-contractible geodesic loop. For a fixed underlying smooth $n$-manifold $M$, an \emph{isosystolic inequality} for $M$ takes the form
\[
\Sys(M,g)^n \leq C \Vol(M,g)
\]
where $C$ is a constant, independent of a metric $g$ on $M$. Such a constant $C$ will be referred to as an \emph{isosystolic constant} for $M$.

\subsection{Essential Surfaces} \label{ssec:IntroInequality}

Isosystolic inequalities have been studied extensively for closed surfaces. We mention a few results in particular; see \cite[\S1]{Croke-Katz:Survey} for a brief survey, or \cite[Part~1]{Katz:Book} for a broader overview. The study of isosystolic inequalities goes back to the following remarkable theorems by Loewner and Pu \cite{Pu}. 

\begin{thmalpha}[Loewner] \label{thm:Loewner}
For any riemannian metric $g$ on the 2-torus $T^2 \nts$,
\[
\Sys(T^2\nts,g)^2 \leq \frac{2}{\sqrt{3}} \Vol(T^2\nts,g).
\]
Moreover, $2/\sqrt{3}$ is the optimal isosystolic constant for $T^2\nts$.
\end{thmalpha}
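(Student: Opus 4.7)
My plan is to follow the classical conformal-averaging strategy due to Loewner, which hinges on the uniformization of $T^2$ and the geometry of the modular fundamental domain.

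First, I would invoke the uniformization theorem to write $g = e^{2\varphi} g_0$ for some flat metric $g_0$ on $T^2$ and some smooth function $\varphi \colon T^2 \to \bbsR$. Realize $(T^2,g_0)$ as $\bbsR^2/\Lambda$ for a lattice $\Lambda \subset \bbsR^2$; after acting by $\mathit{SL}(2,\bbZ)$ and rescaling, I may assume $\Lambda$ has a shortest nonzero vector of length $L_0$ horizontal and the corresponding fundamental parallelogram has height $h$, with $L_0 h = \Vol(T^2,g_0)$.

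Next, I would exploit the systolic foliation of $(T^2,g_0)$. The translates of the horizontal systolic geodesic give a one-parameter family $\{\gamma_t\}_{t \in [0,h]}$ of closed geodesics of $g_0$, each of length $L_0$ and non-contractible in $T^2$. Hence for every $t$,
\[
\Sys(T^2\nts,g) \ \leq\ \int_0^{L_0} e^{\varphi(\gamma_t(s))}\,ds.
\]
Integrating over $t$ and applying Cauchy--Schwarz yields
\[
h \cdot \Sys(T^2\nts,g) \ \leq\ \iint e^{\varphi}\, ds\, dt \ \leq\ \Vol(T^2\nts,g_0)^{1/2} \cdot \Vol(T^2\nts,g)^{1/2},
\]
since $\iint e^{2\varphi}\,ds\,dt = \Vol(T^2\nts,g)$. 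Rearranging using $L_0 h = \Vol(T^2\nts,g_0)$ gives
\[
\Sys(T^2\nts,g)^2 \ \leq\ \frac{L_0}{h}\, \Vol(T^2\nts,g).
\]

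The remaining step, and the heart of the argument, is the purely arithmetic fact that $L_0/h \leq 2/\sqrt{3}$ for every lattice in $\bbsR^2$, with equality precisely for the hexagonal lattice. To see this, move $\Lambda$ into the standard fundamental domain of $\mathit{SL}(2,\bbZ)$ acting on the upper half-plane via $\tau = (L_0 + ih)/L_0$ up to normalization; the constraints $|\tau| \geq 1$ and $|\Re(\tau)|\leq 1/2$ force $\Im(\tau) \geq \sqrt{3}/2$, attained only at $\tau = e^{i\pi/3}$. Translating this back gives $h/L_0 \geq \sqrt{3}/2$ and establishes the inequality. I expect this lattice-geometry step, while short, to be the conceptual crux; everything else is averaging.

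For optimality, I would exhibit the flat hexagonal torus $\bbsR^2/\Lambda_{\mathrm{hex}}$: there $\varphi$ is constant (so Cauchy--Schwarz is an equality), the systolic foliation is genuinely realized by the shortest geodesics, and $L_0/h = 2/\sqrt{3}$ by the arithmetic discussion above. Tracing through the computation shows $\Sys^2 = (2/\sqrt{3})\Vol$, confirming that no smaller isosystolic constant is possible.
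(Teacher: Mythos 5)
The paper does not prove Loewner's theorem: it quotes it as a classical background result (Theorem~\ref{thm:Loewner}), attributing it to Loewner and citing Pu's paper \cite{Pu}, before turning to the higher-dimensional inequalities that are the paper's actual contribution. So there is no in-paper proof to compare against. That said, your proposal is a correct and essentially complete reconstruction of the standard conformal-averaging argument, and the logic goes through.

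Two small remarks on details. First, your formula ``$\tau = (L_0 + ih)/L_0$'' for the lattice parameter is slightly off: writing the two lattice generators as complex numbers $v_1 = L_0$ and $v_2 = a + ih$ with $|a| \leq L_0/2$ after reduction, the period ratio is $\tau = (a+ih)/L_0$; your ``up to normalization'' hedge suggests you noticed something was loose. The reduction conditions $|\mathrm{Re}\,\tau| \leq 1/2$ and $|\tau| \geq 1$ (the latter being the shortest-vector condition $|v_2| \geq |v_1|$) are exactly what give $\mathrm{Im}\,\tau = h/L_0 \geq \sqrt{3}/2$, so the conclusion $L_0/h \leq 2/\sqrt{3}$ and the equality case at $\tau = e^{i\pi/3}$ are right. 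Second, the step where you integrate over $t \in [0,h]$ tacitly uses that, in the orthonormal frame aligned with $v_1$, the rectangle $[0,L_0)\times[0,h)$ is a fundamental domain for $\Lambda$, so that $\int_0^h\!\int_0^{L_0} e^{\varphi}\,ds\,dt = \int_{T^2} e^{\varphi}\,dA_{g_0}$; this is true but worth spelling out, since the lattice is generally not rectangular and the parallelogram fundamental domain slants. With those minor clarifications the argument is complete, and the optimality discussion via the flat hexagonal torus is correct as stated.
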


\begin{thmalpha}[Pu] \label{thm:Pu}
For any riemannian metric $g$ on the real projective plane $\RP^2\nts$,
\[
\Sys(\RP^2\nts,g)^2 \leq \frac{\pi}{2} \Vol(\RP^2\nts,g).
\]
Moreover, $\pi/2$ is the optimal isosystoic constant for $\RP^2\nts$.
\end{thmalpha}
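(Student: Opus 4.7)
The plan is to exploit the conformal rigidity of $\RP^2$ together with the homogeneity of its round metric, via an averaging argument over closed geodesics that is classical in integral geometry.

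By the uniformization theorem, any riemannian metric $g$ on $\RP^2$ is conformally equivalent to the round metric $g_0$ of constant curvature $+1$, so $g = f^2 g_0$ for some smooth positive function $f$ on $\RP^2$. Recall that every closed $g_0$-geodesic $\gamma$ (the projection of a great circle of $\bsS^2$) has $g_0$-length $\pi$ and that $\Vol(\RP^2, g_0) = 2\pi$. For each such $\gamma$, Cauchy--Schwarz yields
\[
\Length(\gamma, g)^2 = \Bigl( \int_\gamma f \, ds_0 \Bigr)^2 \leq \pi \int_\gamma f^2 \, ds_0,
\]
and since $\gamma$ represents the nontrivial class in $\pi_1(\RP^2)$, I obtain $\Sys(\RP^2, g)^2 \leq \pi \int_\gamma f^2 \, ds_0$.

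I would then average this pointwise bound over the space $\mathcal{G}$ of closed $g_0$-geodesics, which carries a natural $SO(3)$-invariant probability measure $\mu$. By Fubini applied to the incidence variety $\{(\gamma, p) : p \in \gamma\} \subset \mathcal{G} \times \RP^2$ and by the $SO(3)$-homogeneity of $\RP^2$, there is a constant $C$, independent of $h$, with
\[
\int_\mathcal{G} \int_\gamma h \, ds_0 \, d\mu(\gamma) = C \int_{\RP^2} h \, dA_0
\]
for every continuous $h$; testing with $h \equiv 1$ forces $\pi = C \cdot 2\pi$, so $C = 1/2$. Applying the identity with $h = f^2$ and combining with the previous step gives
\[
\Sys(\RP^2, g)^2 \leq \frac{\pi}{2} \int_{\RP^2} f^2 \, dA_0 = \frac{\pi}{2} \Vol(\RP^2, g).
\]
Optimality follows because at $g = g_0$ both sides equal $\pi^2$.

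The most delicate point is the identification of the constant $C$, which uses crucially the transitivity of $SO(3)$ on both $\RP^2$ and $\mathcal{G}$, together with the fact that every simple closed $g_0$-geodesic has the same length. This averaging trick is special to $\RP^2$ and is not known to extend to $\RP^n$ with $n \geq 3$ nor to $T^n$ with $n \geq 2$, which is precisely why the paper develops the conceptually different cup-length approach for the results of interest.
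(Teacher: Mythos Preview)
Your argument is correct and is essentially Pu's original proof: uniformize to the round metric, bound the systole by the $g$-length of each round geodesic via Cauchy--Schwarz, and average over the $SO(3)$-homogeneous space of round geodesics using the integral-geometric identity you derive. The computation of $C=1/2$ and the verification of equality at $g=g_0$ are both fine.

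Note, however, that the paper does not supply its own proof of this statement: \hyperref[thm:Pu]{Theorem~\ref*{thm:Pu}} is stated as a classical result of Pu with a citation, alongside Loewner's inequality, and serves only as background and motivation. So there is no ``paper's proof'' to compare against --- you have reproduced the standard argument the citation points to. Your closing remark is accurate: the conformal-averaging method is specific to $\RP^2$ (and, with modifications, $T^2$), and the paper's cup-length machinery is developed precisely because this method does not extend to higher dimensions.
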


For the Klein bottle $\RP^2\csum\RP^2 \nts$, Bavard \cite{Bavard:Klein} established an isosystolic inequality with the optimal isosystolic constant $\pi/(2\sqrt{2})$. These three surfaces $T^2 \nts$, $\RP^2 \nts$, and $\RP^2\csum\RP^2\nts$, are the only manifolds in any dimension for which the optimal isosystolic constants are known; any ``optimal'' isosystolic inequalities for other manifolds in the literature, e.g. \cite{Katz-Sabourau:CAT0}, require restrictions on the class of metrics in consideration. Generally, identifying the optimal isosystolic constant for any manifold is notoriously difficult. 

A closed surface is said to be \emph{essential} if it is not $S^2$. For all essential surfaces, some isosystolic inequality holds. For orientable surfaces, an inequality with an isosystolic constant $C=2$ appeared implicitly in the work of Besicovitch \cite{Besicovitch} and Rodin \cite{Rodin}, and explicitly in the work of Burago \cite[Ch.1 \S5]{Burago-Zalgaller} and Hebda \cite{Hebda}; as Gromov noted \cite[\S5.1]{Gromov:Filling}, Hebda's proof applies to non-orientable surfaces with little modification. For aspherical surfaces, Gromov improved the inequality \cite[\S5.2]{Gromov:Filling} with an isosystolic constant $C=4/3$. Hence, $C=\pi/2$ is the optimal \emph{universal} isosystolic constant for essential surfaces, by the work of Pu ($\pi/2$ is optimal for $\RP^2$) and Gromov ($4/3$ suffices for aspherical surfaces).

\subsection{Essential Manifolds}

The underlying reason for the existence of isosystolic inequalities is in the relationship between the geometry and the topology of the manifold. Gromov addressed this issue with great insights and established his celebrated isosystolic inequality for all \emph{essential} manifolds \cite[\S0.1]{Gromov:Filling} in all dimensions; here, a closed manifold $M$ is said to be \emph{essential} if there is an aspherical space $K$ and a map $f:M \rightarrow K$ such that $f$ represents a non-zero homology class in $H_n(K;\bbA)$, with $\bbA=\bbZ$ if $M$ is orientable and $\bbA=\bbZtwo$ if $M$ is non-orientable.

\begin{thmalpha}[Gromov] \label{thm:Gromov}
For each dimension $n$, there exists a constant $C_n$ such that, for any closed essential $n$-manifold $M$ and any riemannian metric $g$ on $M$,
\begin{align*}
\Sys(M,g)^n & \leq C_n \Vol(M,g).
\end{align*}
Explicitly, one may take $C_n=\big(\ts 6(n+1) \ntts\cdot n^n \ntts\cdot\ntts \sqrt{(n+1)!}\; \big)^n$.
\end{thmalpha}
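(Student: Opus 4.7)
The plan is to follow Gromov's strategy from \emph{Filling Riemannian manifolds}, which passes through an intermediate metric invariant, the \emph{filling radius}. The proof splits naturally into a definitional setup, a systole-to-filling comparison in which essentialness is used, and a purely Riemannian filling-radius-to-volume inequality; combining the latter two yields the desired estimate.

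First, I would introduce the Kuratowski embedding $\iota \colon M \hookrightarrow L^{\infty}(M)$ defined by $\iota(p) = d_g(p, \cdot\ts)$, which is an isometric embedding of $(M, g)$ into a Banach space. The filling radius $\operatorname{\mathsf{FillRad}}(M, g)$ is the infimum of $\epsilon > 0$ such that the fundamental class $[M] \in H_n(M;\bbA)$ is killed by the inclusion $\iota(M) \hookrightarrow U_\epsilon(\iota(M))$ into the metric $\epsilon$-neighborhood, with $\bbA = \bbZ$ or $\bbZtwo$ according to orientability. Intuitively this measures how thin a collar of $M$ in $L^\infty(M)$ must be before $M$ becomes a boundary.

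Second, I would prove the comparison $\Sys(M, g) \leq 6 \operatorname{\mathsf{FillRad}}(M, g)$ whenever $M$ is essential. Assuming toward a contradiction that $\operatorname{\mathsf{FillRad}}(M, g) < \Sys(M, g)/6$, one chooses a filling $(n+1)$-chain $W \subset U_\epsilon(\iota(M))$ and extends the classifying map $f \colon M \to K(\pi_1 M, 1)$ across $W$ skeleton-by-skeleton. The loops appearing on $2$-cells of $W$ have length bounded by a constant multiple of $\epsilon$ via triangle-inequality estimates inside the $\epsilon$-neighborhood, hence are shorter than $\Sys(M,g)$ and therefore contractible in $M$; the asphericity of $K(\pi_1 M, 1)$ then lets the extension continue through the higher skeleta without obstruction. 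The resulting extension witnesses $f_*[M] = 0$, contradicting essentialness; the constant $6$ appears in the sharpest estimate of the loop lengths.

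Third and most technically, I would establish the purely Riemannian filling-radius-to-volume inequality
\[
\operatorname{\mathsf{FillRad}}(M, g)^n \leq \bigl(\ts (n+1) \ntts\cdot n^n \ntts\cdot\ntts \sqrt{(n+1)!}\,\bigr)^n \Vol(M, g),
\]
valid for every closed Riemannian $n$-manifold with no essentialness hypothesis. An explicit filling $(n+1)$-chain of $\iota(M)$ is built by choosing a covering of $M$ by balls whose radii are tuned to local volume densities, triangulating an associated nerve, coning each simplex to a chosen vertex in $L^\infty(M)$, and estimating the resulting $(n+1)$-dimensional mass by a careful weighting-and-counting argument. Combining this with the comparison from the previous paragraph yields
\[
\Sys(M, g)^n \leq \bigl(\ts 6(n+1) \ntts\cdot n^n \ntts\cdot\ntts \sqrt{(n+1)!}\,\bigr)^n \Vol(M, g),
\]
which is exactly the asserted bound with $C_n$ as stated. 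The main obstacle is this third step: bounding $\operatorname{\mathsf{FillRad}}$ purely in terms of $\Vol$ is the substantial geometric content of Gromov's paper, and extracting the explicit constant $(n+1) n^n \sqrt{(n+1)!}$ requires a delicate optimization within the covering-and-coning construction, whereas the first two steps are largely formal once the Kuratowski embedding and the notion of essentialness are in hand.
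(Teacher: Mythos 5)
The paper does not prove this statement: it is labeled as Theorem~\hyperref[thm:Gromov]{B} (the alphabetically-numbered environment \texttt{thmalpha} is used for cited background results), stated only as motivation in the introduction with a citation to Gromov's \emph{Filling Riemannian Manifolds} \cite{Gromov:Filling}, and the body of the paper neither proves nor re-derives it. So there is no ``paper's own proof'' to compare against.

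That said, your sketch does correctly reconstruct the architecture of Gromov's original argument. The three-step structure you describe --- introduce the Kuratowski embedding and the filling radius; show $\Sys(M,g) \leq 6\,\operatorname{\mathsf{FillRad}}(M,g)$ for essential $M$ by extending the classifying map over a putative short filling chain, using that loops in the filling are short enough to be contractible; and establish the purely metric bound $\operatorname{\mathsf{FillRad}}(M,g) \leq (n+1)\ntts\cdot n^n \ntts\cdot\ntts \sqrt{(n+1)!}\;\Vol(M,g)^{1/n}$ by a covering/nerve/coning construction in $L^\infty(M)$ --- is exactly the decomposition in Gromov's paper, and the arithmetic combination $6^n \cdot \bigl((n+1)\ntts\cdot n^n \ntts\cdot\ntts \sqrt{(n+1)!}\,\bigr)^n$ reproduces the asserted constant. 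Two caveats worth flagging for your own awareness. First, your step two glosses over the fact that the filling chain $W$ lives in $L^\infty(M)$ rather than in a finite-dimensional space, so the ``skeleton-by-skeleton'' extension requires first replacing $W$ by a polyhedral approximation with controlled simplex sizes before any cell-by-cell argument can run; this is not a gap in the idea but it is where real work hides. Second, your step three, which you candidly call the main obstacle, is indeed the substantial content of Gromov's paper and cannot be taken for granted at the level of detail given --- in particular, the passage from a cover of $M$ to a controlled $(n+1)$-chain uses Gromov's Federer--Fleming-type deformation argument and a careful choice of scales, not merely ``coning to a vertex.'' As a blind reconstruction of a famously hard theorem that the present paper only cites, the sketch is faithful and well-organized, but it is a roadmap rather than a proof.
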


Essential manifolds generalize essential surfaces. Every aspherical manifold $M \simeq K(\pi_1(M),1)$, such as the $n$-torus $T^n$, is essential via the identity map; the real projective $n$-space $\RP^n$ is essential via the natural inclusion map $\RP^n \hookrightarrow \RP^\infty \simeq K(\pi_1(\RP^n\ntts),1)$. Babenko \cite{Babenko:Asymptotic} showed that an orientable manifold $M$ is essential if and only if some isosystolic inequality holds for $M$.

The universal constant $C_n$ in \hyperref[thm:Gromov]{Theorem~\ref*{thm:Gromov}} is quite large even for small $n$, and grows rapidly with respect to $n$; it is hardly optimal for any individual manifold, and it is not meant to be optimal as a universal constant for essential $n$-manifolds. For dimension 2, Gromov's universal constant is $C_2=31,\!104$ although we know that the optimal universal constant for essential surfaces is $\pi/2$. In dimension 3, Gromov's universal constant works out to be $C_3 \approx 31,\!992,\!036,\!026$ or roughly 32 billion. 
Wenger showed in \cite{Wenger} that one can take the universal constant for essential $n$-manifolds to be $C_n=(\tts 6 \cdot 27^n \cdot (n+1)! \tts)^n$ which has a substantially slower growth rate than Gromov's constant; it should be noted, however, that $n$ needs to be quite large for Wenger's constant to be smaller than Gromov's constant.

\subsection{Maximal Cup-Length}

While finding the optimal isosystolic constant is extremely difficult for any individual manifold in general, one hopes to find a better isosystolic inequality for some manifold or for some class of manifolds. It is known from Gromov's work \cite[\S3.C]{Gromov:Intersystolic} that isosystolic inequalties can be deduced from the non-degeneracy of cup products. Using \emph{nearly minimizing} hypersurfaces as the primary tool in the proof, Guth established in \cite{Guth:Hypersurfaces} an isosystolic inequality for any closed manifold whose $\bbZtwo$-cohomology has the maximal cup-length.

\begin{thmalpha}[Guth] \label{thm:Guth}
Let $M$ be a closed smooth $n$-manifold. Suppose that there exist (not necessarily distinct) cohomology classes $\bszeta_1, \cdots, \bszeta_n$ in $H^1(M;\bbZtwo)$ such that $\bszeta_1 \cup \cdots \cup \bszeta_n \neq \bszero$ in $H^n(M;\bbZtwo)$. Then, for any riemannian metric $g$ on $M$,
\begin{align*}
\Sys(M,g)^n & \leq (8n)^n \Vol(M,g).
\end{align*}
\end{thmalpha}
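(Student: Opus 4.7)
The plan is to carry out Gromov and Guth's cup-length scheme: represent each class $\bszeta_i \in H^1(M;\bbZtwo)$ by a nearly minimizing codimension-one hypersurface $\Sigma_i \subset M$, iteratively intersect the $\Sigma_i$, and use the nonvanishing of the cup product to turn local area bounds on the $\Sigma_i$ into a global volume lower bound for $M$. The geometric backbone is that, below the systolic threshold, small balls in $M$ are topologically trivial in $M$ to a degree sufficient to allow local surgery on each $\Sigma_i$ without changing its homology class.

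First, for each $\bszeta_i$, I would construct a codimension-one $\bbZtwo$-cycle $\Sigma_i$ Poincar\'e dual to $\bszeta_i$ and minimizing within its $\bbZtwo$-homology class---via Almgren--Pitts min-max theory, Federer's rectifiable chain theory, or a sufficiently fine simplicial approximation. Setting $s = \Sys(M,g)$, every loop in a ball $B(x,r)$ with $r < s/2$ has length less than $s$ and is therefore null-homotopic in $M$, so $B(x,r) \hookrightarrow M$ kills $\pi_1$ and induces the zero map on $H_1(\ts\cdot\ts;\bbZtwo)$. Consequently, any portion of $\Sigma_i$ sitting inside $B(x,r)$ admits a local replacement---a ``cone'' onto $\Sigma_i \cap \partial B(x,r)$---that stays in the same class $[\Sigma_i] \in H_{n-1}(M;\bbZtwo)$; combined with a cone-type isoperimetric inequality in the ball and the minimality of $\Sigma_i$, this yields a local area bound
\[
\mathrm{vol}_{n-1}(\Sigma_i \cap B(x,r)) \;\leq\; k_n \ts r^{n-1}, \qquad r < s/2,
\]
with $k_n$ linear in $n$.

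Next I would iterate in codimension. After a small perturbation the $\Sigma_i$ become pairwise transverse, and the cup-product hypothesis ensures that $\Sigma_1 \cap \cdots \cap \Sigma_n$ is a nonempty $0$-cycle mod~$2$. The local bound above, applied successively inside each $\Sigma_1 \cap \cdots \cap \Sigma_k$ viewed as the ambient submanifold for $\Sigma_{k+1}$, propagates by a coarea argument to
\[
\mathrm{vol}_{n-k}(\Sigma_1 \cap \cdots \cap \Sigma_k \cap B(x,r)) \;\leq\; k_n^{\ts k} \ts r^{n-k}, \qquad r < s/2.
\]
Summing over a Vitali cover of $M$ by balls of radius $r$ just below $s/2$ promotes this to the global bound $\mathrm{vol}_0(\Sigma_1 \cap \cdots \cap \Sigma_n) \leq K_n^n \ts s^{-n} \ts \Vol(M,g)$; the cup-product hypothesis forces the left side to be $\geq 1$, giving $s^n \leq K_n^n \ts \Vol(M,g)$, and tracking the cone and covering constants through the argument one arrives at $K_n = 8n$.

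The main obstacle is Step~1: producing a minimizing representative of $\bszeta_i$ together with the uniform local area bound $k_n r^{n-1}$ in every subsystolic ball, with $k_n$ of order $n$. This is the technical heart of Guth's argument and the source of the explicit constant $(8n)^n$; the subsequent intersection-and-covering steps are conceptually routine once the codimension-one bound is in place, modulo the verification that near-minimality propagates under transverse restriction to successive slices.
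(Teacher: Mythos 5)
Your proposal inverts the direction of the key geometric estimate, and as a result the argument cannot be made to work. Guth's scheme (and the paper's refinement in \S\ref{sec:Closed}) does \emph{not} seek an upper bound on the area of a minimizing hypersurface in subsystolic balls followed by a covering argument; it instead finds \emph{one} point $z$ on a nearly-minimizing hypersurface $Z$ and proves a \emph{lower} bound $\Vol(B(z,R))\geq c_n R^n$ for $R<\Sys/2$. Concretely, the Area Comparison Lemma gives $\Area(\partial\bar B(z,r))\geq 2\,\Area(Z\cap\bar B(z,r))-2\delta$, an inductive restriction of $\bszeta_1,\ldots,\bszeta_{n-1}$ to $Z$ gives a lower bound on $\Area(D(z,r))$, and the coarea formula integrates these into the ball-volume lower bound. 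One good ball suffices.

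Your Step~1 claims the reverse inequality, $\mathrm{vol}_{n-1}(\Sigma_i\cap B(x,r))\leq k_n r^{n-1}$ with $k_n$ linear in $n$, valid for every subsystolic ball. This does not follow from minimality. What minimality buys you is a \emph{comparison}: replacing $\Sigma_i\cap B(x,r)$ by a piece of $\partial B(x,r)$ gives $\Area(\Sigma_i\cap B(x,r))\leq \tfrac12\Area(\partial B(x,r))$, and a cone comparison gives $\Area(\Sigma_i\cap B(x,r))\leq r\cdot\mathrm{vol}_{n-2}(\Sigma_i\cap\partial B(x,r))$ up to constants. Neither right-hand side is bounded above by a universal multiple of $r^{n-1}$ for a general Riemannian metric, because $\Area(\partial B(x,r))$ has no a priori upper bound --- the whole content of the isosystolic problem is precisely the absence of any such a priori metric control. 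Your null-homotopy claim is also an overstatement (a loop of length $<s$ lying in a $(s/2)$-ball can still wind arbitrarily; what holds, via the Curve Factoring Lemma, is the weaker homological statement that 1-cycles in $B(x,r)$ decompose into cycles of length $<s$), but that is a repairable technicality; the direction of the area estimate is not.

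Even granting the local upper bound, the Vitali/packing step is circular. To conclude $\#(\Sigma_1\cap\cdots\cap\Sigma_n)\leq K_n^n\,s^{-n}\Vol(M,g)$ from per-ball bounds you need the number of essentially disjoint balls of radius $r$ to be at most $O(\Vol(M,g)/r^n)$, which requires a lower bound $\Vol(B(x,r))\gtrsim r^n$ --- exactly the quantity the argument is supposed to prove. A separate gap: your ``iterate in codimension'' step treats $\Sigma_1\cap\cdots\cap\Sigma_{k+1}$ as nearly minimizing inside $\Sigma_1\cap\cdots\cap\Sigma_k$, which does not follow from $\Sigma_{k+1}$ being nearly minimizing in $M$; the paper instead re-runs the minimization inside the lower-dimensional hypersurface after restricting the cohomology classes to it. To salvage your write-up you would have to flip the inequalities, center the ball on the hypersurface, induct on the restricted classes rather than on transverse intersections, and drop the covering step entirely --- at which point you recover Guth's actual argument.
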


Manifolds satisfying the hypothesis of Guth's theorem include many important essential manifolds, such as tori $T^n$ and real projective spaces $\RP^n$ in all dimensions, as well as all essential surfaces. In particular, \hyperref[thm:Guth]{Theorem~\ref*{thm:Guth}} provides the best previously known isosystolic constant $C_n=(8n)^n$ for $T^n$ and $\RP^n$ with $n \geq 3$. It should be emphasized that Guth's constant is significantly smaller and grows substantially more slowly than the universal constant given by Gromov or by Wenger. We record an improved inequality in \hyperref[sec:Closed]{\S\ref*{sec:Closed}}:

\begin{thm-Closed} \label{thm:Closed-restated}
Let $M$ be a closed smooth $n$-manifold, and let $\hat M$ be a finite-degree cover of $M$. Suppose that there exist (not necessarily distinct) cohomology classes $\bszeta_1, \cdots, \bszeta_n$ in $H^1(\hat M;\bbZtwo)$ such that $\bszeta_1 \cup \cdots \cup \bszeta_n \neq \bszero$ in $H^n(\hat M;\bbZtwo)$.
Then, for any riemannian metric $g$ on $M$,
\begin{align*}
\Sys(M,g)^n & \leq n! \Vol(M,g).
\end{align*}
\end{thm-Closed}

Our inequality can be regarded as a higher-dimensional generalization of Hebda's (with $C_2=2$ in dimension $n=2$), as well as a refinement of Guth's (with $C_n=(8n)^n$ in any dimension $n$). In order to establish the inequality, we adapt Guth's approach \cite{Guth:Hypersurfaces} and utilize nearly minimizing hypersurfaces. We note that our inequality holds for a broader class of manifolds in comparison to \hyperref[thm:Guth]{Theorem~\ref*{thm:Guth}}; this is achieved with a covering argument in \cite{KKSSW}.

We also prove in \hyperref[sec:Open]{\S\ref*{sec:Open}} a more general version of \hyperref[thm:Closed-restated]{Theorem~\ref*{thm:Closed}} for manifolds that are possibly non-compact. In this general context, our main result can be stated in terms of  cohomology $H^*_\Ksupp(M;\bbZtwo)$ with \emph{compact support} and ``ordinary'' cohomology $H^*_\Fsupp(M;\bbZtwo) = H^*(M;\bbZtwo)$ with closed support as follows.

\begin{thm-Open} \label{thm:Open-restated}
Let $M$ be a (not necessarily closed) smooth $n$-manifold, and let $\hat M$ be a (possibly infinite-degree) cover of $M$. Suppose that there exist (not necessarily distinct) cohomology classes $\bszeta_1, \cdots, \bszeta_n$ in $H^1_\Fsupp(\hat M;\bbZtwo)$ or $H^1_\Ksupp(\hat M;\bbZtwo)$, with at least one of them in $H^1_\Ksupp(\hat M;\bbZtwo)$, such that $\bszeta_1 \cup \cdots \cup \bszeta_n \neq \bszero$ in $H^n_\Ksupp(\hat M;\bbZtwo)$. Then, for any complete riemannian metric $g$ on $M$,
\begin{align*}
\Sys(M,g)^n &\leq n! \Vol(M,g).
\end{align*}
\end{thm-Open}

\subsection{Applications}

A large portion of this article is devoted to the presentation of examples of closed manifolds, to which \hyperref[thm:Closed-restated]{Theorem~\ref*{thm:Closed}} or \hyperref[thm:Open-restated]{Theorem~\ref*{thm:Open}} can be applied. We establish the best known isosystolic inequality in many cases.

In \hyperref[sec:TRP]{\S\ref*{sec:TRP}}, we consider \emph{compact space forms} with non-negative sectional curvature. The primary examples of closed manifolds whose $\bbZtwo$-cohomology have the maximal cup-length are the $n$-torus $T^n$ and the real projective $n$-space $\RP^n$ for all $n$, and \hyperref[thm:Closed-restated]{Theorem~\ref*{thm:Closed}} establishes isosystolic constants $C_n=n!$ for them; for $n \geq 3$, our constant improves upon the best previously known constant $C_n=(8n)^n$ by Guth (\hyperref[thm:Guth]{Theorem~\ref*{thm:Guth}}). More generally, \hyperref[thm:Closed-restated]{Theorem~\ref*{thm:Closed}} applies to all compact euclidean space forms and many spherical space forms.

\begin{thm-CSF} \label{thm:CSF-restated}
Let $M$ be a closed $n$-manifold. Suppose that $M$ is homeomorphic to either (i) a compact euclidean space form, such as the $n$-torus $T^n\nts$, or (ii) a spherical space form with even-order fundamental group, such as the real projective $n$-space $\RP^n\nts$. Then, for any riemannian metric $g$ on $M$,
\begin{align*}
\Sys(M,g)^n &\leq n! \, \Vol(M,g).
\end{align*}
\end{thm-CSF}

In \hyperref[sec:Geom3]{\S\ref*{sec:Geom3}}, \hyperref[sec:Asph3]{\S\ref*{sec:Asph3}}, and \hyperref[sec:Ess3]{\S\ref*{sec:Ess3}}, we discuss applications of our results to closed 3-manifolds. For many of these 3-manifolds, the best previously known isosystolic constant was either Gromov's ($C_3 \approx 32$ billion) or Guth's ($C_3 = 13,\!842$). It turns out that \hyperref[thm:Open-restated]{Theorem~\ref*{thm:Open}} applies to most closed 3-manifolds, including \emph{all} closed aspherical 3-manifolds, and establishes an isosystolic constant $C_3=6$ for them. Let us write $V_0:=S^3\nts$, $V_k:=\csum_k(S^2 \times S^1\ntts)$ and $V_{-k}:=\csum_k(S^2 \rtimes S^1\ntts)$ for $k>0$, where $\csum_k$ denotes the $k$-times repeated connected sum and $S^2 \rtimes S^1$ denotes the non-orientable $S^2$-bundle over $S^1$. Also, for coprime positive integers $p > q$, $L(p,q)$ denotes the lens space obtained as a quotient of $S^3 \subset \bbC^2$ by the $\bbZp$-action generated by $(z_1,z_2) \mapsto \big(e^{2\pi i q/p}z_1,e^{2\pi i q/p}z_2 \big)$; the integer $p$ is referred to as the \emph{order} of $L(p,q)$. Our main result for closed 3-manifolds is the following.

\begin{thm-Ess} \label{thm:Ess-restated}
Let $M$ be a closed 3-manifold. Suppose that $M$ is not a connected sum $L(p_1,q_1)\csum \cdots \csum L(p_m,q_m) \csum V_k$ with odd orders $p_1, \cdots, p_m$ and an integer $k$. Then, for any riemannian metric $g$ on $M$,
\begin{align*}
\Sys(M,g)^3 &\leq 6 \, \Vol(M,g).
\end{align*}
\end{thm-Ess}

\hyperref[thm:Ess-restated]{Theorem~\ref*{thm:Ess}} relies heavily and essentially on some of the deepest results in the 3-manifold topology in the last decade. We utilize the resolution of \emph{Geometrization Conjecture} by Perelman \cite{Perelman1}, \cite{Perelman2}, \cite{Perelman3}, which builds on the work of Hamilton \cite{Hamilton} and others. Moreover, we also utilize the resolution of \emph{Virtual Fibering Conjecture} for closed hyperbolic 3-manifolds by Agol \cite{Agol:VHC} and for non-positively curved manifolds by Przytycki and Wise \cite{Przytycki-Wise}.

We treat geometric 3-manifolds in \hyperref[sec:Geom3]{\S\ref*{sec:Geom3}}, aspherical 3-manifolds in \hyperref[sec:Asph3]{\S\ref*{sec:Asph3}}, and essential 3-manifolds in \hyperref[sec:Ess3]{\S\ref*{sec:Ess3}}. In many cases, \hyperref[thm:Closed-restated]{Theorem~\ref*{thm:Closed}} is sufficient to establish our isosystolic inequality, and this can be seen by direct topological arguments, possibly with a help of one of \emph{Virtual Fibering Theorems}. However, it should be noted that \hyperref[thm:Open-restated]{Theorem~\ref*{thm:Open}} turns out to be crucial for some closed graph manifolds, since we need to consider their infinite-degree \emph{non-compact} covers.

\subsection{Conventions}

We work in the category of smooth manifolds. As usual, a manifold is said to be \emph{closed} if it is compact and with empty boundary. When working with $n$-manifolds, the $n$-dimensional volume of $n$-chains and $n$-manifolds will be denoted by $\Vol$, and the $(n-1)$-dimensional volume of $(n-1)$-chains and hypersurfaces will be denoted by $\Area$. The open $r$-neighborhood and the closed $r$-neighborhood of a point $z$ in an $n$-manifold will be denoted by $B(z,r)$ and $\bar B(z,r)$ respectively; when $z$ is on an $(n-1)$-dimensional hypersurface $Z$ in an $n$-manifold $M$, the $(n-1)$-dimensional $r$-neighborhood on $Z$ with respect to the induced metric on $Z$ will be denoted by $D(z,r)$, while the notation $B(z,r)$ will be reserved for $n$-dimensional $r$-neighborhood in $M$. The length of curves and 1-chains will be denoted by $\Length$.

\subsection{Acknowledgement}

The author would like to thank Misha Katz for pointing out a mistake in the earlier draft, Yi Liu for instrumental comments on the recent results on non-positively curved 3-manifolds, and Joel Hass for helpful communications on minimal hypersurfaces. The author would also like to thank Larry Guth and Chris Croke for insightful discussions on systolic geometry.

\section{Closed Manifolds} \label{sec:Closed}

In this section, we prove \hyperref[thm:Closed]{Theorem~\ref*{thm:Closed}}, which establishes an isosystolic inequality for closed manifolds under a certain cohomological hypothesis. Following Guth's work \cite{Guth:Hypersurfaces}, we utilize nearly minimal hypersurfaces and show the existence of metric balls with large volume and small radius; as Guth noted, this approach can be loosely regarded as an adaptation of the proof of Geroch Conjecture for $n \leq 7$ by Schoen and Yau \cite{Schoen-Yau:Positive}. We then establish our isosystolic inequality with an observation, as in \cite{KKSSW}, that the volume estimate for these metric balls behaves well under covering maps. Throughout this section, we work strictly in the context of closed manifolds. We write $H_*(M;\bbZtwo)$ and $H^*(M;\bbZtwo)$ for singular homology and cohomology.

\subsection{Hypersurfaces} \label{ssec:Closed-Hypersurf}

Let $M$ be a closed smooth $n$-manifold. Recall first that, although a homology class cannot be represented by a smooth embedded submanifold in general, any codimension-one homology class $\bsZ \in H_{n-1}(M;\bbZtwo)$ can indeed be represented by a smooth embedded hypersurface \cite{Thom:VD}. This follows from Poincar\'e duality and the identification of $H^1(M;\bbZtwo)$ with the set of homotopy classes $[M,\RP^\infty]$ of maps from $M$ into $\RP^\infty \simeq K(\bbZtwo,1)$; see \hyperref[ssec:Open-Smooth]{\S\ref*{ssec:Open-Smooth}} for some detail.

Now, suppose that $(M,g)$ is a closed riemannian $n$-manifold. Given a homology class $\bsZ \in H_{n-1}(M;\bbZtwo)$, we would like to take a smooth embedded hypersurface representative $Z$ of $\bsZ$ whose geometry is well-controlled locally. Ideally, we would like to take $Z$ to be \emph{area-minimizing} in $\bsZ$, i.e. having the area that realizes the infimum of the area of all smooth embedded hypersurfaces in $\bsZ$; however, this is not possible in general. Nonetheless, we can take $Z$ to be \emph{nearly area-minimizing}, i.e. having the area that is arbitrarily close to the infimum. For $\delta \geq 0$, we say a smooth embedded hypersurface $Z$ is \emph{$\delta$-minimizing} in its homology class $\bsZ$ if $\Area(Z') \geq \Area(Z)-\delta$ for any other smooth embedded hypersurface $Z'$ representing $\bsZ$. By definition, for any $\delta>0$, we can always take a $\delta$-minimizing hypersurface $Z$ representing its homology class $\bsZ$.

\begin{rem}
For $\bbZ$-homology, the compactness theorem in geometric measure theory guarantees the existence of an area-minimizing \emph{integral current} in the homology class $\bsZ \in H_{n-1}(M;\bbZ)$ \cite{FF}, \cite[\S4.2, \S4.4]{Federer:GMT}; if $n \leq 7$, the regularity theorem then guarantees the area-minimizer to be a smooth embedded hypersurface \cite{Federer:Reg+Z2}, \cite[\S5.4]{Federer:GMT}. For $\bbZtwo$-homology, a version of the compactness theorem for \emph{flat $\bbZtwo$-chains} guarantees the existence of an area-minimizing flat $\bbZtwo$-chain in the homology class $\bsZ \in H_{n-1}(M;\bbZtwo)$ \cite{Ziemer}, \cite{Fleming:Zn}, \cite[\S4.2, \S4.4]{Federer:GMT}; if $n \leq 7$, a version of the regularity theorem, e.g. \cite{Federer:Reg+Z2} together with \cite[Lem.\,4.2]{Morgan:Z2}, then guarantees the area-minimizer to be a smooth embedded hypersurface.
\end{rem}

\subsection{Area Comparison} \label{ssec:Closed-ACL}

The key observation in \cite{Guth:Hypersurfaces} is that the area of a metric sphere can be controlled near $\delta$-minimizing hypersurfaces. Let us first recall Gromov's Curve Factoring Lemma \cite{Gromov:Metric} in a metric ball $\bar B(x,r)$. The original version by Gromov gives a factorization of a loop in $\bar B(x,r)$ as a product of short loops. The following version by Guth \cite{Guth:Hypersurfaces} gives an analogous factorization of a 1-cycle in $\bar B(x,r)$ as a sum of short 1-cycles.

\begin{lemalpha}[Curve Factoring Lemma] \label{lem:CFL}
Let $(M,g)$ be a complete riemannian $n$-manifold. Fix $r>0$ and $x \in M$, and let $\gamma$ be a 1-cycle in $\bar B(x,r)$. Then, for any $\varepsilon>0$, there exist 1-cycles $\gamma_1, \cdots, \gamma_k \subset \bar B(x,r)$ with $\Length(\gamma_i) \leq 2r+\varepsilon$ for all $i$, such that $\gamma$ is homologous to $\gamma_1 + \cdots + \gamma_k$.
\end{lemalpha}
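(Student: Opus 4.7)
The plan is to realize $\gamma$ as a singular chain, subdivide it into short pieces, and cone each piece off to the center $x$, producing small triangular 1-cycles whose sum recovers $\gamma$. First I would write $\gamma = \sum_{i=1}^N n_i \ts e_i$ as a singular 1-cycle supported in $\bar B(x,r)$, with smooth 1-simplices $e_i$; after a barycentric subdivision, if necessary, I may assume $\Length(e_i) < \varepsilon$ for each $i$. Denote the initial and terminal vertices of $e_i$ by $v_i^-$ and $v_i^+$; both lie in $\bar B(x,r)$. For each vertex $v$ occurring in the subdivision, fix a minimizing geodesic segment $\sigma_v$ from $v$ to $x$. Completeness of $(M,g)$ and Hopf--Rinow guarantee such a $\sigma_v$ of length $d(v,x) \leq r$, and crucially every point along $\sigma_v$ lies in $\bar B(x,r)$, since distance from $x$ decreases monotonically along a minimizing geodesic pointing toward $x$.

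Next, for each 1-simplex $e_i$ I form the triangular 1-chain
\[
\tau_i \ts := \ts e_i + \sigma_{v_i^+} - \sigma_{v_i^-}.
\]
A direct boundary computation
\[
\partial \tau_i = (v_i^+ - v_i^-) + (x - v_i^+) - (x - v_i^-) = 0
\]
shows that each $\tau_i$ is a 1-cycle; the triangle inequality gives $\Length(\tau_i) \leq \varepsilon + 2r$, and by the containment observation above, $\tau_i$ is supported in $\bar B(x,r)$. It remains to check that $\gamma$ equals $\sum_i n_i \tau_i$ on the nose as a singular chain: the extra contribution from the coning geodesics is $\sum_v c_v \ts \sigma_v$, where $c_v$ is the net signed multiplicity of $v$ as an endpoint in $\sum_i n_i e_i$, i.e. the coefficient of $v$ in $\partial \gamma$. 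Since $\gamma$ is a cycle, $\partial \gamma = 0$, so every $c_v$ vanishes and the coning contributions cancel exactly. Enumerating the $\tau_i$ (with multiplicities, if working over $\bbZ$) as $\gamma_1, \dots, \gamma_k$ completes the proof; over $\bbZtwo$ the argument is identical with all signs suppressed.

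The construction is otherwise routine, and the only genuine geometric input is the containment of each $\sigma_v$---and hence of each triangular cycle $\tau_i$---inside $\bar B(x,r)$. This is precisely where completeness of $g$ enters (via Hopf--Rinow); without it the $\tau_i$ would only be known to lie in a slightly enlarged ball, which would weaken any downstream area or volume estimate. Once this point is settled, the rest of the argument is a purely formal chain-level identity.
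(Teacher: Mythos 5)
The paper does not prove the Curve Factoring Lemma itself; it is stated in the \texttt{lemalpha} environment reserved for attributed results, with the loop version due to Gromov \cite{Gromov:Metric} and the 1-cycle version to Guth \cite{Guth:Hypersurfaces}. Your cone-off argument is the standard proof of this lemma and is correct: subdivide $\gamma$ into edges of metric length $<\varepsilon$, cone each edge to $x$ along minimizing geodesics (which remain inside $\bar B(x,r)$ because distance to $x$ decreases monotonically along a minimizing geodesic terminating at $x$), and use $\partial\gamma=0$ to see that the coning segments cancel exactly. You in fact obtain the stronger conclusion that $\gamma$ equals the sum $\sum_i n_i\tau_i$ on the nose as a chain, not merely up to homology, and you correctly flag completeness via Hopf--Rinow as the one genuine geometric input. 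The only point worth making explicit is that you tacitly take $\gamma$ to be represented by rectifiable (smooth) $1$-simplices in $\bar B(x,r)$; for a general singular $1$-cycle this requires a preliminary smoothing step, but in the paper's applications (Lemmas~\ref{lem:ACL-C} and~\ref{lem:ACL-K}) the cycle arises as a piecewise-smooth intersection cycle, so the reduction is harmless.
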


Let $(M,g)$ be a closed riemannian $n$-manifold. For a non-zero homology class $\bsgamma \in H_1(M;\bbZtwo)$, we set $\Length(\bsgamma)$ to be the infimum of $\Length(\gamma)$ over all 1-cycles $\gamma$ representing $\bsgamma$. For a non-zero cohomology class $\bszeta \in H^1(M;\bbZtwo)$, we define
\[
\Length(\bszeta):=\inf \{ \ts \Length(\bsgamma) \mid \bsgamma \in H_1(M;\bbZtwo), \langle \bszeta,\bsgamma \rangle \neq 0 \ts \}.
\]
Equivalently, for any embedded hypersurface $Z$ representing the Poincar\'e dual $\bsZ \in H_{n-1}(M;\bbZtwo)$ of $\bszeta$, $\Length(\bszeta)$ is the infimum of $\Length(\gamma)$ over all 1-cycles $\gamma$ with a non-zero algebraic intersection number against $Z$. We have $\Length(\bszeta) \geq \Sys(M,g)>0$ for any non-zero class $\bszeta \in H^1(M;\bbZtwo)$.

We now give an estimate for the area of a metric sphere centered at a point on a $\delta$-minimizing hypersurface $Z$, in terms of the area of the disk on $Z$ with the same center and the radius. Our lemma improves a similar estimate for closed manifolds in the proof of Stability Lemma in \cite{Guth:Hypersurfaces}.

\begin{lem}[Area Comparison Lemma] \label{lem:ACL-C}
Let $(M,g)$ be a closed riemannian $n$-manifold. Let $\bszeta \in H^1(M;\bbZtwo)$ be a non-zero class with $\rho:=\Length(\bszeta)/2>0$, and let $\bsZ \in H_{n-1}(M;\bbZtwo)$ be the Poincar\'e dual of $\bszeta$. Let $\delta >0$, and let $Z \subset M$ be a smooth embedded hypersurface, $\delta$-minimizing in its homology class $\bsZ$. Then, for any $z \in Z$ and any $r \in (0,\rho)$,
\begin{align} \label{eqn:ACL-C}
\Area \big(\partial \bar B(z,r)\big) \geq 2 \, \Area \big(Z \cap \bar B(z, r) \big) -2\delta \geq 2 \, \Area \big(D(z, r) \big) -2\delta.
\end{align}
\end{lem}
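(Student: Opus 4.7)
The plan is a cut-and-paste surgery. I would replace $Z \cap \bar B(z,r)$ with each of two complementary subsets of the metric sphere $S := \partial \bar B(z,r)$, then use the $\delta$-minimality of $Z$ twice to bound $\Area(Z \cap \bar B(z,r))$ by the area of each piece of $S$. For the geometry of this surgery to make sense, I would first replace $r$ by a nearby regular value of the distance function from $z$ (permissible by Sard's theorem) so that $Z$ meets $S$ transversally along a smooth $(n-2)$-submanifold, and take a limit at the end.

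The heart of the argument uses the hypothesis $r < \rho$ to control the topology of $\bar B(z,r)$ relative to $\bszeta$. By the Curve Factoring Lemma, any 1-cycle $\gamma \subset \bar B(z,r)$ decomposes, for every $\varepsilon > 0$, as a sum of 1-cycles of length at most $2r + \varepsilon$; choosing $\varepsilon$ with $2r + \varepsilon < \Length(\bszeta)$, each summand pairs trivially with $\bszeta$, so does $\gamma$, and by Poincar\'e duality every 1-cycle in $\bar B(z,r)$ has zero mod-$2$ intersection with $Z$. Hence the mod-$2$ intersection parity along paths from $z$ defines a locally constant function $f\colon \bar B(z,r) \ssminus Z \to \bbZtwo$, giving a decomposition $\bar B(z,r) = B_0 \cup B_1$ whose common boundary away from $S$ is $Z \cap \bar B(z,r)$, and a corresponding partition $S = \Sigma_0 \cup \Sigma_1$ via $\Sigma_i := S \cap B_i$. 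As mod-$2$ chains, $\partial B_i = (Z \cap \bar B(z,r)) + \Sigma_i$ is null-homologous, so the surgered hypersurface $Z'_i := (Z \ssminus \bar B(z,r)) \cup \Sigma_i$ still represents $\bsZ \in H_{n-1}(M;\bbZtwo)$.

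After a standard smoothing near the corner $Z \cap S$ with arbitrarily small area cost, the $\delta$-minimality of $Z$ applied to $Z'_0$ and $Z'_1$ yields
\begin{align*}
\Area(Z \cap \bar B(z,r)) \;\leq\; \Area(\Sigma_i) + \delta \qquad (i=0,1).
\end{align*}
Adding the two inequalities and using $\Area(\Sigma_0) + \Area(\Sigma_1) = \Area(S)$ gives the first inequality of \eqref{eqn:ACL-C}. The second inequality is immediate: since the induced metric on $Z$ dominates the restriction of $g$ to $Z$, any $y \in D(z,r)$ satisfies $d_M(z,y) \leq d_Z(z,y) \leq r$, so $D(z,r) \subseteq Z \cap \bar B(z,r)$ and the area bound follows.

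The main obstacle I anticipate is the rigorous justification of the decomposition $\bar B(z,r) = B_0 \cup B_1$ and of the smoothing, not the short algebra of the area estimate. One needs to accommodate the possibility that $Z \cap \bar B(z,r)$ and the $B_i$ are disconnected, and to smooth $\Sigma_i \cup (Z \ssminus \bar B(z,r))$ at its corner $Z \cap S$ while controlling the extra area by $\eta \to 0$. In a flat-chain framework both issues collapse to a short computation at the level of $H_{n-1}$, and I would expect the author to invoke exactly this machinery rather than work directly with smooth embedded approximations.
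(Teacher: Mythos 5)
Your argument is correct and follows the paper's proof essentially step by step: the Curve Factoring Lemma shows $Z \cap \bar B(z,r)$ is a null-homologous relative cycle, the ball $\bar B(z,r)$ and its boundary sphere then decompose into two complementary pieces, and a cut-and-cap surgery combined with $\delta$-minimality gives the estimate. The only differences are cosmetic: you apply $\delta$-minimality to \emph{both} surgered hypersurfaces $Z'_0, Z'_1$ and add the two bounds, whereas the paper applies it once to the smaller cap $P \in \{P_0, P_1\}$ and uses $\Area(\partial \bar B(z,r)) \geq 2\Area(P)$; and the paper carries out the corner-smoothing directly at the level of smooth embedded hypersurfaces (``rounding off the corners'' without increasing area) rather than retreating to flat-chain machinery as you anticipated.
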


\begin{rem}
The factor 2 in the inequality \hyperref[eqn:ACL-C]{(\ref*{eqn:ACL-C})} above is optimal for the class of manifolds in concern; see \hyperref[ssec:Ratio]{\S\ref*{ssec:Ratio}} for the related discussion.
\end{rem}

\begin{proof}
We first note that $Z \cap \bar B(z,r)$, regarded as a relative $(n-1)$-cycle in the pair $(\bar B(z, r), \partial \bar B(z,r))$, is null-homologous; we recall the argument from \cite{Guth:Hypersurfaces} for the convenience of readers. Suppose that is not the case. Then, by the Poincar\'e-Lefschetz duality, the cycle $Z \cap \bar B(z,r)$ has a non-zero algebraic intersection number with an absolute 1-cycle $\gamma$ in $\bar B(z, r)$. Since $r<\rho$, \hyperref[lem:CFL]{Lemma~\ref*{lem:CFL}} with small enough $\varepsilon >0$ yields a decomposition $\gamma=\sum_i\gamma_i$ with $\Length(\gamma_i) \leq 2r+\varepsilon < 2\rho = \Length(\bszeta)$ for all $i$. However, some short cycle $\gamma_i$ must have a non-zero intersection number against $Z$; this contradicts the definition of $\Length(\bszeta)$.

Now, the null-homologous relative $(n-1)$-cycle $Z \cap \bar B(z,r)$ bounds a relative $n$-chain $Q_1$ in $(\bar B(z,r),\partial \bar B(z,r))$. Since the cycle $Z \cap \bar B(z,r)$ is embedded and has multiplicity 1, we can take $Q_1$ to be a sum of the closure of some components of $\bar B(z,r) \ssminus Z$. Moreover, since each component of $Z \cap \bar B(z,r)$ is null-homologous, and hence 2-sided and separating, it follows that $Z \cap \bar B(z,r)$ also bounds the complementary $n$-chain $Q_0$ in $(\bar B(z, r), \partial \bar B(z,r))$. Hence, $\bar B(z,r)$ is decomposed along $Z \cap \bar B(z,r)$ into $Q_1$ and $Q_0$.

From the long exact sequence in homology, we see that the $(n-2)$-cycle $Z \cap \partial \bar B(z,r)$ in $\partial \bar B(z,r)$ is a boundary; indeed, this $(n-2)$-cycle bounds $(n-1)$-chains $P_1:=Q_1 \cap \partial \bar B(z,r)$ and $P_0:=Q_0 \cap \partial \bar B(z,r)$ in $\partial \bar B(z,r)$. Hence, $\partial \bar B(z,r)$ is decomposed along $Z \cap \partial \bar B(z,r)$ into $P_1$ and $P_0$. Among $P_1$ and $P_0$, let $P$ be the one whose area is smaller; then, we have
\begin{align} \label{eqn:Factor2-C}
\Area\big(\partial \bar B(z,r)\big) \geq 2 \, \Area(P).
\end{align}

Let $Z'$ be a $(n-1)$-cycle in $M$, formed by cutting out $Z \cap \bar B(z,r)$ from $Z$ and capping it off by $P$; we can modify the cycle $Z'$ by rounding off the corners and obtain a smooth hypersurface $Z''$ without increasing the area. Then, $Z$ is homologous to $Z'$, and hence to $Z''$. Since $Z$ is $\delta$-minimizing in its homology class, we have $\Area(Z') \geq \Area(Z'') \geq \Area(Z)-\delta$.
Since $Z'$ and $Z$ coincide in the complement of the closed ball $\bar B(z,r)$, we can subtract $\Area(Z \ssminus \bar B(z,r))$ from this inequality and obtain
\begin{align} \label{eqn:Minimizing-C}
\Area(P) \geq \Area \big(Z \cap \bar B(z,r)\big)-\delta.
\end{align}

The first inequality in \hyperref[eqn:ACL-C]{(\ref*{eqn:ACL-C})} now follows from \hyperref[eqn:Factor2-C]{(\ref*{eqn:Factor2-C})} and \hyperref[eqn:Minimizing-C]{(\ref*{eqn:Minimizing-C})}. The second inequality in \hyperref[eqn:ACL-C]{(\ref*{eqn:ACL-C})}, or equivalently $\Area(Z \cap \bar B(z,r)) \geq \Area (D(z,r))$, follows from $Z \cap \bar B(z,r) \supset D(z,r)$ and an observation that the distance measured in $M$ is at most the distance measured along $Z$.
\end{proof}

\subsection{Volume of Balls} \label{ssec:Closed-Balls}

We now give a volume estimate for balls with small diameters in a closed riemannian manifold $M$ whose $\bbZtwo$-cohomology has the maximal cup-length. The following estimate improves Guth's main theorem in \cite{Guth:Hypersurfaces} which finds a ball with volume at least $(R/4n)^n$ under the same hypothesis.

\begin{thm} \label{thm:Closed-Balls}
Let $(M,g)$ be a closed riemannian $n$-manifold. Suppose that there exist (not necessarily distinct) cohomology classes $\bszeta_1, \cdots, \bszeta_n$ in $H^1(M;\bbZtwo)$ such that $\bszeta_1 \cup \cdots \cup \bszeta_n \neq \bszero$ in $H^n(M;\bbZtwo)$ and $\rho:=\min_i \Length(\bszeta_i)/2>0$. Then, there exists $z \in M$ such that, for any $R \in (0,\rho)$,
\[
\Vol\big(B(z,R)\big) \geq \frac{(2R)^n}{n!}_.
\]
\end{thm}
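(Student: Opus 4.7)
The plan is to prove \hyperref[thm:Closed-Balls]{Theorem~\ref*{thm:Closed-Balls}} by induction on the dimension $n$, feeding \hyperref[lem:ACL-C]{Lemma~\ref*{lem:ACL-C}} into the coarea formula. The base case $n=1$ is immediate: $M$ is a circle of length $\ell$, the unique non-zero class in $H^1(M;\bbZtwo)$ has $\Length(\bszeta_1)=\ell$ so that $\rho=\ell/2$, and every metric ball of radius $R<\rho$ is an arc of measure exactly $2R=(2R)^1/1!\,$.

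For the inductive step, I would fix $\delta>0$ and choose a smooth embedded hypersurface $Z\subset M$ that is $\delta$-minimizing in the homology class $\bsZ\in H_{n-1}(M;\bbZtwo)$ Poincar\'e dual to $\bszeta_1$. The aim is to produce a point $w\in Z$ at which the intrinsic disk areas on $Z$ realize the $(n-1)$-dimensional conclusion of the theorem; then \hyperref[lem:ACL-C]{Lemma~\ref*{lem:ACL-C}} together with the coarea formula yield
\[
\Vol\bigl(B(w,R)\bigr)=\int_0^R \Area\bigl(\partial\bar B(w,r)\bigr)\,dr \geq \int_0^R \Bigl(2\cdot\tfrac{(2r)^{n-1}}{(n-1)!}-2\delta\Bigr)\,dr = \frac{(2R)^n}{n!}-2\delta R
\]
for every $R\in(0,\rho)$. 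A sequence $\delta_k\downarrow 0$ produces base points $w_k\in M$ satisfying this bound; by compactness of $M$ they subconverge to some $z\in M$, and continuity of $w\mapsto\Vol(B(w,R))$ (valid since Riemannian metric spheres have measure zero) then passes the inequality to the limit for each fixed $R$.

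To obtain $w$, I would verify the inductive hypotheses on the closed $(n-1)$-manifold $Z$ with the restricted classes $i^*\bszeta_2,\dots,i^*\bszeta_n\in H^1(Z;\bbZtwo)$, where $i:Z\hookrightarrow M$. Poincar\'e duality in $M$ identifies
\[
\langle i^*\bszeta_2\cup\cdots\cup i^*\bszeta_n,\,[Z]\rangle_Z = \langle \bszeta_1\cup\bszeta_2\cup\cdots\cup\bszeta_n,\,[M]\rangle_M \neq 0,
\]
so the restricted cup product is non-zero on at least one connected component $Z^\circ$ of $Z$, to which the induction is applied. For the length hypothesis on $Z^\circ$, I note that any 1-cycle $\gamma$ in $Z^\circ$ has the same length and the same $\bbZtwo$-pairing with each $\bszeta_i$ whether regarded in $Z^\circ$ or in $M$; this forces $\Length_{Z^\circ}(i^*\bszeta_i)\geq \Length_M(\bszeta_i)\geq 2\rho$ and hence $\rho_{Z^\circ}\geq\rho$. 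Induction then supplies $w\in Z^\circ\subseteq Z$ with $\Area(D(w,r))\geq (2r)^{n-1}/(n-1)!$ for $r\in(0,\rho)$, and the disk $D(w,r)$ measured intrinsically in $Z$ coincides with the one measured in $Z^\circ$ since different components of $Z$ lie at infinite intrinsic distance.

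The main obstacle is bookkeeping rather than analysis: one must check that both the cup-length hypothesis and the length hypothesis survive restriction to a single component of the possibly disconnected hypersurface $Z$, and that the error term $2\delta R$ from \hyperref[lem:ACL-C]{Lemma~\ref*{lem:ACL-C}} can be eliminated via a compactness argument as $\delta\to 0$. The iterative slicing by nearly minimizing hypersurfaces is Guth's idea, but combining it with the sharp factor $2$ in \hyperref[lem:ACL-C]{Lemma~\ref*{lem:ACL-C}} and replacing his ball-packing estimate with direct coarea integration is what upgrades the volume lower bound from $(R/4n)^n$ to the stated $(2R)^n/n!\,$.
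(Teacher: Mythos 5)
Your proof is correct and follows essentially the same route as the paper: induction on dimension, slicing by a $\delta$-minimizing hypersurface dual to one of the $\bszeta_i$, restricting the remaining classes, feeding the Area Comparison Lemma into the coarea integral, and removing the $\delta$-error by compactness of $M$ and continuity of $w\mapsto\Vol(B(w,R))$. The only (harmless) difference is that you explicitly restrict attention to a connected component of $Z$; the paper applies the induction hypothesis directly to the possibly disconnected $Z$, which is equally valid since the statement already yields a point $z\in Z$ landing in the relevant component.
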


\begin{proof}
We prove the statement by induction on the dimension $n$. The case $n = 1$ is trivial. As the induction step, for $n \geq 2$, let $\bszeta_1, \cdots, \bszeta_n \in H^1(M;\bbZtwo)$ with $\rho:=\min_i \Length(\bszeta_i)/2>0$ such that $\bszeta_1 \cup \cdots \cup \bszeta_n \neq \bszero$. Let $\delta>0$. Let $\bsZ \in H_{n-1}(M;\bbZtwo)$ be the Poincar\'e dual of $\bszeta_n$, and let $Z$ be a closed hypersurface, $\delta$-minimizing in its non-zero homology class $\bsZ$. We write $\bsM \in H_n(M;\bbZtwo)$ for the fundamental class for $M$. By Poincar\'e duality and the property of the cup product, we have
\[
\langle \bszeta_1 \cup \cdots \cup \bszeta_{n-1}, \bsZ \rangle=\langle \bszeta_1 \cup \cdots \cup \bszeta_{n-1}, \bsM \cap \bszeta_n \rangle=\langle \bszeta_1 \cup \cdots \cup \bszeta_n, \bsM \rangle \neq 0.
\]
Restricting cocycle representatives $\zeta_1, \cdots, \zeta_{n-1}$ of $\bszeta_1, \cdots, \bszeta_{n-1}$ to $Z$, we obtain cohomology classes $\bszeta'_1, \cdots, \bszeta'_{n-1} \in H^1(Z;\bbZtwo)$ such that $\bszeta'_1 \cup \cdots \cup \bszeta'_{n-1} \neq \bszero$ in $H^{n-1}(Z;\bbZtwo)$. Moreover, $\rho':=\min_i \Length(\bszeta'_i)/2 \geq \rho >0$, since $\Length(\bszeta'_i) \geq \Length(\bszeta_i)$ for $1 \leq i \leq n-1$. Hence, by the induction hypothesis, there is a point $z\in Z$ such that, for any $r \in (0,\rho')$,
\begin{align} \label{eqn:Closed-Codim1}
\Area \big(D(z,r)\big) \geq \frac{(2r)^{n-1}}{(n-1)!}_.
\end{align}
Now, let $R \in (0,\rho)$ and take a ball $B(z,R)$ in $M$, centered at $z \in Z$; note that we have $R < \rho'$ and $R < \Length(\bszeta_n)$. Then, together with \hyperref[lem:ACL-C]{Lemma~\ref*{lem:ACL-C}} and the estimate \hyperref[eqn:Closed-Codim1]{(\ref*{eqn:Closed-Codim1})} above, the coarea integration yields
\begin{align}
\begin{aligned} \label{eqn:Closed-Integral}
\Vol \big(B(z,R)\big)
&=\int_0^R \nts \Area \big(\partial \bar B(z,r)\big) \, dr
\geq \int_0^R \nts \left[ \ts 2\Area \big(D(z,r)\big) -2\delta \ts \right] dr\\
&\geq \int_0^R \left[ \ts 2 \ts \frac{(2r)^{n-1}}{(n-1)!}-2\delta \ts \right] dr
= \frac{(2R)^n}{n!}-2R\delta.
\end{aligned}
\end{align}
This estimate can be obtained for any $\delta>0$, but the center $z$ is on the $\delta$-minimizing hypersurface $Z$ which depends on $\delta>0$. Take a sequence of points $z_m$ such that the estimate \hyperref[eqn:Closed-Integral]{(\ref*{eqn:Closed-Integral})} holds for $\delta_m:=1/m$. Since $M$ is compact, we have a convergent subsequence $z_{m_i} \rightarrow z_\infty$. The inequality
\[
\Vol \big(B(z_\infty,R)\big) =\lim_{i \rightarrow \infty} \Vol \big(B(z_{m_i},R)\big) \geq \lim_{\begin{smallmatrix} i \rightarrow \infty \\ m_i \rightarrow 0 \end{smallmatrix}} \frac{(2R)^n}{n!}-\frac{2R}{m_i}=\frac{(2R)^n}{n!}_.
\]
holds, since $x \mapsto \Vol(B(x,R))$ is a continuous function on $M$.
\end{proof}

Given a covering map $p: (\hat M,\hat g) \rightarrow (M,g)$ between riemannian manifolds, metric balls with small enough radius maps isometrically between $\hat M$ and $M$ under a covering map. This observation was used in the work of Katz, Katz, Sabourau, Shnider, and Weinberger \cite{KKSSW} to control an isosystolic inequality of $M$ by that of $\hat M$. Utilizing this observation, we now establish our isosystolic inequality for closed manifolds.

\begin{thm} \label{thm:Closed}
Let $M$ be a closed smooth $n$-manifold, and let $\hat M$ be a finite-degree cover of $M$. Suppose that there exist (not necessarily distinct) cohomology classes $\bszeta_1, \cdots, \bszeta_n$ in $H^1(\hat M;\bbZtwo)$ such that $\bszeta_1 \cup \cdots \cup \bszeta_n \neq \bszero$ in $H^n(\hat M;\bbZtwo)$.
Then, for any riemannian metric $g$ on $M$,
\begin{align*}
\Sys(M,g)^n &\leq n! \, \Vol(M,g).
\end{align*}
\end{thm}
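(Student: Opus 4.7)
The plan is to reduce \hyperref[thm:Closed]{Theorem~\ref*{thm:Closed}} to the ball-volume estimate \hyperref[thm:Closed-Balls]{Theorem~\ref*{thm:Closed-Balls}} on the finite cover $\hat M$, using the standard observation from \cite{KKSSW} that metric balls of sufficiently small radius embed isometrically through a covering map. First, I would pull back the metric $g$ to a metric $\hat g$ on $\hat M$ so that $p:\hat M \to M$ becomes a local isometry. Since $p$ is a covering, $\pi_1(\hat M)$ injects into $\pi_1(M)$, and any non-contractible loop in $(\hat M, \hat g)$ projects to a non-contractible loop in $(M, g)$ of the same length; hence $\Sys(M,g) \leq \Sys(\hat M,\hat g)$. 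Consequently $\Length(\bszeta_i) \geq \Sys(\hat M,\hat g) \geq \Sys(M,g) > 0$ for each $i$, and thus $\rho := \min_i \Length(\bszeta_i)/2 \geq \Sys(M,g)/2 > 0$.

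Next, I would apply \hyperref[thm:Closed-Balls]{Theorem~\ref*{thm:Closed-Balls}} to the closed riemannian manifold $(\hat M, \hat g)$ with the cohomology classes $\bszeta_1, \ldots, \bszeta_n$, producing a point $\hat z \in \hat M$ with
\[
\Vol\big(B(\hat z, R)\big) \geq \frac{(2R)^n}{n!}
\]
for every $R \in (0, \rho)$. Setting $z := p(\hat z)$, the claim is that whenever $2R < \Sys(M,g)$, the projection $p$ restricts to an isometry from $B(\hat z, R)$ onto $B(z, R)$: surjectivity is immediate by lifting minimizing paths from $z$, while injectivity follows because two preimages of a common point in $B(\hat z, R)$ could be joined through $\hat z$ by a path of length less than $2R$, whose projection would be a non-contractible loop in $M$ shorter than $\Sys(M,g)$---impossible.

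Since $\Sys(M,g)/2 \leq \rho$, both the hypothesis $R \in (0,\rho)$ of \hyperref[thm:Closed-Balls]{Theorem~\ref*{thm:Closed-Balls}} and the isometric-ball condition $2R < \Sys(M,g)$ are met simultaneously whenever $R < \Sys(M,g)/2$. For such $R$,
\[
\Vol(M, g) \geq \Vol\big(B(z, R)\big) = \Vol\big(B(\hat z, R)\big) \geq \frac{(2R)^n}{n!},
\]
and letting $R \nearrow \Sys(M,g)/2$ yields the desired bound $\Sys(M,g)^n \leq n!\,\Vol(M,g)$.

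The substantive geometric and cohomological work has already been accomplished in \hyperref[thm:Closed-Balls]{Theorem~\ref*{thm:Closed-Balls}}, so the remaining argument is not technically hard. The conceptual point to watch is that the cup-length classes live only on $\hat M$, so the ball estimate cannot be applied directly to $(M,g)$; the covering argument is precisely what lets the volume estimate on $\hat M$ translate back to a volume lower bound on $M$, and it works because the relevant scale $\Sys(M,g)/2$ is governed by the systole of the base rather than that of the cover.
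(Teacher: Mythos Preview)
Your proposal is correct and follows essentially the same route as the paper's own proof: apply \hyperref[thm:Closed-Balls]{Theorem~\ref*{thm:Closed-Balls}} on $(\hat M,\hat g)$, then use the covering argument (as in \cite{KKSSW}) that a ball of radius $R<\Sys(M,g)/2$ projects isometrically to $M$, and finally let $R \nearrow \Sys(M,g)/2$. The details you supply---the $\pi_1$-injectivity justification for $\Sys(M,g)\le\Sys(\hat M,\hat g)$ and the explicit check of surjectivity of the ball map---are minor elaborations of points the paper leaves implicit, but the structure is identical.
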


\begin{proof}
Since $M$ and its cover $\hat M$ are closed, we have $\min_i \Length(\bszeta_i) \geq \Sys(\hat M,\hat g) \geq \Sys(M,g)$. The cover $\hat M$ satisfies the hypothesis of \hyperref[thm:Closed-Balls]{Theorem~\ref*{thm:Closed-Balls}}. Hence, we can find a point $\hat z \in \hat M$ such that, for any $R \in (0,\Sys(M,g)/2)$,
\[
\Vol\big(B(\hat z,R)\big) \geq \frac{(2R)^n}{n!}_.
\]

Let $p:(\hat M, \hat g) \rightarrow (M,g)$ be the covering map,  and let $z=p(\hat z)$. We observe that $B(\hat z,R) \subset \hat M$ maps injectively onto $B(z,R) \subset M$. Otherwise, there exists a geodesic arc $\hat \gamma \subset B(\hat z,R)$ between two distinct preimages $\hat y', \hat y''$ in $B(\hat z,R)$ of a single point $y \in B(z,R)$, necessarily with $\Length(\hat \gamma)<2R$; then, $\hat \gamma$ projects onto a non-contractible  loop $\gamma \subset B(z,R) \subset M$ with $\Length(\gamma)<2R<\Sys(M,g)$, which is a contradiction. Now, since $B(\hat z,R)$ projects injectively onto $B(z,R)$, these balls are isometric. Hence, we have
\[
\frac{(2R)^n}{n!} \leq \Vol \big(B(\hat z,R)\big) = \Vol \big(B(z,R)\big) \leq \Vol(M,g).
\]
Taking the supremum over $R<\Sys(M,g)/2$ and rearranging the constant, we obtain the desired inequality $\Sys(M,g)^n \leq n! \, \Vol(M,g)$.
\end{proof}

\section{Non-Compact Manifolds} \label{sec:Open}

In this section, we prove \hyperref[thm:Open]{Theorem~\ref*{thm:Open}}, which establishes an isosystolic inequality for possibly non-compact manifolds under a cohomological hypothesis similar to the one in \hyperref[thm:Closed]{Theorem~\ref*{thm:Closed}}. Generally, ``ordinary'' homology and cohomology is often inadequate to study non-compact manifolds, due to the absence of Poincar\'e duality between them. In order to generalize the argument from \hyperref[sec:Closed]{\S\ref*{sec:Closed}} in the context of possibly non-compact manifolds, we employ homology with closed support and cohomology with compact support, in addition to ``ordinary'' homology with compact support and ``ordinary'' cohomology with closed support. Poincar\'e duality and cup-products in this general context serve as our essential tools.

\subsection{Homology and Cohomology} \label{ssec:Open-Homology}

Throughout this section, homology and cohomology will always mean \emph{singular} homology and cohomology. Generally, homology and cohomology can be defined with any paracompactifying family of support for chains and cochains. We note that ``ordinary'' homology $H_*(M;\bbA)$ and cohomology $H^*(M;\bbA)$ in common practice actually refer to homology \emph{with compact support} and cohomology \emph{with closed support}. Since we work with non-compact manifolds, homology \emph{with closed support} and cohomology \emph{with compact support} are instrumental. A presentation of cohomology with compact support can be found in introductory textbooks such as \cite{Hatcher:AT}; a sheaf-theoretic treatment of singular homology and cohomology with a general family of support can be found in \cite{Thom:Sq}, \cite{Bredon}.

Let $M$ be an $n$-manifold. We write $\bbA$ for a coefficient ring; we will later focus on $\bbA=\bbZtwo$. Following \cite{Thom:Sq}, we write $\Fsupp$ for the full family of closed support and $\Ksupp$ for the family of compact support, for chains and cochains in $M$. We write $H_*^\Fsupp(M;\bbA)$ for homology with closed support and $H_*^\Ksupp(M;\bbA)$ for homology with compact support. Similarly, we write $H^*_\Ksupp(M;\bbA)$ for cohomology with compact support and $H^*_\Fsupp(M;\bbA)$ for cohomology with closed support. For closed manifolds, these two families of support coincide, and hence we have $H_*^\Fsupp(M;\bbA)=H_*^\Ksupp(M;\bbA)=H_*(M;\bbA)$ and $H^*_\Ksupp(M;\bbA)=H^*_\Fsupp(M;\bbA)=H^*(M;\bbA)$. We note that, for manifolds with locally finite triangulations, homology with closed support agrees with locally finite simplicial homology.

For the families of support $\Fsupp$ and $\Ksupp$, cochains and chains of mixed support are paired algebraically; cochains with closed support are paired with chains with compact support as usual, and cochains with compact support are paired with chains with closed support. These pairings descend to homology and cohomology:
\begin{align*}
\langle \;\;,\:\: \rangle: H^k_\Fsupp(X;\bbA) \otimes H_k^\Ksupp(X;\bbA) & \rightarrow \bbA, \\
\langle \;\;,\:\: \rangle: H^k_\Ksupp(X;\bbA) \otimes H_k^\Fsupp(X;\bbA) & \rightarrow \bbA.
\end{align*}

The cup products are defined for cochains, with any combination of support; the support of the cup product is the intersection of the support of the factors. With families of support $\Fsupp$ and $\Ksupp$, we have three cup products:
\begin{align*}
\cup&: H^k_\Fsupp(X;\bbA) \otimes H^\ell_\Fsupp(X;\bbA) \rightarrow H^{k+\ell}_\Fsupp(X;\bbA), \\
\cup&: H^k_\Fsupp(X;\bbA) \otimes H^\ell_\Ksupp(X;\bbA) \rightarrow H^{k+\ell}_\Ksupp(X;\bbA), \\
\cup&: H^k_\Ksupp(X;\bbA) \otimes H^\ell_\Ksupp(X;\bbA) \rightarrow H^{k+\ell}_\Ksupp(X;\bbA).
\end{align*}
Similarly, the cap products are defined for chains and cochains, with any combination of support; as in the cup product, the support of the cap product is the intersection of the support of the factors. With families of support $\Fsupp$ and $\Ksupp$, we have four cap products:
\begin{align*}
\cap&: H_{k+\ell}^\Ksupp(M;\bbA) \otimes H^k_\Ksupp(M;\bbA) \rightarrow H_\ell^\Ksupp(M;\bbA), \\
\cap&: H_{k+\ell}^\Ksupp(M;\bbA) \otimes H^k_\Fsupp(M;\bbA) \rightarrow H_\ell^\Ksupp(M;\bbA), \\
\cap&: H_{k+\ell}^\Fsupp(M;\bbA) \otimes H^k_\Ksupp(M;\bbA) \rightarrow H_\ell^\Ksupp(M;\bbA), \\
\cap&: H_{k+\ell}^\Fsupp(M;\bbA) \otimes H^k_\Fsupp(M;\bbA) \rightarrow H_\ell^\Fsupp(M;\bbA).
\end{align*}

The primary advantage of utilizing homology with closed support to study non-compact $n$-manifolds is the existence of the fundamental class $\bsM=[M]$ in the top dimensional homology $H_n^\Fsupp(M;\bbZtwo)$, where the coefficient $\bbZtwo$ can be replaced with $\bbZ$ if $M$ is oreintable. Together with the third and the fourth cap products above, the fundamental class yields two versions of Poincar\'e duality, one for the family $\Ksupp$ of compact support and another for the family $\Fsupp$ of closed support: for any $n$-manifold $M$, we have
\begin{align*}
\PD & : H^k_\Ksupp(M;\bbZtwo) \stackrel{\cong}{\longrightarrow} H_{n-k}^\Ksupp(M;\bbZtwo),\\
\PD & : H^k_\Fsupp(M;\bbZtwo) \stackrel{\cong}{\longrightarrow} H_{n-k}^\Fsupp(M;\bbZtwo),
\end{align*}
where the coefficient $\bbZtwo$ can be replaced with $\bbZ$ if $M$ is orientable. Both duality maps are given by $\bszeta \mapsto \bsM \cap \bszeta$, with respective types of cap products.

\subsection{Smooth Hypersurfaces} \label{ssec:Open-Smooth}

Suppose that $M$ is a smooth $n$-manifold which is possibly non-compact. Just as for closed manifolds, it is not possible in general to represent a homology class, with compact support or with closed support, by a smooth embedded submanifold. However, it turns out that any codimension-one homology class, with compact support or with closed support, can be represented by a smooth embedded hypersurface.

Let us first consider a homology class $\bsZ \in H_{n-1}^\Fsupp(M;\bbZtwo)$ with closed support. Even if $M$ is non-compact, a smooth hypersurface representative $Z$ of $\bsZ$ exists by essentially the same argument as in the close manifold case \cite{Thom:VD}. We briefly review the argument. Let $\bszeta \in H^1_\Fsupp(M;\bbZtwo)$ be the Poincar\'e dual of $\bsZ \in H_{n-1}^\Fsupp(M;\bbZtwo)$. Through the bijection between $H^1_\Fsupp(M;\bbZtwo)$ and $[M,\RP^\infty]$ arising from $\RP^\infty \simeq K(\bbZtwo,1)$, the class $\bszeta$ corresponds to a homotopy class $[f] \in [M,\RP^\infty]$, so that $\bszeta$ is the pull-back $f^*(\bsw_1)$ of the unique non-zero class $\bsw_1 \in H^1_\Fsupp(\RP^\infty;\bbZtwo)$. By cellular approximation and Whitney smooth approximation, we may take the representative $f$ to be a composition $f=i \circ h$, where $h: M \rightarrow \RP^n$ is a smooth map and $i: \RP^n \hookrightarrow \RP^\infty$ is the standard inclusion. Since $\RP^{n-1} \subset \RP^n$ represents the Poincar\'e dual of $i^*(\bsw_1)$, the preimage of $\RP^{n-1} \subset \RP^n$ under $h$ represents the Poincar\'e dual of $\bszeta=f^*(\bsw_1)=h^*i^*(\bsw_1)$; hence, we may take $Z$ to be the preimage $h^{-1}(\RP^{n-1})$ of some copy of $\RP^{n-1} \subset \RP^n$, transverse to the map $h$. It is worthwhile to note that, if $M$ is non-compact, the hypersurface $Z$ may or may not be a closed hypersurface, depending on the class $\bsZ$.

Let us now consider a homology class $\bsZ \in H_{n-1}^\Ksupp(M;\bbZtwo)$ with compact support. For a non-compact manifold $M$, the argument above does not carry over directly to compactly supported cohomology classes, since we cannot make use of the Eilenberg-MacLane spectrum in the same manner; more specifically, $H^1_\Ksupp(M;\bbZtwo)$ is not in bijection with $[M,\RP^\infty]$. However, it is still possible to represent a non-zero homology class $\bsZ$ by a smooth embedded hypersurface $Z$.

\begin{lem} \label{lem:Codim1Rep}
For any smooth $n$-manifold $M$, a homology class $\bsZ \in H_{n-1}^\Ksupp(M;\bbZtwo)$ can be represented by a smooth embedded closed hypersurface $Z$.
\end{lem}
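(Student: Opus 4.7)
My plan is to reduce to the case of a compact manifold with boundary and apply a relative version of Thom's representation argument. First, since every singular chain has compact support, we have $H_{n-1}^\Ksupp(M;\bbZtwo)=H_{n-1}(M;\bbZtwo)$, so I can choose a cycle representative $c$ of $\bsZ$ whose support $K$ is a compact subset of $M$. Next, I would pick a compact codimension-zero smooth submanifold $N \subset M$ with smooth boundary $\partial N$ satisfying $K \subset \interior N$, using a smooth proper exhaustion function on $M$. Then $c$ defines a lift $\bsZ' \in H_{n-1}(N;\bbZtwo)$ that maps to $\bsZ$ under the inclusion-induced homomorphism $H_{n-1}(N;\bbZtwo) \to H_{n-1}(M;\bbZtwo)$.

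The second step is to represent $\bsZ'$ by a smooth embedded closed hypersurface of $N$ lying in $\interior N$. Poincar\'e-Lefschetz duality for the compact manifold $N$ with boundary yields an isomorphism $H_{n-1}(N;\bbZtwo) \cong H^1(N,\partial N;\bbZtwo)$, sending $\bsZ'$ to a class $\bszeta'$. Because $\RP^\infty \simeq K(\bbZtwo,1)$, and because $H^1(N,\partial N;\bbZtwo)$ is naturally in bijection with the set of based homotopy classes $[(N/\partial N,*),(\RP^\infty,*)]$, I can realize $\bszeta'$ as $f^*(\bsw_1)$ for some based map $f:(N,\partial N) \to (\RP^\infty,*)$. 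Then, via cellular approximation, Whitney smooth approximation, and Thom's transversality theorem, all carried out relative to $\partial N$, I would homotope $f$ to a composition $i \circ h$, where $h:(N,\partial N) \to (\RP^n,*)$ is smooth and transverse to a chosen smooth hyperplane $\RP^{n-1} \subset \RP^n$ with $* \notin \RP^{n-1}$, and $i:\RP^n \hookrightarrow \RP^\infty$ is the standard inclusion.

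Setting $Z := h^{-1}(\RP^{n-1})$, transversality guarantees that $Z$ is a smooth embedded hypersurface in $N$, and the condition $h(\partial N) = * \notin \RP^{n-1}$ forces $Z \cap \partial N = \nil$, so $Z$ lies in $\interior N \subset M$. Being a closed subset of the compact manifold $N$ with empty boundary, $Z$ is a smooth embedded closed hypersurface of $M$. Since $\RP^{n-1} \subset \RP^n$ Poincar\'e-dually represents $i^*(\bsw_1)$, naturality of Poincar\'e-Lefschetz duality under the smooth transverse map $h$ (of pairs, with $h(\partial N)$ disjoint from $\RP^{n-1}$) gives $[Z] = \bsZ'$ in $H_{n-1}(N;\bbZtwo)$, and therefore $[Z]$ is sent to $\bsZ$ in $H_{n-1}^\Ksupp(M;\bbZtwo)$ by inclusion, as required.

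The step I expect to require the most care is executing the Thom representation argument in the relative setting. The subtlety is that the Poincar\'e-Lefschetz dual lives naturally in the relative group $H^1(N,\partial N;\bbZtwo)$ rather than in $H^1(N;\bbZtwo)$, so the Eilenberg-MacLane correspondence must be invoked for the pair $(N,\partial N)$, and all the cellular, smooth, and transverse perturbations must be performed rel $\partial N$ so as to preserve the basepoint condition $h(\partial N)=*$. It is precisely this basepoint condition that forces $Z$ to be disjoint from $\partial N$ and hence closed inside $M$, rather than merely being a closed subset of $N$ that could a priori meet $\partial N$.
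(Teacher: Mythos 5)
Your proof is correct and follows essentially the same route as the paper's: replace $M$ by a compact codimension-zero submanifold with boundary containing a representative of the class, pass to relative $H^1$ via Poincar\'e--Lefschetz duality, and run the Thom representation argument with a basepoint/boundary condition that forces the preimage hypersurface to avoid $\partial N$ and hence be closed. The paper phrases this through the Poincar\'e dual $\bszeta\in H^1_\Ksupp(M;\bbZtwo)$, the direct-limit characterization $H^1_\Ksupp(M;\bbZtwo)=\varinjlim H^1_\Fsupp(M,M\ssminus K;\bbZtwo)$, and the quotient $M/\overline{M\ssminus K}\cong K/\partial K$, whereas you work homologically from a compactly supported cycle and the pair $(N,\partial N)$ directly; these are cosmetic variants of the same construction, and your explicit choice of $\RP^{n-1}$ missing the basepoint is a cleaner way of saying what the paper means by taking $\RP^{n-1}\subset f_K\circ q(\interior K)$.
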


\begin{proof}
It suffices to consider a non-compact manifold $M$. Let $\bszeta \in H^1_\Ksupp(M;\bbZtwo)$ be the Poincar\'e dual of $\bsZ \in H_{n-1}^\Ksupp(M;\bbZtwo)$. Recall that we have $H^*_\Ksupp(M;\bbZtwo)=\varinjlim H^*_\Fsupp(M,M \ssminus K;\bbZtwo)$ where the direct limit is taken over the directed family of compact sets $K \subset M$; see, for example, \cite[\S3.3]{Hatcher:AT}. In particular, there exists a compact set $K$ and $\bszeta' \in H^1_\Fsupp(M,M \ssminus K;\bbZtwo)$ such that $\bszeta$ is the image of $\bszeta'$ in the directed limit; we may assume that $K$ is a compact $n$-submanifold of $M$ with non-empty boundary. There is also an isomorphism $H^1_\Fsupp(M,M \ssminus K;\bbZtwo) \cong H^1_\Fsupp(M/\overline{M \ssminus K};\bbZtwo)$ arising from excision; let $\bszeta_K \in H^1_\Fsupp(M/\overline{M \ssminus K};\bbZtwo)$ be the image of $\bszeta'$ under this isomorphism. 

Let $q: M \rightarrow M/\overline{M \ssminus K}$ be the quotient map. Now, $\bszeta_K$ corresponds to some homotopy class $[f_K] \in [M/\overline{M \ssminus K}, \RP^\infty]$, so that $\bszeta_K=f_K^*(\bsw_1)$ for the unique non-zero class $\bsw_1 \in H^1_\Fsupp(\RP^\infty;\bbZtwo)$. Modifying $f_K$ if necessary, we may assume that $f_K$ maps to $\RP^n \subset \RP^\infty$ and is smooth on $\interior K$; then, the preimage under $f_K$ of some copy  $\RP^{n-1} \subset f_K \circ q (\interior K)$ is a smooth embedded closed hypersurface $Z_K$ representing the Poincar\'e dual of $\bszeta_K$. It follows that the preimage $Z=q^{-1}(Z_K) \subset \interior K$ is a smooth embedded closed hypersurface representing the Poincar\'e-Lefschetz dual of $\bszeta'$, and hence representing the Poincar\'e dual $\bsZ$ of $\bszeta$.
\end{proof}

\begin{rem}
Due to the quotient map $q: M \rightarrow \overline{M \ssminus K}$, the representative $f:=f_K \circ q: M \rightarrow \RP^\infty$ is constant outside of a suitably chosen compact set $K$. The choice of the compact set $K$ may depend on the homology class $\bsZ$.
\end{rem}

\subsection{Nearly Minimizing Hypersurfaces} \label{ssec:Open-Minimizing}

Suppose now that $(M,g)$ is a complete riemannian $n$-manifold which is possibly non-compact. Given a codimension-one homology class, we shall consider a smooth hypersurface representative whose geometry is well-controlled locally, as in the compact case. Such hypersurfaces can be defined for the homology class with compact support or with closed support.

We first consider nearly area-minimizing hypersurface in a homology class $\bsZ \in H_{n-1}^\Ksupp(M;\bbZtwo)$ with compact support, which will be utilized in the proof of \hyperref[thm:Open]{Theorem~\ref*{thm:Open}}. A smooth hypersurface representative of $\bsZ$ is necessarily compact and hence with finite area. For $\delta \geq 0$, we say a smooth hypersurface $Z$ is \emph{$\delta$-minimizing} in its homology class $\bsZ$ if $\Area(Z') \geq \Area(Z)-\delta$ for any other smooth embedded hypersurface $Z'$ representing $\bsZ$. By definition, for any $\delta>0$, we can always take a $\delta$-minimizing hypersurface $Z$ representing its homology class $\bsZ$.

\begin{rem}
The compactness theorem again guarantees that an area-minimizing sequence of flat $\bbZtwo$-chains must have a subsequence whose limit is again a flat $\bbZtwo$-chain. Of course, if $n \geq 8$, the area-minimizer for the sequence of smooth hypersurfaces may fail to be a smooth hypersurface, just as in closed manifolds. It should also be pointed out that, if $M$ is a non-compact manifold, the area-minimizing limit of a sequence of flat chains in a homology class $\bsZ \in H_{n-1}^\Ksupp(M;\bbZtwo)$ may fail to remain in $\bsZ$; in particular, even in low dimensions where the regularity of the limit is guaranteed, an area-minimizing smooth hypersurface may fail to exists in a homology class $\bsZ$. 

We note two situations where non-homologous limit can be witnessed. First, an area-minimizer may be non-compact even if the sequence consists of compactly supported chains; see \cite{Morrey:Plateau}, \cite[Ex.\,6.2]{Hass-Scott:LeastArea}. In such situations, the limit is not compactly supported, and hence it does not belong to the compactly supported homology class $\bsZ$. Second, we could have an area-minimizing sequence with some components escaping into the ends of the manifold. For example, in a non-compact finite-volume hyperbolic manifold, we can take a sequence of frontiers of a cusp, escaping into the cusp with the area decreasing to zero. One can also modify the metric on $M$ in this example so that the area of the frontiers decrease towards some positive infimum; see \cite[Ex.\,6.1]{Hass-Scott:LeastArea}. If $M$ has at least two ends, these frontiers represent a non-zero homology class in $H_{n-1}^\Ksupp(M;\bbZtwo)$. However, the limit of these frontiers escaping into a cusp is the empty $\bbZtwo$-chain, and hence is null-homologous.

It should be emphasized that none of these delicate matters becomes an issue in our work, since we will work with \emph{compact} hypersurfaces that are nearly area-minimizing hypersurfaces in its homology class.
\end{rem}

Although it is not necessary in the proof of \hyperref[thm:Open]{Theorem~\ref*{thm:Open}}, let us briefly discuss a nearly area-minimizing hypersurface in a homology class $\bsZ \in H_{n-1}^\Fsupp(M;\bbZtwo)$ with closed support. A smooth hypersurface representative of $\bsZ$ may be non-compact, and hence it is possible that every representative has infinite area. We shall control the geometry of such hypersurfaces by a local condition. For $\rho>0$ and $\delta>0$, we say $Z$ is \emph{$(\rho,\delta)$-minimizing} in its homology class $\bsZ$ if the following condition is met: for any $x \in M$, if $Z'$ is another smooth embedded hypersurface representing the same class $\bsZ$ such that $Z$ and $Z'$ agree outside of $B(x,\rho)$, then $\Area(Z' \cap B(x,\rho)) \geq \Area(Z \cap B(x,\rho))-\delta$. For any $\rho>0$ and $\delta>0$, we can take a $(\rho,\delta)$-minimizing hypersurface in its homology class $\bsZ$.

\begin{rem}
In a non-compact complete manifold $M$, a smooth embedded hypersurface is said to be \emph{locally area-minimizing} in its homology class if it is a $(\infty,0)$-minimizing hypersurface, i.e. a hypersurface that are $(\rho,0)$-minimizing for any $\rho>0$. A version of the compactness theorem for non-compact flat $\bbZtwo$-chains is known \cite[\S27.3, \S31.2]{Simon:GMT}, and it guarantees that a locally area-minimizing sequence of flat $\bbZtwo$-chains must have a subsequence whose limit is again a flat $\bbZtwo$-chain. Of course, for the sequence of smooth hypersurfaces in a homology class $\bsZ \in H_{n-1}^\Fsupp(M;\bbZtwo)$, the area-minimizer may fail to be a smooth hypersurface if $n \geq 8$. However, this appears to be the only obstruction to the existence of smooth area-minimizing hypersufaces; in other words, it may be the case that the area-minimizer always remain in $\bsZ$. We are not sure if this aspect is thoroughly documented in the literature, although some results may be known, possibly as a folklore.
\end{rem}

\subsection{Area Comparison} \label{ssec:Open-ACL}

Let $(M,g)$ be a complete riemannian $n$-manifold. We give versions of \hyperref[lem:ACL-C]{Lemma~\ref*{lem:ACL-C}} for non-compact manifolds, in terms of homology with compact support and homology with closed support. The compactly supported case, in particular, will play an essential role in deriving our isosystolic inequality.

Let us first assure the validity of the version of \hyperref[lem:ACL-C]{Lemma~\ref*{lem:ACL-C}} for homology and cohomology with compact support, which will be utilized in the proof of \hyperref[thm:Open]{Theorem~\ref*{thm:Open}}. For a non-zero homology class $\bszeta \in H^1_\Ksupp(M;\bbZtwo)$, we define
\[
\Length(\bszeta):=\inf \{ \ts \Length(\bsgamma) \mid \bsgamma \in H_1^\Fsupp(M;\bbZtwo), \langle \bszeta,\bsgamma \rangle \neq 0 \ts \}
\]
where $\Length(\bsgamma)$ for $\bsgamma \in H_1^\Fsupp(M;\bbZtwo)$ is defined to be the infimum of $\Length(\gamma)$ over all (possibly non-compact) 1-cycles $\gamma$ representing $\bsgamma$. It should be understood that $\Length(\bsgamma)=\infty$ if $\bsgamma$ can only be represented by a non-compact cycle; hence, it is possible to have $\Length(\bszeta)=\infty$ if $M$ is non-compact. We note that $\Length(\bszeta) \geq \Sys(M,g)$ for any non-zero class $\bszeta \in H^1_\Ksupp(M;\bbZtwo)$, but it is possible to have $\Length(\bszeta)=\Sys(M,g)=0$ if $M$ is non-compact; in the following lemma, we require that $\Length(\bszeta)>0$.

\begin{lem}[Area Comparison Lemma, supported in $\Ksupp$] \label{lem:ACL-K}
Let $(M,g)$ be a complete riemannian $n$-manifold. Let $\bszeta \in H^1_\Ksupp(M;\bbZtwo)$ be a non-zero class with $\rho:=\Length(\bszeta)/2>0$, and let $\bsZ \in H_{n-1}^\Ksupp(M;\bbZtwo)$ be the Poincar\'e dual of $\bszeta$. Let $\delta >0$, and let $Z \subset M$ be a smooth embedded hypersurface, $\delta$-minimizing in its homology class $\bsZ$. Then, for any $z \in Z$ and any $r \in (0,\rho)$,
\begin{align*}
\Area \big(\partial \bar B(z,r)\big) \geq 2 \, \Area \big(Z \cap \bar B(z, r) \big) -2\delta \geq 2 \, \Area \big(D(z, r) \big) -2\delta.
\end{align*}
\end{lem}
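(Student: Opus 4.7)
The plan is to repeat the proof of \hyperref[lem:ACL-C]{Lemma~\ref*{lem:ACL-C}} essentially verbatim, taking care to replace each instance of the closed-manifold cup/cap pairing with the pairing between compact-support cohomology and closed-support homology described in \hyperref[ssec:Open-Homology]{\S\ref*{ssec:Open-Homology}}. The key observation is that everything happens inside the compact set $\bar B(z,r)$: since $(M,g)$ is complete and $r<\infty$, Hopf--Rinow gives compactness of $\bar B(z,r)$; and since $\bsZ \in H_{n-1}^\Ksupp(M;\bbZtwo)$, by \hyperref[lem:Codim1Rep]{Lemma~\ref*{lem:Codim1Rep}} we may take $Z$ to be a smooth embedded \emph{closed} hypersurface, so $Z \cap \bar B(z,r)$ is a compact chain. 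Thus all chains in the argument lie in a compact region, and their relative-homology and boundary-decomposition behavior is unchanged from the closed case.

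First, I would show that $Z \cap \bar B(z,r)$ is null-homologous as a relative $(n-1)$-cycle in the compact pair $\big(\bar B(z,r), \partial \bar B(z,r)\big)$. Suppose not. By Poincar\'e--Lefschetz duality for this compact pair, there is an absolute 1-cycle $\gamma$ in $\bar B(z,r)$ whose algebraic intersection with $Z \cap \bar B(z,r)$ is non-zero. Apply \hyperref[lem:CFL]{Lemma~\ref*{lem:CFL}} with a small $\varepsilon>0$ to decompose $\gamma \sim \sum_i \gamma_i$ with $\Length(\gamma_i) \le 2r+\varepsilon < 2\rho = \Length(\bszeta)$. Each $\gamma_i$ is a compactly supported 1-cycle in $M$, hence represents a class $\bsgamma_i \in H_1^\Fsupp(M;\bbZtwo)$, and the pairing $\langle \bszeta, \bsgamma_i \rangle$ between $\bszeta \in H^1_\Ksupp(M;\bbZtwo)$ and $\bsgamma_i \in H_1^\Fsupp(M;\bbZtwo)$ agrees with the algebraic intersection of $\gamma_i$ with $Z$. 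By the definition of $\Length(\bszeta)$, every short $\gamma_i$ must pair trivially, while their sum pairs non-trivially --- a contradiction.

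Next, exactly as in the closed case, the null-homologous relative cycle $Z \cap \bar B(z,r)$ is 2-sided and separating inside $\bar B(z,r)$, which decomposes $\bar B(z,r) = Q_1 \cup Q_0$ and, via the long exact sequence, decomposes $\partial \bar B(z,r) = P_1 \cup P_0$ along $Z \cap \partial \bar B(z,r)$. Choosing $P$ to be the smaller of $P_1, P_0$ by area yields
\[
\Area\big(\partial \bar B(z,r)\big) \ge 2\, \Area(P).
\]
Forming $Z' = (Z \ssminus \bar B(z,r)) \cup P$ and smoothing to $Z''$ produces a new smooth embedded closed hypersurface that differs from $Z$ only inside the compact set $\bar B(z,r)$, so $Z''$ represents the same compact-support class $\bsZ \in H_{n-1}^\Ksupp(M;\bbZtwo)$. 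The $\delta$-minimizing property of $Z$ then gives $\Area(Z'') \ge \Area(Z) - \delta$, from which subtracting $\Area(Z \ssminus \bar B(z,r))$ yields $\Area(P) \ge \Area(Z \cap \bar B(z,r)) - \delta$. Combining with the factor-of-two inequality and the obvious $\Area(Z \cap \bar B(z,r)) \ge \Area(D(z,r))$ finishes the proof.

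The only genuinely new point compared to \hyperref[lem:ACL-C]{Lemma~\ref*{lem:ACL-C}} is bookkeeping of support: one must check that the cycle $\gamma_i$ furnished by the Curve Factoring Lemma is compactly supported (so that it pairs against the compact-support cohomology class $\bszeta$), and that the modification $Z \rightsquigarrow Z''$ is performed inside a compact region (so that $Z''$ represents the same class in $H_{n-1}^\Ksupp$, to which $\delta$-minimizing applies). Both are immediate from completeness and from the compactness of $Z$. I expect no substantive obstacle beyond making these support conventions explicit.
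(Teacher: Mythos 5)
Your proposal is correct and follows the paper's intended proof: the paper simply states that the proof of this lemma ``is identical to that of Lemma~\ref{lem:ACL-C},'' and your write-up makes the implicit support-bookkeeping explicit. Since $\bar B(z,r)$ is compact (Hopf--Rinow) and $Z$ is a closed hypersurface by Lemma~\ref{lem:Codim1Rep}, every chain in the argument is compactly supported, the relevant pairing is $H^1_\Ksupp \otimes H_1^\Fsupp \rightarrow \bbZtwo$ as in the definition of $\Length(\bszeta)$, and the modification $Z \rightsquigarrow Z''$ inside $\bar B(z,r)$ leaves the compact-support class $\bsZ$ unchanged; these are exactly the checks needed to transfer the closed-manifold proof verbatim.
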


\begin{proof}
The proof is identical to that of \hyperref[lem:ACL-C]{Lemma~\ref*{lem:ACL-C}}.
\end{proof}

Although it is not necessary in the proof of \hyperref[thm:Open]{Theorem~\ref*{thm:Open}}, we also record the version of \hyperref[lem:ACL-C]{Lemma~\ref*{lem:ACL-C}} for homology and cohomology with closed support. For a non-zero homology class $\bszeta \in H^1_\Fsupp(M;\bbZtwo)$, we define
\[
\Length(\bszeta):=\inf \{ \ts \Length(\bsgamma) \mid \bsgamma \in H_1^\Ksupp(M;\bbZtwo), \langle \bszeta,\bsgamma \rangle \neq 0 \ts \}
\]
where $\Length(\bsgamma)$ for $\bsgamma \in H_1^\Ksupp(M;\bbZtwo)$ is defined to be the infimum of $\Length(\gamma)$ over all (compact) 1-cycles $\gamma$ representing $\bsgamma$. Since we pair $\bszeta$ with compactly supported homology classes, $\Length(\bszeta)$ is always finite. We note that, as in the compactly supported case, we have $\Length(\bszeta) \geq \Sys(M,g)$ for any non-zero class $\bszeta \in H^1_\Fsupp(M;\bbZtwo)$, but it is possible to have $\Length(\bszeta)=\Sys(M,g)=0$ if $M$ is non-compact; in the following lemma, we require $\Length(\bszeta)>0$.

\begin{lem}[Area Comparison Lemma, supported in $\Fsupp$] \label{lem:AFC-F}
Let $(M,g)$ be a complete riemannian $n$-manifold. Let $\bszeta \in H^1_\Fsupp(M;\bbZtwo)$ be a non-zero class with $\rho:=\Length(\bszeta)/2>0$, and let $\bsZ \in H_{n-1}^\Fsupp(M;\bbZtwo)$ be the Poincar\'e dual of $\bszeta$. Let $\delta >0$, and let $Z \subset M$ be a smooth embedded hypersurface, $(\rho,\delta)$-minimizing in its homology class $\bsZ$. Then, for any $z \in Z$ and any $r \in (0,\rho)$,
\begin{align*}
\Area \big(\partial \bar B(z,r)\big) \geq 2 \, \Area \big(Z \cap \bar B(z, r) \big) -2\delta \geq 2 \, \Area \big(D(z, r) \big) -2\delta.
\end{align*}
\end{lem}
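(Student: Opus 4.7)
The plan is to mimic the proof of \hyperref[lem:ACL-C]{Lemma~\ref*{lem:ACL-C}} almost line by line, with the only substantive modifications being (i)~adapting the Poincar\'e-Lefschetz / Curve Factoring argument to closed-support cohomology, and (ii)~invoking the \emph{local} $(\rho,\delta)$-minimizing condition in place of the global $\delta$-minimizing condition. Since $(M,g)$ is complete, the Hopf-Rinow theorem assures that $\bar B(z,r)$ and hence $\partial\bar B(z,r)$ are compact, so the topology inside the ball is the same as in the closed case.

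First, I would show that $Z\cap\bar B(z,r)$ is null-homologous as a relative $(n-1)$-cycle in $(\bar B(z,r),\partial\bar B(z,r))$. If it were not, Poincar\'e-Lefschetz duality in the compact pair $\bar B(z,r)$ would produce a compact absolute 1-cycle $\gamma\subset\bar B(z,r)$ with non-zero algebraic intersection number against $Z\cap\bar B(z,r)$. Applying \hyperref[lem:CFL]{Lemma~\ref*{lem:CFL}} with $\varepsilon>0$ small gives a decomposition $\gamma=\sum_i\gamma_i$ with $\Length(\gamma_i)\le 2r+\varepsilon<2\rho$, and some $\gamma_i$ must still intersect $Z$ algebraically non-trivially. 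Since each $\gamma_i$ is a compact 1-cycle, it represents a class in $H_1^\Ksupp(M;\bbZtwo)$; by Poincar\'e duality in $M$, non-zero algebraic intersection with $Z$ means $\langle\bszeta,[\gamma_i]\rangle\ne 0$. But then $\Length([\gamma_i])\ge\Length(\bszeta)=2\rho$, contradicting $\Length(\gamma_i)<2\rho$. This is precisely where the closed-support definition of $\Length(\bszeta)$ (pairing against compact 1-homology) matches exactly what CFL outputs.

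Once null-homology is established, the chain decomposition argument from the proof of \hyperref[lem:ACL-C]{Lemma~\ref*{lem:ACL-C}} transfers verbatim: the relative cycle $Z\cap\bar B(z,r)$, having multiplicity one, bounds a sum $Q_1$ of components of $\bar B(z,r)\ssminus Z$ and its complement $Q_0$ inside $\bar B(z,r)$; the boundary sphere splits as $\partial\bar B(z,r)=P_1\cup P_0$ along $Z\cap\partial\bar B(z,r)$, and picking the smaller one $P\in\{P_0,P_1\}$ yields $\Area(\partial\bar B(z,r))\ge 2\,\Area(P)$. Form the comparison hypersurface $Z'$ by replacing $Z\cap\bar B(z,r)$ with $P$ and smoothing. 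The crucial change is in the minimizing step: since $r<\rho$ we have $\bar B(z,r)\subset B(z,\rho)$, so $Z'$ agrees with $Z$ outside $B(z,\rho)$, and the $(\rho,\delta)$-minimizing hypothesis applied with $x=z$ gives
\[
\Area\bigl(Z'\cap B(z,\rho)\bigr)\ge\Area\bigl(Z\cap B(z,\rho)\bigr)-\delta.
\]
Outside $\bar B(z,r)$ the two hypersurfaces coincide, so the $Z\cap(B(z,\rho)\ssminus\bar B(z,r))$ term cancels, leaving $\Area(P)\ge\Area(Z\cap\bar B(z,r))-\delta$. Combining this with the factor-of-two inequality proves the first inequality, and the second follows from $Z\cap\bar B(z,r)\supset D(z,r)$ together with the usual observation that distances in $M$ are at most distances along $Z$.

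I do not expect any serious obstacles here: the main conceptual point is making sure that the $(\rho,\delta)$-local minimization hypothesis is strong enough to replace the global one, and the key is simply that the modification $Z\mapsto Z'$ is supported in $\bar B(z,r)\subset B(z,\rho)$, which is exactly the scale at which local minimization is assumed. The rest is bookkeeping that parallels \hyperref[lem:ACL-C]{Lemma~\ref*{lem:ACL-C}} and \hyperref[lem:ACL-K]{Lemma~\ref*{lem:ACL-K}}, so the author's proof is likely just a reference back to the earlier argument.
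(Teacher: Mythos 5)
Your proposal is correct and tracks the paper's own (extremely terse) proof, which simply refers back to Lemma~\ref{lem:ACL-C} with the remark that the modification and smoothing of $Z$ must take place locally within $B(z,\rho)\supset B(z,r)$. You have correctly identified the two places where care is needed—that the Curve Factoring Lemma produces \emph{compact} $1$-cycles, which is exactly what the closed-support definition of $\Length(\bszeta)$ pairs against, and that the replacement $Z\mapsto Z'\mapsto Z''$ is supported inside $B(z,\rho)$ so the $(\rho,\delta)$-minimizing hypothesis with $x=z$ applies—so this is a faithful expansion of the intended argument.
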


\begin{proof}
The proof is essentially identical to that of \hyperref[lem:ACL-C]{Lemma~\ref*{lem:ACL-C}}; we must make sure that the modification and smoothing of the hypersurface $Z$ take place locally within a ball $B(z,\rho) \supset B(z,r)$. 
\end{proof}

\subsection{Volume of Balls}

We now give a volume estimate for balls with small diameters in a complete riemannian manifold $M$. As in \hyperref[thm:Closed-Balls]{Theorem~\ref*{thm:Closed-Balls}}, we work under a hypothesis that $\bbZtwo$-cohomology has a maximal-cuplength, i.e. having first cohomology classes $\bszeta_1, \cdots, \bszeta_n$ with a non-vanishing cup-product. We will allow these classes to have mixed support so that our estimate applies to a broad class of manifolds; however, we must require that at least one of them has a compact support, so that the cup-product ends up in $H^n_\Ksupp(M;\bbZtwo) \cong \bbZtwo$, and not in $H^n_\Fsupp(M;\bbZtwo) \cong 0$.

\begin{thm} \label{thm:Open-Balls}
Let $(M,g)$ be a complete riemannian $n$-manifold. Suppose that there exist (not necessarily distinct) cohomology classes $\bszeta_1, \cdots, \bszeta_n$ in $H^1_\Fsupp(M;\bbZtwo)$ or $H^1_\Ksupp(M;\bbZtwo)$, with at least one of them in $H^1_\Ksupp(M;\bbZtwo)$, such that $\bszeta_1 \cup \cdots \cup \ts \bszeta_n \neq \bszero$ in $H^n_\Ksupp(M;\bbZtwo)$ and $\rho:=\min_i \Length(\bszeta_i)/2>0$. Then, for any $\delta>0$, there exists $z \in M$ such that, for any $R \in (0,\rho)$,
\[
\Vol\big(B(z,R)\big) \geq \frac{(2R)^n}{n!}-2R\delta.
\]
\end{thm}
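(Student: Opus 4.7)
The plan is to adapt the proof of \hyperref[thm:Closed-Balls]{Theorem~\ref*{thm:Closed-Balls}} by reducing the non-compact case to a single application of the closed-case result on a well-chosen compact hypersurface inside $M$. Two features distinguish the open case: the Poincar\'e dual hypersurface used in the slicing must be \emph{compact} so that \hyperref[thm:Closed-Balls]{Theorem~\ref*{thm:Closed-Balls}} applies to it; and, since $M$ is possibly non-compact, one cannot recover a clean constant by taking a limit of $\delta$-minimizers along a convergent subsequence, which is why the deficit $-2R\delta$ survives in the conclusion.

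After reordering I may assume $\bszeta_n \in H^1_\Ksupp(M;\bbZtwo)$, and I view the remaining classes $\bszeta_1, \ldots, \bszeta_{n-1}$ as elements of $H^1_\Fsupp(M;\bbZtwo)$ (promoting any that originally had compact support through the natural map $H^1_\Ksupp \to H^1_\Fsupp$, which by naturality of $\cup$ preserves the cup product $\bszeta_1 \cup \cdots \cup \bszeta_n$ in $H^n_\Ksupp(M;\bbZtwo)$). Let $\bsZ := \bsM \cap \bszeta_n \in H_{n-1}^\Ksupp(M;\bbZtwo)$ be the Poincar\'e dual of $\bszeta_n$. By \hyperref[lem:Codim1Rep]{Lemma~\ref*{lem:Codim1Rep}}, $\bsZ$ is realized by a smooth embedded closed (hence compact) hypersurface, and since such representatives have finite area I may take one, call it $Z$, that is $\delta$-minimizing in $\bsZ$.

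With $\iota: Z \hookrightarrow M$ the inclusion and $\bszeta'_i := \iota^* \bszeta_i \in H^1(Z;\bbZtwo)$ for $1 \leq i \leq n-1$, the identities $\iota_*[Z] = \bsZ = \bsM \cap \bszeta_n$ together with the standard relation $\langle \alpha \cup \beta, \bsM \rangle = \langle \alpha, \bsM \cap \beta \rangle$, applied to the mixed-support pairings $H^n_\Ksupp \otimes H_n^\Fsupp \to \bbZtwo$ and $H^{n-1}_\Fsupp \otimes H_{n-1}^\Ksupp \to \bbZtwo$, yield
\begin{align*}
\langle \bszeta'_1 \cup \cdots \cup \bszeta'_{n-1}, [Z] \rangle
&= \langle \bszeta_1 \cup \cdots \cup \bszeta_{n-1}, \iota_*[Z] \rangle \\
&= \langle \bszeta_1 \cup \cdots \cup \bszeta_n, \bsM \rangle \neq 0,
\end{align*}
so $\bszeta'_1 \cup \cdots \cup \bszeta'_{n-1} \neq \bszero$ in $H^{n-1}(Z;\bbZtwo)$. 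Any 1-cycle on $Z$ that pairs nontrivially with $\bszeta'_i$ also pairs nontrivially with $\bszeta_i$ when pushed into $M$, while its $M$-length is at most its $Z$-length, so $\Length(\bszeta'_i) \geq \Length(\bszeta_i) \geq 2\rho > 0$.

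Finally I apply \hyperref[thm:Closed-Balls]{Theorem~\ref*{thm:Closed-Balls}} to the closed $(n-1)$-manifold $(Z, g|_Z)$ with the classes $\bszeta'_1, \ldots, \bszeta'_{n-1}$ to produce a point $z \in Z$ with $\Area(D(z,r)) \geq (2r)^{n-1}/(n-1)!$ for every $r \in (0,\rho)$. Combining this with \hyperref[lem:ACL-K]{Lemma~\ref*{lem:ACL-K}}, which applies because $\bszeta_n \in H^1_\Ksupp$ and $Z$ is $\delta$-minimizing in $\bsZ$, gives $\Area(\partial \bar B(z,r)) \geq 2(2r)^{n-1}/(n-1)! - 2\delta$ on the same range, and coarea integration exactly as in \hyperref[eqn:Closed-Integral]{(\ref*{eqn:Closed-Integral})} yields $\Vol(B(z,R)) \geq (2R)^n/n! - 2R\delta$. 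The main obstacle is the cohomological bookkeeping in the mixed-support setting: the compactly supported class $\bszeta_n$ must be singled out so that its Poincar\'e dual is a \emph{compact} hypersurface and so that the pairing above is a statement about nonvanishing in $H^n_\Ksupp(M;\bbZtwo) \cong \bbZtwo$ rather than in the generally vanishing group $H^n_\Fsupp(M;\bbZtwo)$; once this is arranged, the rest of the argument is essentially a transplant of the closed-case proof.
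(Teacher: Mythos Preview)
Your proof is correct and follows essentially the same approach as the paper's: single out a compactly supported class $\bszeta_n$, take a $\delta$-minimizing compact hypersurface $Z$ in its Poincar\'e dual, restrict the remaining classes to $Z$, invoke \hyperref[thm:Closed-Balls]{Theorem~\ref*{thm:Closed-Balls}} on the closed manifold $Z$, and finish with \hyperref[lem:ACL-K]{Lemma~\ref*{lem:ACL-K}} and coarea integration. Your write-up is in fact slightly more explicit than the paper's about the promotion $H^1_\Ksupp \to H^1_\Fsupp$ for the non-distinguished classes and about why the $-2R\delta$ deficit cannot be removed absent compactness of $M$; the only minor omission is the trivial base case $n=1$, which the paper disposes of in a sentence.
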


\begin{rem}
In \hyperref[thm:Open-Balls]{Theorem~\ref*{thm:Open-Balls}}, we require a condition $\rho:=\min_i \Length(\bszeta_i)/2>0$ in order to utilize \hyperref[lem:ACL-K]{Lemma~\ref*{lem:ACL-K}}. This is a non-trivial condition if $M$ is non-compact.
\end{rem}

\begin{proof}
We may assume $n \geq 2$, since the statement holds trivially for $n=1$. We may also assume, without loss of generality, that $\bszeta_n \in H^1_\Ksupp(M;\bbZtwo)$.

Let $\delta>0$. Let $\bsZ \in H_{n-1}^\Ksupp(M;\bbZtwo)$ be the Poincar\'e dual of $\bszeta_n$, and let $Z$ be a closed hypersurface, $\delta$-minimizing in its non-zero homology class $\bsZ$. We write $\bsM \in H_n^\Fsupp(M;\bbZtwo)$ for the fundamental class for $M$. As in the proof of \hyperref[thm:Closed-Balls]{Theorem~\ref*{thm:Closed-Balls}}, by Poincar\'e duality and the property of the cup product, we have
\[
\langle \bszeta_1 \cup \cdots \cup \bszeta_{n-1}, \bsZ \rangle=\langle \bszeta_1 \cup \cdots \cup \bszeta_{n-1}, \bsM \cap \bszeta_n \rangle=\langle \bszeta_1 \cup \cdots \cup \bszeta_n, \bsM \rangle \neq 0.
\]
Restricting cocycle representatives $\zeta_1, \cdots, \zeta_{n-1}$ of $\bszeta_1, \cdots, \bszeta_{n-1}$ to $Z$, we obtain \emph{compactly supported} cohomology classes $\bszeta'_1, \cdots, \bszeta'_{n-1} \in H^1_\Ksupp(Z;\bbZtwo)=H^1(Z;\bbZtwo)$ such that $\bszeta'_1 \cup \cdots \cup \bszeta'_{n-1} \neq \bszero$ in $H^{n-1}_\Ksupp(Z;\bbZtwo)=H^{n-1}(Z;\bbZtwo)$. Moreover, $\rho':=\min_i \Length(\bszeta'_i)/2 \geq \rho >0$, since $\Length(\bszeta'_i) \geq \Length(\bszeta_i)$ for $1 \leq i \leq n-1$. Hence, by \hyperref[thm:Closed-Balls]{Theorem~\ref*{thm:Closed-Balls}}, there is a point $z\in Z$ such that, for any $r \in (0,\rho')$,
\begin{align} \label{eqn:Open-Codim1}
\Area \big(D(z,r)\big) \geq \frac{(2r)^{n-1}}{(n-1)!}_.
\end{align}
Now, let $R \in (0,\rho)$ and take a ball $B(z,R)$ in $M$ centered at $z \in Z$ above; note that we have $R < \rho'$ and $R < \Length(\bszeta_n)$. So, together with \hyperref[lem:ACL-K]{Lemma~\ref*{lem:ACL-K}} and the estimate \hyperref[eqn:Open-Codim1]{(\ref*{eqn:Open-Codim1})} above, the coarea integration identical to \hyperref[eqn:Closed-Integral]{(\ref*{eqn:Closed-Integral})} yields the desired inequality.
\end{proof}

Unlike \hyperref[thm:Closed-Balls]{Theorem~\ref*{thm:Closed-Balls}}, we may not be able to find a ball $B(z,R)$ with the volume bound $(2R)^n/n!$ without the error term; there is no obvious argument that guarantees that the balls $B(z,R)$ remain in a compact subset of $M$ as we vary $\delta \searrow 0$. Nonetheless, we can obtain the estimate of the volume of the ambient manifold by passing to the limit as $\delta \searrow 0$ and deduce an isosystolic inequality.

\begin{thm} \label{thm:Open}
Let $M$ be a (not necessarily closed) smooth $n$-manifold, and let $\hat M$ be a (possibly infinite-degree) cover of $M$. Suppose that there exist (not necessarily distinct) cohomology classes $\bszeta_1, \cdots, \bszeta_n$ in $H^1_\Fsupp(M;\bbZtwo)$ or $H^1_\Ksupp(M;\bbZtwo)$, with at least one of them in $H^1_\Ksupp(M;\bbZtwo)$, such that $\bszeta_1 \cup \cdots \cup \ts \bszeta_n \neq \bszero$ in $H^n_\Ksupp(M;\bbZtwo)$. Then, for any complete riemannian metric $g$ on $M$,
\begin{align*}
\Sys(M,g)^n &\leq n! \, \Vol(M,g).
\end{align*}
\end{thm}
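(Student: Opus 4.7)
The plan is to reduce \hyperref[thm:Open]{Theorem~\ref*{thm:Open}} to \hyperref[thm:Open-Balls]{Theorem~\ref*{thm:Open-Balls}} applied to the cover $\hat M$ equipped with the lifted metric, and then to transfer the resulting volume estimate back down to $M$ using the same covering-injectivity trick that appears in the proof of \hyperref[thm:Closed]{Theorem~\ref*{thm:Closed}}. If $\Sys(M,g)=0$ the inequality is trivial, so I will assume $\Sys(M,g)>0$ throughout.

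First, I would lift $g$ to a complete riemannian metric $\hat g$ on $\hat M$ through the covering map $p:\hat M\to M$; completeness of $\hat g$ follows because geodesics in $M$ lift to geodesics in $\hat M$ of equal length. Because $p_*:\pi_1(\hat M)\to\pi_1(M)$ is injective, every non-contractible loop in $\hat M$ projects to a non-contractible loop in $M$ of the same length, hence $\Sys(\hat M,\hat g)\geq\Sys(M,g)>0$. Combining this with the bound $\Length(\bszeta_i)\geq\Sys(\hat M,\hat g)$ noted in the paper for classes in either $H^1_\Fsupp(\hat M;\bbZtwo)$ or $H^1_\Ksupp(\hat M;\bbZtwo)$, I obtain $\rho:=\min_i\Length(\bszeta_i)/2\geq\Sys(M,g)/2>0$. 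This verifies the quantitative hypothesis of \hyperref[thm:Open-Balls]{Theorem~\ref*{thm:Open-Balls}} for $(\hat M,\hat g)$; the cup-length hypothesis is handed over unchanged.

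Given any $\delta>0$, \hyperref[thm:Open-Balls]{Theorem~\ref*{thm:Open-Balls}} then yields a point $\hat z\in\hat M$ such that
\[
\Vol\bigl(B(\hat z,R)\bigr)\;\geq\;\frac{(2R)^n}{n!}-2R\delta
\]
for every $R\in(0,\rho)$. Setting $z:=p(\hat z)$, I would next verify that whenever $R<\Sys(M,g)/2$ the restriction $p|_{B(\hat z,R)}$ is an isometry onto $B(z,R)$: otherwise two distinct preimages of some $y\in B(z,R)$ would lie in $B(\hat z,R)$, a short geodesic joining them in $\hat M$ would project to a loop in $M$ of length less than $2R<\Sys(M,g)$ based at $y$, and that loop would be non-contractible in $M$ since its endpoints in $\hat M$ are distinct, contradicting the definition of the systole. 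This is the verbatim argument from the closed case, and it still applies because it is purely local in the cover and makes no use of compactness or finite degree.

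It then follows that $\Vol(M,g)\geq\Vol(B(z,R))=\Vol(B(\hat z,R))\geq (2R)^n/n!-2R\delta$ for every $R<\Sys(M,g)/2$; letting $\delta\searrow 0$ and then taking the supremum as $R\nearrow\Sys(M,g)/2$ produces the stated inequality $\Sys(M,g)^n\leq n!\,\Vol(M,g)$. The step requiring the most care is verifying $\rho>0$ for classes of mixed support, since this is where the distinction between compactly supported and closed-support cohomology — central to the non-compact setting — must be handled; once that is in place, everything else is a direct transcription of the closed-manifold argument, with the $-2R\delta$ error term absorbed harmlessly by letting $\delta\searrow 0$ after the covering-injectivity step rather than before.
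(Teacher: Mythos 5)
Your proposal is correct and follows essentially the same route as the paper's own proof: dispose of $\Sys(M,g)=0$, apply \hyperref[thm:Open-Balls]{Theorem~\ref*{thm:Open-Balls}} to $(\hat M,\hat g)$ via the bound $\min_i\Length(\bszeta_i)\geq\Sys(\hat M,\hat g)\geq\Sys(M,g)>0$, project the ball isometrically down to $M$ using the covering-injectivity argument (which, as you correctly note, is local and needs neither compactness nor finite degree), and then let $\delta\searrow 0$ before taking the supremum over $R$. The extra sentences you include on lifting the metric, completeness of $\hat g$, and $\pi_1$-injectivity are harmless elaborations of what the paper leaves implicit.
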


\begin{proof}
If $\Sys(M,g)=0$, an isosystolic inequality holds for $(M,g)$ with any isosystolic constant. So, we may assume $\Sys(M,g)>0$. The cover $\hat M$ satisfies the hypothesis of \hyperref[thm:Open-Balls]{Theorem~\ref*{thm:Open-Balls}}, and $\min_i \Length(\bszeta_i) \geq \Sys(\hat M,\hat g) \geq \Sys(M,g)$. So, for each $\delta>0$, we can find a point $\hat z \in \hat M$ such that, for any $R \in (0,\Sys(M,g)/2)$,
\[
\Vol\big(B(\hat z,R)\big) \geq \frac{(2R)^n}{n!}-2R\delta.
\]
Now, recall that the covering argument used in the proof of \hyperref[thm:Closed-Balls]{Theorem~\ref*{thm:Closed-Balls}} does not require the degree of the covering map $p: \hat M \rightarrow M$ to be finite. So, as before, we can project the ball $B(\hat z, R)$ isometrically onto $B(z,R) \subset M$. Hence, we have
\[
\frac{(2R)^n}{n!}-2R\delta \leq \Vol \big(B(\hat z,R)\big) = \Vol \big(B(z,R)\big) \leq \Vol(M,g).
\]
Since this inequality holds for any $\delta>0$, we have $(2R)^n/n! \leq \Vol(M,g)$. Taking the supremum over $R<\Sys(M,g)/2$ and rearranging the constant, we obtain the desired inequality $\Sys(M,g)^n \leq n! \, \Vol(M,g)$.
\end{proof}

\begin{rem}
In addition to \hyperref[thm:Gromov]{Theorem~\ref*{thm:Gromov}}, Gromov established a universal isosystolic inequality for non-compact $n$-manifolds that are \emph{essential relative to infinity} \cite[\S4.4-4.5]{Gromov:Filling}; here, a non-compact $n$-manifold $M$ is said to be \emph{essential relative to infinity} if there is an aspherical space $K$ and a map $f:M \rightarrow K$, which is constant outside some compact subset in the interior of $M$, such that $f$ represents a non-zero class in $H_n(K;\bbA)$, with $\bbA=\bbZ$ if $M$ is orientable and $\bbA=\bbZtwo$ if $M$ is non-orientable. We remark that, although it is not immediate from the statement, the manifolds in \hyperref[thm:Open]{Theorem~\ref{thm:Open}} are probably essential relative to infinity.
\end{rem}

We are often primarily interested in isosystolic inequalities for closed manifolds. We point out that \hyperref[thm:Open]{Theorem~\ref*{thm:Open}} can play an essential role in establishing an isosystolic inequality for closed manifolds; even if a closed manifold and its finite-degree cover do not satisfy the hypothesis of \hyperref[thm:Closed]{Theorem~\ref*{thm:Closed}}, the same closed manifold and its \emph{infinite-degree} non-compact cover may satisfy the hypothesis of \hyperref[thm:Open]{Theorem~\ref*{thm:Open}}. This observation is crucial in \hyperref[sec:Asph3]{\S\ref*{sec:Asph3}}, where an isosystolic inequality is established for all closed aspherical 3-manifolds. We remark that an infinite-degree \emph{abelian} cover of a closed manifold has been utilized in the study of isosystolic inequalities in \cite{Katz-Lescop}.

\section{Tori and Real Projective Spaces} \label{sec:TRP}

The primary examples of manifolds whose $\bbZtwo$-cohomology have the maximal cup-length are tori $T^n$ and real projective spaces $\RP^n$ in all dimensions; the following inequality follows immediately from \hyperref[thm:Closed]{Theorem~\ref*{thm:Closed}}.

\begin{coro} \label{coro:TRP}
For any riemannian metric $g$ on $T^n$ and $\RP^n\nts$,
\begin{align*}
\Sys(T^n\nts,g)^n &\leq n! \, \Vol(T^n\nts,g) \\
\Sys(\RP^n\nts,g)^n &\leq n! \, \Vol(\RP^n\nts,g).
\end{align*}
\end{coro}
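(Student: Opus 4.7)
The plan is to apply \hyperref[thm:Closed]{Theorem~\ref*{thm:Closed}} directly, taking the finite-degree cover to be the manifold itself, $\hat M = M$. The only substantive step is to verify that the $\bbZtwo$-cohomology of $T^n$ and of $\RP^n$ has maximal cup-length $n$, which is classical.

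For the $n$-torus, the K\"unneth formula (or the identification $T^n \simeq (S^1)^n$ as a product) gives $H^*(T^n;\bbZtwo) \cong \Lambda_{\bbZtwo}[\alpha_1, \dots, \alpha_n]$, the exterior algebra on $n$ degree-one generators, where each $\alpha_i \in H^1(T^n;\bbZtwo)$ is the pullback of the generator of $H^1(S^1;\bbZtwo)$ under the $i$-th coordinate projection $T^n \to S^1$. The top cup product $\alpha_1 \cup \cdots \cup \alpha_n$ is the non-zero generator of $H^n(T^n;\bbZtwo) \cong \bbZtwo$. Setting $\bszeta_i := \alpha_i$ verifies the hypothesis of \hyperref[thm:Closed]{Theorem~\ref*{thm:Closed}} for $T^n$.

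For real projective space, the cohomology ring $H^*(\RP^n;\bbZtwo) \cong \bbZtwo[\alpha]/(\alpha^{n+1})$ is the truncated polynomial algebra on a single class $\alpha \in H^1(\RP^n;\bbZtwo)$, with $\alpha^n$ generating $H^n(\RP^n;\bbZtwo) \cong \bbZtwo$. Taking $\bszeta_1 = \cdots = \bszeta_n := \alpha$ verifies the hypothesis for $\RP^n$; here it is essential that \hyperref[thm:Closed]{Theorem~\ref*{thm:Closed}} permits the classes $\bszeta_i$ to be non-distinct, since $H^1(\RP^n;\bbZtwo) \cong \bbZtwo$ has only one non-zero element.

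There is no serious obstacle to this argument once the cup-length computations are in hand; both inequalities then follow at once by invoking \hyperref[thm:Closed]{Theorem~\ref*{thm:Closed}} with the trivial cover. The content of the corollary lies entirely upstream, in \hyperref[thm:Closed]{Theorem~\ref*{thm:Closed}} itself.
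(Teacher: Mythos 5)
Your proposal is correct and matches the paper's argument exactly: the corollary is stated as an immediate consequence of \hyperref[thm:Closed]{Theorem~\ref*{thm:Closed}} applied with $\hat M = M$, once one recalls that $H^*(T^n;\bbZtwo)$ is an exterior algebra on $n$ degree-one generators and $H^*(\RP^n;\bbZtwo) \cong \bbZtwo[\alpha]/(\alpha^{n+1})$. You spell out the cup-length computations that the paper leaves implicit, which is fine, and you correctly note that the hypothesis allows repeated classes, which is what makes the $\RP^n$ case go through.
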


For $n=2$, the optimal inequalities are given by Loewner's inequality (\hyperref[thm:Loewner]{Theorem~\ref*{thm:Loewner}}) for $T^2$ and Pu's inequality (\hyperref[thm:Pu]{Theorem~\ref*{thm:Pu}}) for $\RP^2$ in \cite{Pu}. For $n \geq 3$, our isosystolic inequalities for $T^n$ and $\RP^n$ improve upon the best previously known inequalities by Guth (\hyperref[thm:Guth]{Theorem~\ref*{thm:Guth}}) in \cite{Guth:Hypersurfaces}.

A better understanding of systoles in $T^n$ and $\RP^n$ is highly desirable, as they form two of the most fundamental families of closed manifolds. We collect a few observations on the isosystolic inequalities for $T^n$ and $\RP^n\nts$, and related manifolds.

\subsection{Compact Space Forms} \label{ssec:CSF}

Recall that \hyperref[thm:Closed]{Theorem~\ref*{thm:Closed}} allows us to obtain an isosystolic inequality for a manifold $M$ by lifting a large metric ball in $M$ to its cover $\hat M$. We observe that, by taking $\hat M$ to be the torus $T^n$ or the real projective space $\RP^n\nts$, we obtain isosystolic inequalities for some compact space forms.

\begin{thm} \label{thm:CSF}
Let $M$ be a closed $n$-manifold. Suppose that $M$ is homeomorphic~to either (i) a compact euclidean space forms, such as the $n$-torus $T^n\nts$, or (ii) a spherical space form with even-order fundamental group, such as the real projective $n$-space $\RP^n\nts$. Then, for any riemannian metric $g$ on $M$,
\begin{align} \label{eqn:CSF}
\Sys(M,g)^n &\leq n! \, \Vol(M,g).
\end{align}
\end{thm}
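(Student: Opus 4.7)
The strategy is to apply \hyperref[thm:Closed]{Theorem~\ref*{thm:Closed}} by exhibiting, in each case, a finite-degree cover $\hat M$ of $M$ whose $\bbZtwo$-cohomology has maximal cup-length $n$; the conclusion will then follow immediately.

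For case (i), I would invoke the Bieberbach theorem, which asserts that every compact euclidean space form admits a finite-sheeted cover by a flat torus. Transferring this cover through the assumed homeomorphism between $M$ and a compact euclidean space form, I obtain a finite-degree cover $\hat M$ of $M$ homeomorphic to $T^n$ (the smooth structure and the metric $g$ are pulled back to $\hat M$ via the cover). Since $H^*(T^n;\bbZtwo)$ is the exterior algebra on $n$ classes of degree one, the hypothesis of \hyperref[thm:Closed]{Theorem~\ref*{thm:Closed}} is met, and the desired inequality follows.

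For case (ii), suppose $M$ is homeomorphic to a spherical space form $S^n/\Gamma$ with $|\Gamma|$ even. By Cauchy's theorem, $\Gamma$ contains a subgroup $H$ of order two. Viewing $\Gamma$ as a subgroup of $O(n+1)$ acting freely on $S^n$, the nontrivial element $\tau\in H$ is an isometric involution without fixed points on $S^n$. The spectral theorem for orthogonal matrices then forces $\tau$ to have all eigenvalues equal to $-1$, so $\tau=-I$ is the antipodal map. Hence the intermediate cover of $M$ corresponding to $H$ is homeomorphic to $S^n/\{\pm I\}=\RP^n$, and I take $\hat M$ to be this cover. Since $H^*(\RP^n;\bbZtwo)=\bbZtwo[w_1]/(w_1^{n+1})$ has maximal cup-length $n$, \hyperref[thm:Closed]{Theorem~\ref*{thm:Closed}} again yields the inequality.

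I expect no serious obstacle: the plan reduces the geometric assertion to two classical topological facts — Bieberbach's theorem in case (i) and the determination of free isometric involutions of the round $S^n$ in case (ii) — after which all of the analytic and geometric content is absorbed into \hyperref[thm:Closed]{Theorem~\ref*{thm:Closed}}. The only item that deserves a brief mention is the transition from the topological cover to the smooth category, which is handled automatically by pulling back the smooth structure along the covering map and then pulling back the Riemannian metric $g$ on $M$ to obtain a Riemannian metric $\hat g$ on $\hat M$ with $\Sys(\hat M,\hat g)\geq \Sys(M,g)$.
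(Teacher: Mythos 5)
Your proposal is correct and follows essentially the same route as the paper: Bieberbach's theorem produces the flat-torus cover in case (i), and in case (ii) the unique order-2 element of a freely acting finite isometry group of $\Sph^n$ is identified as the antipodal map, so $\RP^n$ covers $M$; in both cases the maximal cup-length of the cover is standard and \hyperref[thm:Closed]{Theorem~\ref*{thm:Closed}} closes the argument. The only (welcome) addition is your explicit spectral-theorem justification that a fixed-point-free orthogonal involution must equal $-I$, which the paper states without proof.
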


\subsubsection*{Euclidean Space Forms}

A compact euclidean space form is a compact quotient of the euclidean space $\Euc^n=\left(\bbsR^n\nts,g_\euc\right)$, i.e. $\bbsR^n$ equipped with the flat metric $g_\euc$, by a discrete group $\varGamma$ of isometries acting freely on $\Euc^n\nts$.

\begin{proof}[Proof of {\hyperref[thm:CSF]{Theorem~\ref*{thm:CSF}}} for euclidean space forms]
Suppose that $M$ is homeomorphic to a compact space form $\Euc^n\nts/\varGamma$. By the classical work of Bieberbach, it is well-known that $\varGamma$ contains a finite-index normal subgroup $\varLambda \cong \bbZ^n\nts$. Hence, $\Euc^n\nts/\varGamma$ admits a finite-degree covering by a flat $n$-torus $\Euc^n\nts/\varLambda$, and $M$ admits a finite-degree covering by $\hat M$ homeomorphic to $T^n\nts$. The inequality \hyperref[eqn:CSF]{(\ref*{eqn:CSF})} now follows from \hyperref[thm:Closed]{Theorem~\ref{thm:Closed}}.
\end{proof}

\subsubsection*{Spherical Space Forms}

A spherical space form is a quotient of the standard sphere $\Sph^n=\left(S^n\nts,g_\sph\right)$, i.e. $S^n$ equipped with the standard metric $g_\sph$ with constant sectional curvature $\kappa \equiv 1$, by a discrete group $\varGamma$ of isometries acting freely on $\Sph^n\nts$. By elementary linear algebra, one can verify  that (i) when $n$ is odd, $\Sph^n\nts/\varGamma$ must be orientable, and (ii) when $n$ is even, $\Sph^n\nts/\varGamma$ must be $\RP^n\nts$, which is necessarily non-orientable, or $\Sph^n$ itself.

\begin{proof}[Proof of {\hyperref[thm:CSF]{Theorem~\ref*{thm:CSF}}} for spherical space forms]
Suppose that $M$ is homeomorphic to a compact space form $\Sph^n\nts/\varGamma$. If the order of $\varGamma$ is even, $\varGamma$ contains a subgroup $\varLambda$ of order 2 which is generated necessarily by the antipodal map. Hence, $\Sph^n\nts/\varGamma$ admits a finite-degree covering by $\RP^n=\Sph^n\nts/\varLambda$, and $M$ admits a finite-degree covering by $\hat M$ homeomorphic to $\RP^n\nts$. The inequality \hyperref[eqn:CSF]{(\ref*{eqn:CSF})} now follows from \hyperref[thm:Closed]{Theorem~\ref{thm:Closed}}.
\end{proof}

\subsection{Exotic Space Forms} \label{ssec:Exotic}

It is worthwhile to emphasize that the hypotheses in Gromov's isosystolic inequality (\hyperref[thm:Gromov]{Theorem~\ref*{thm:Gromov}}), Guth's inequality (\hyperref[thm:Guth]{Theorem~\ref*{thm:Guth}}) and our inequality in \hyperref[thm:Closed]{Theorem~\ref*{thm:Closed}} are topological, and they only concern the homotopy type of the manifolds. Hence, we can obtain isosystolic inequality for a manifold without knowing anything about the metrics that can be put on or the smooth structure of the manifold.

Under some hypothesis, it may happen that homotopy equivalence $M \simeq M'$ implies the existence of a diffeomorphism $M \cong M'$; however, in general, there are manifolds that are homotopy equivalent but not diffeomorphic. Whether there exists a manifold homotopy equivalent to, but not diffeomorphic to, a given manifold $M$ is a difficult question on its own. Let us collect a few examples of \emph{exotic} $\RP^n$ that are homotopy equivalent to, but not diffeomorphic to, the standard $\RP^n\nts$.

\subsubsection*{Quotients of Exotic $S^n$}

For $n \geq 7$, there are many exotic $n$-spheres, i.e. manifolds homeomorphic to $S^n$ but not diffeomorphic to the standard $S^n$. The first example is due to Milnor \cite{Milnor:Exotic} who constructed an exotic 7-sphere $M^7\nts$. It is an $S^3$-bundle over $S^4\nts$, and the antipodal map on fibers defines a smooth involution $\tau$ acting freely on $M^7\nts$. The smooth quotient $Q^7:=M^7\nts/\tau$ is homotopy equivalent to, but not diffeomorphic or PL-homeomorphic to, the standard $\RP^7$ \cite{Hirsch-Milnor}.

\subsubsection*{Exotic Quotients of $S^n$}

For $n \geq 5$, there are also many exotic smooth involutions acting freely on the standard $S^n\nts$. For example, the smooth involution $\tau$ on the Milnor manifold $M^7\nts$, in the discussion above, fixes smoothly embedded $S^6$ and $S^5$ with the standard smooth structures, but the restriction of $\tau$ to these spheres are exotic. The smooth quotients $Q^6:=S^6\nts/\tau$ and $Q^5:=S^5\nts/\tau$ are homotopy equivalent to, but not diffeomorphic to, the standard $\RP^6$ and $\RP^5$ respectively \cite{Hirsch-Milnor}.

\subsubsection*{Quotients of Potentially Exotic $S^n$}

Cappell and Shaneson \cite{Cappell-Shaneson} obtained an intriguing family of manifolds $Q^4$ from $\RP^4$ by removing the complement of a regular neighborhood of the standard $\RP^2 \subset \RP^4$ and replacing it with a particular bundle over $S^1$ with punctured $T^3$ fibers. Each $Q^4$ arising from their construction is homotopy equivalent to, but not diffeomorphic to, the standard $\RP^4$; its double-cover $M^4$ is homotopy equivalent to $S^4\nts$, and hence homeormophic to $S^4$ by the work of Freedman \cite{Freedman}. Some of those Cappell-Shaneson manifolds $M^4$ are known to be diffeomorphic to the standard $S^4\nts$, cf. \cite{Gompf:Killing}, \cite{Akbulut}, \cite{Gompf:More}; however it is not known if all of them are, and they comprise an important potential counterexamples to smooth Poincar\'{e} Conjecture in dimension 4.

\subsubsection*{Inequalities for Exotic Space Forms} 

We record a generalization of \hyperref[thm:CSF]{Theorem~\ref*{thm:CSF}}, allowing the manifold $M$ to be homotopy equivalent to compact space forms.

\begin{coro} \label{coro:Exotic}
Let $M$ be a closed $n$-manifold. Suppose that $M$ is homotopy equivalent to either (i) a compact euclidean space forms, such as $T^n\nts$, or (ii) a spherical space form with even-order fundamental group, such as $\RP^n\nts$. Then, for any riemannian metric $g$ on $M$,
\begin{align*}
\Sys(M,g)^n &\leq n! \, \Vol(M,g).
\end{align*}
\end{coro}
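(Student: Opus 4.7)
The plan is to observe that the hypothesis of \hyperref[thm:Closed]{Theorem~\ref*{thm:Closed}} is purely homotopy-invariant --- it concerns only the graded $\bbZtwo$-cohomology ring of a finite cover --- and then to mimic the proof of \hyperref[thm:CSF]{Theorem~\ref*{thm:CSF}} verbatim, using a homotopy equivalence $f:M \to N$ in place of a homeomorphism $M \cong N$. The essential covering-space principle I would invoke is the following: given a finite cover $\tilde N \to N$ corresponding to a finite-index subgroup $H \le \pi_1(N)$, the finite cover $\hat M \to M$ corresponding to $f_*^{-1}(H) \le \pi_1(M)$ fits into a lift $\hat f:\hat M \to \tilde N$ of $f$, and $\hat f$ is itself a homotopy equivalence (since $\hat f$ induces isomorphisms on all higher homotopy groups because $f$ does, and on $\pi_1$ by construction, then apply Whitehead's theorem). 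Consequently $H^*(\hat M;\bbZtwo) \cong H^*(\tilde N;\bbZtwo)$ as graded rings, and the maximal cup-length condition transfers from $\tilde N$ to $\hat M$.

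For case (i), write $N = \Euc^n/\varGamma$ for the compact euclidean space form in question. Bieberbach's theorem, exactly as invoked in the proof of \hyperref[thm:CSF]{Theorem~\ref*{thm:CSF}}, yields a finite-index subgroup $\varLambda \cong \bbZ^n$ of $\varGamma = \pi_1(N)$, whose associated cover is the flat torus $\Euc^n/\varLambda$, homeomorphic to $T^n$. Lifting as above gives a finite cover $\hat M \to M$ with $\hat M \simeq T^n$, whose $\bbZtwo$-cohomology has maximal cup-length. For case (ii), write $N = \Sph^n/\varGamma$ with $|\varGamma|$ even; Cauchy's theorem produces an order-$2$ subgroup $\varLambda \le \varGamma$, which as noted in the proof of \hyperref[thm:CSF]{Theorem~\ref*{thm:CSF}} must be generated by the antipodal map, so $\Sph^n/\varLambda \cong \RP^n$ is a double cover of $N$. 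Lifting yields a double cover $\hat M \to M$ with $\hat M \simeq \RP^n$, whose $\bbZtwo$-cohomology likewise has maximal cup-length. In both cases \hyperref[thm:Closed]{Theorem~\ref*{thm:Closed}}, applied to $M$ with this cover $\hat M$, delivers the desired isosystolic inequality $\Sys(M,g)^n \leq n!\,\Vol(M,g)$.

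There is no real obstacle: once \hyperref[thm:Closed]{Theorem~\ref*{thm:Closed}} and \hyperref[thm:CSF]{Theorem~\ref*{thm:CSF}} are in hand, the entire content of the corollary is the transfer of the maximal cup-length condition along a homotopy equivalence to a finite cover, which is immediate from the ring isomorphism above. The only point needing a moment's care is checking that the lifted map $\hat f$ is indeed a homotopy equivalence rather than merely a continuous map between covers, and this is a routine application of Whitehead's theorem.
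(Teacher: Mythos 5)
Your proof is correct and takes the same approach the paper has in mind: the paper presents the corollary as a "trivial observation" (that the hypothesis of Theorem~\ref{thm:Closed} is homotopy-invariant) and does not spell out the covering-space details, whereas you correctly identify that the essential step is lifting the homotopy equivalence $f:M\to N$ to a homotopy equivalence $\hat f:\hat M\to\tilde N$ between the corresponding finite covers and then transferring the maximal cup-length condition along the induced ring isomorphism on $\bbZtwo$-cohomology. Your use of Whitehead's theorem (or equivalently, lifting a homotopy inverse) to conclude that $\hat f$ is a homotopy equivalence is exactly the right argument, and the appeal to Bieberbach in case (i) and Cauchy plus the antipodal-map observation in case (ii) mirrors the proof of Theorem~\ref{thm:CSF} as intended.
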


Although this generalization is based on a trivial observation, it has somewhat non-trivial consequences. We emphasize that the inequality requires no knowledge of the smooth structure on the manifold $M$, let alone any metrics on it. For instance, regardless of the smooth structure, every Cappell-Shaneson manifolds $Q^4 \simeq \RP^4$ satisfies the isosystolic inequality $\Sys(Q,g)^4 \leq 24 \, \Vol(Q,g)$.

\subsubsection*{Remark on Babenko's Work}

If $n$ is odd so that $\RP^n$ is orientable, it has been known by the work of Babenko in \cite{Babenko:Asymptotic} and \cite{Babenko:Z2} that the standard $\RP^n$ and any homotopy $\RP^n$ share the same optimal isosystolic constant; hence, an isosystolic inequality holds for the standard odd-dimensional $\RP^n$ if and only if the inequality with the same isosystolic constant holds for an exotic $\RP^n\nts$, regardless of how we obtained our inequality for the standard $\RP^n$ in the first place. It is not known to the present author if the analogous statement holds in even dimensions, where $\RP^n$ is non-orientable. In even dimensions, though Babenko's result does not apply, we were still able to establish the inequality in \hyperref[coro:Exotic]{Corollary~\ref*{coro:Exotic}}, since every homotopy $\RP^n$ satisfies the cohomological criterion of \hyperref[thm:Closed]{Theorem~\ref*{thm:Closed}}.

\subsection{Sphere-Disk Area Ratio} \label{ssec:Ratio}

In \hyperref[coro:TRP]{Corollary~\ref*{coro:TRP}}, we saw that isosystolic inequality holds with the constant $C_n=n!$ for tori $T^n$ and real projective spaces $\RP^n\nts$. By the Stirling's approximation, the factorial $C_n=n!$ is approximated by $\sqrt{2 \pi n} (n/e)^n\nts$. The growth rate of the optimal constants $C_n$ is expected to be slower, in the order of $n^{n/2}$, both for $T^n$ and $\RP^n\nts$. Here, we shall discuss a few ideas on how one may, and may not, improve the isosystolic constants for $T^n$ and $\RP^n\nts$.

\subsubsection*{Unwinding the Inequality}

Recall that our isosystolic inequality comes from the volume estimate of a ball in \hyperref[thm:Closed-Balls]{Theorem~\ref*{thm:Closed-Balls}}. If we unwind the inductive proof, we find that our volume estimate can be written as
\begin{align} \label{eqn:OriginalFactors}
\Vol(B^n\ntts) \geq \bigg(\frac{R}{n} \cdot 2\bigg)\bigg(\frac{R}{n-1} \cdot 2\bigg) \cdots \bigg(\frac{R}{1} \cdot 2\bigg)= \frac{(2R)^n}{n!}_,
\end{align}
where $B^n=B^n(z,R)$ is a ball of radius $R \in(0,\Sys(M,g)/2)$ with a suitably chosen center $z$. In each step of the induction, going from the dimension $k$ to $k+1$, we pick up two factors, $2$ and $R/(k+1)$; the factor $2$ comes from the area comparison estimate \hyperref[eqn:ACL-C]{(\ref*{eqn:ACL-C})} in \hyperref[lem:ACL-C]{Lemma~\ref{lem:ACL-C}}, and the factor $R/(k+1)$ comes out of integrating the area of $k$-spheres in $(k+1)$-submanifolds with the coarea formula.

Suppose that we seek for a better inequality by refining this inductive argument, following the same outline of the argument. We shall focus on the factor $2$ from \hyperref[lem:ACL-C]{Lemma~\ref{lem:ACL-C}}, which represents the approximate \emph{sphere-disk area ratio bound}
\[
\Area\big(S^k(z,R)\big) \big/ \Area\big(D^k(z,R)\big) \gtrapprox 2
\]
between the $k$-spheres and $k$-disk, under the hypothesis of \hyperref[lem:ACL-C]{Lemma~\ref{lem:ACL-C}}. Na\"ively, if we can replace the factors $2$ with some factors $\alpha_k \geq 2$ which grows with $k$, the estimate \hyperref[eqn:OriginalFactors]{(\ref*{eqn:OriginalFactors})} becomes
\begin{align*}
\Vol(B^n\ntts) \geq \bigg(\frac{R}{n} \cdot \alpha_{n-1}\bigg)\bigg(\frac{R}{n-1} \cdot \alpha_{n-2}\bigg) \cdots \bigg(\frac{R}{1} \cdot \alpha_0\bigg)= \frac{\prod_{i=0}^{n-1} \alpha_k}{2^n} \cdot \frac{(2R)^n}{n!}_,
\end{align*}
which is a better volume estimate than the one we obtained in \hyperref[thm:Closed-Balls]{Theorem~\ref*{thm:Closed-Balls}}. We should make the following observation before we extend our na\"ivet\'e any further.

\begin{obs} \label{obs:Naivete}
In the class of manifolds satisfying the hypothesis of \hyperref[thm:Closed-Balls]{Theorem~\ref*{thm:Closed-Balls}}, the constant factors $2$ in the estimate \hyperref[eqn:ACL-C]{(\ref{eqn:ACL-C})} of \hyperref[lem:ACL-C]{Lemma~\ref*{lem:ACL-C}} is the optimal universal constant factors in all dimensions.
\end{obs}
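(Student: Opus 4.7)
The plan is to prove optimality by exhibiting, for each dimension $n \geq 2$ and each constant $\alpha > 2$, an explicit counterexample to the would-be stronger estimate $\Area(\partial \bar B(z,r)) \geq \alpha \ts \Area(Z \cap \bar B(z,r))$ within the class of manifolds satisfying the hypothesis of \hyperref[thm:Closed-Balls]{Theorem~\ref*{thm:Closed-Balls}}. Concretely, I would construct a closed riemannian $n$-manifold $(M,g)$ together with a genuinely area-minimizing (hence $\delta$-minimizing for every $\delta \geq 0$) embedded hypersurface $Z$, a point $z \in Z$, and a radius $r \in (0,\rho)$ for which the sphere-to-disk area ratio is strictly less than $\alpha$; this rules out $\alpha$ as a replacement for $2$ in the area comparison estimate.

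The canonical candidate is a thin flat torus. Let $M = T^n = \bbsR^n/\Lambda$, where $\Lambda$ is the orthogonal lattice generated by $L \bm{e}_1$ and $\varepsilon \bm{e}_2, \ldots, \varepsilon \bm{e}_n$ with $0 < \varepsilon \ll L$, equipped with the induced flat metric. The $\bbZtwo$-cohomology of $T^n$ has maximal cup-length, so the hypothesis of \hyperref[thm:Closed-Balls]{Theorem~\ref*{thm:Closed-Balls}} is met. Let $\bszeta \in H^1(T^n\nts;\bbZtwo)$ be dual to the coordinate subtorus $Z = \{x_1 = 0\}/\Lambda$; then $Z$ is area-minimizing in its homology class with $\Area(Z) = \varepsilon^{n-1}$ (any representative projects with odd degree onto this subtorus), and any 1-cycle with non-zero algebraic intersection against $Z$ must wind in the long $x_1$-direction, so $\Length(\bszeta) = L$ and $\rho = L/2$. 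Fixing $r$ with $\sqrt{n-1}\ts\varepsilon/2 < r < L/2$ and $z$ the origin on $Z$, one checks that $Z \cap \bar B(z,r) = Z$, while $\partial \bar B(z,r)$ consists of two sheets parametrized as graphs $x_1 = \pm\sqrt{r^2-|x'|^2}$ over $x' \in Z$; a brief computation of the induced area form gives
\[
\Area\bigl(\partial \bar B(z,r)\bigr) = 2 \int_Z \frac{r \ts dx'}{\sqrt{r^2-|x'|^2}} = 2 \ts \varepsilon^{n-1}\bigl(1 + O(\varepsilon^2/r^2)\bigr),
\]
so the ratio tends to $2$ as $\varepsilon/r \to 0$. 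Any $\alpha > 2$ is thereby excluded by taking $\varepsilon$ sufficiently small relative to $r$.

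I expect no substantive obstacle: the construction is completely explicit and the area calculation is elementary flat geometry. The only point that requires verification is that $\bar B(z,r)$ does not wrap in the lattice directions in an unexpected way, so that $\partial \bar B(z,r)$ is genuinely a euclidean sphere of radius $r$ intersected with the fundamental domain of $\Lambda$; this is automatic from $r < L/2$ and the rectangularity of $\Lambda$, which together prevent competing short geodesics from distorting the sphere.
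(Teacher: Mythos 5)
Your proposal is correct, and the thin-torus example is a clean alternative to what the paper actually does. The paper instead considers $\RP^{k+1}$ with the round metric of constant curvature $1$: taking $z$ on the totally geodesic $\RP^k$ through $z$ (the area-minimizing hypersurface in its $\bbZtwo$-class), it observes that $\Area\bigl(S^k(z,r)\bigr)/\Area\bigl(D^k(z,r)\bigr) \searrow 2$ as $r \nearrow \pi/2$, since the metric sphere and the geodesic disk both converge to a round unit hemisphere and the sphere ``doubles'' the disk in the limit. Both examples show that the factor $2$ cannot be replaced by any universal $\alpha > 2$, but they reach the limiting ratio in opposite regimes: the paper fixes the metric and pushes the radius to the cutoff $\rho = \pi/2$, while you fix the radius and degenerate the metric so that $Z$ and the two sheets of $\partial\bar B(z,r)$ become nearly isometric. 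Your explicit verification that the coordinate subtorus is area-minimizing (degree-one, $1$-Lipschitz projection) is a point the paper leaves implicit for $\RP^k \subset \RP^{k+1}$.

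One subtlety is worth flagging. The discussion surrounding the observation is really about whether the factor $2$ can be improved \emph{as used in the inductive proof of \hyperref[thm:Closed-Balls]{Theorem~\ref*{thm:Closed-Balls}}}, where Lemma~\ref{lem:ACL-C} is applied only for radii $r < \min_i \Length(\bszeta_i)/2$. In your thin torus, $n-1$ of the classes $\bszeta_2,\ldots,\bszeta_n$ have $\Length(\bszeta_i) = \varepsilon$, so $\min_i \Length(\bszeta_i)/2 = \varepsilon/2 \to 0$, while your radius sits in $(\sqrt{n-1}\,\varepsilon/2,\, L/2)$, strictly outside that smaller range. Your example therefore rules out strengthening Lemma~\ref{lem:ACL-C} on its stated range $r < \Length(\bszeta)/2$, but leaves open a priori the possibility of an improved constant on the range $r < \min_i\Length(\bszeta_i)/2$, which is what the induction actually needs. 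In the paper's $\RP^{k+1}$ example all nonzero classes have the same length $\pi$, so the two ranges coincide and the example closes off that loophole as well. This does not undercut the literal statement of the observation, but it explains why the round projective space is the sharper choice in context.
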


Let us elaborate on \hyperref[obs:Naivete]{Observation~\ref*{obs:Naivete}}. Consider the real projective space $\RP^{k+1}$ with the homogenous symmetric metric $g_\sph$ with constant curvature $\kappa\equiv1$. We have $\Sys(\RP^{k+1}\nts,g_\sph)=\pi$. Now, for any point $z \in \RP^{k+1}\nts$, take the symmetric hyperplane $\RP^k$ through $z$. Then, for the sphere $S^k(z,r) \subset \RP^{k+1}$ and the disk $D^k(z,r) \subset \RP^k$, with radius $r \in (0,\pi/2)$, we have
\[
\Area\big(S^k(z,r)\big) \big/ \Area\big(D^k(z,r)\big) \searrow 2 \quad \text{as} \quad r \nearrow \pi/2,
\]
since $S^k(z,r)$ and $D^k(z,r)$ become nearly isometric to the standard $k$-dimensional sphere and hemisphere respectively, as $r \rightarrow \pi/2$ from below. Hence, it is impossible to replace the factors 2 with an increasing sequence of constants $\alpha_k$, universal for the class of manifolds satisfying the hypothesis of \hyperref[thm:Closed-Balls]{Theorem~\ref*{thm:Closed-Balls}}.

\subsubsection*{Euclidean and Spherical Ratios}

\hyperref[obs:Naivete]{Observation~\ref{obs:Naivete}} suggests that, in order to make further improvements, one may seek a sequence $\alpha_k$ of sphere-disk area ratios bound, valid for any metric $g$ on a particular manifold $M$. For example, if $M \cong T^n\nts$, one may hope to find sphere-disk area ratio bounds $\alpha_k$ that grows comparably to the \emph{euclidean} sphere-disk area ratios:
\begin{align*}
\textstyle \alpha^\euc_k=2 \sqrt{\pi} \; \Gamma\!\left(\frac{k+2}{2}\right) \nts \big/ \, \Gamma\!\left(\frac{k+1}{2}\right)_.
\end{align*}
If we can find a sequence $\alpha_k$ with the same order of growth as the euclidean ones, then the argument of \hyperref[thm:Closed-Balls]{Theorem~\ref*{thm:Closed-Balls}} would produce isosystolic constants $C_n$ with the same order of the growth as the volume of euclidan balls, which resolves the so-called \emph{Generalized Geroch Conjecture} by Gromov affirmatively.

Alternatively, for each manifold $M$, one could also seek a sequence of sphere-disk area ratio bounds as \emph{functions} in the radius $r$, instead of constants. For example, if $M \cong \RP^n\nts$, one may hope for a sequence of sphere-disk area ratio functions that behaves comparably to the \emph{spherical} sphere-disk area ratio functions:
\begin{align*}
\alpha^{\sph}_k(r)=\frac{\sqrt{\pi} \; \Gamma \! \left(\frac{k}{2}\right)}{\Gamma \! \left(\frac{k+1}{2}\right) \int_{0}^{r} (\sin t)^{k-1} \, dt}
\end{align*}
where the metric is normalized so that $0 \leq r \leq \pi/2$. If we can replace the constant factor 2 in \hyperref[lem:ACL-C]{Lemma~\ref*{lem:ACL-C}} with functions $\alpha_k(r)$ comparable to $\alpha_k^\sph(r)$, then the argument of \hyperref[thm:Closed-Balls]{Theorem~\ref*{thm:Closed-Balls}} would produce isosystolic constants $C_n$ with the same order of the growth as the volume of spherical balls. If we can indeed replace the constant factor 2 in \hyperref[lem:ACL-C]{Lemma~\ref*{lem:ACL-C}} with functions $\alpha_k^\sph(r)$, then successive integrations show that the normalized volume of metric balls in $(\RP^n\nts,g)$ is at least the volume of metric balls in $(\RP^n\nts,g_\sph)$, which then implies that $(\RP^n\nts,g_\sph)$ actually realizes the optimal isosystolic constant. Searching for a sequence of sphere-disk ratio functions is related to \emph{Filling Volume Conjecture} by Gromov.

\section{Geometric Three-Manifolds} \label{sec:Geom3}

As we have noted, our \hyperref[thm:Closed]{Theorem~\ref*{thm:Closed}} specializes to Hebda's inequality for \emph{all} closed essential surfaces with $C_2=2$. On the other hand, the cohomological criterion in \hyperref[thm:Closed]{Theorem~\ref*{thm:Closed}}, or even \hyperref[thm:Open]{Theorem~\ref*{thm:Open}}, imposes a material constraint in higher dimensions; we should not expect our inequality to hold for all essential $n$-manifolds. As it turns out, our inequality holds for a quite large class of essential 3-manifolds. We start our discussion of 3-manifolds with \emph{geometric} 3-manifolds in this section.

One may note immediately that \hyperref[thm:CSF]{Theorem~\ref*{thm:CSF}} (for compact space forms) can be specialized to closed 3-manifolds which arise as euclidean or spherical space forms. The euclidean geometry and the spherical geometry are two of the \emph{eight model geometries} for closed 3-manifolds, in the sense of Thurston. Our isosystolic inequality holds more generally for \emph{most} closed 3-manifolds which admit one of the eight model geometries.

\begin{thm} \label{thm:Geom}
Let $M$ be a closed essential 3-manifold which admits one of eight model geometries. Suppose that $M$ is not a lens space $L(p,q)$ with odd order $p$. Then, for any riemannian metric $g$ on $M$,
\begin{align*}
\Sys(M,g)^3 &\leq 6 \, \Vol(M,g).
\end{align*}
\end{thm}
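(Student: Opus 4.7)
The plan is to reduce to \hyperref[thm:Closed]{Theorem~\ref*{thm:Closed}} (which gives the constant $3!=6$ in dimension $n=3$) by exhibiting, for each of Thurston's eight model geometries, a finite cover $\hat M$ of $M$ whose $\bbZtwo$-cohomology has maximal cup-length $3$. When $M$ is modeled on $\Euc^3$ or $\Sph^3$, \hyperref[thm:CSF]{Theorem~\ref*{thm:CSF}} already applies: in the spherical case, essentiality rules out $M=S^3$, and the exclusion of odd-order lens spaces together with the classical classification of spherical $3$-manifolds forces $\pi_1(M)$ to have even order and contain the antipodal $\bbZtwo$, so that $M$ is doubly covered by $\RP^3$. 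The closed essential manifolds modeled on $\Sph^2\times\bbsR$ are $\RP^2\times S^1$ and $\RP^3\csum\RP^3$, both of which already have cup-length $3$: for the first, the square of the non-trivial class in $H^1(\RP^2;\bbZtwo)$ cupped with the $S^1$-class is the fundamental class; for the second, if $x\in H^1(\RP^3\csum\RP^3;\bbZtwo)$ denotes the pullback of the generator of $H^1(\RP^3;\bbZtwo)$ under the degree-one pinch onto a summand, then $x^3\neq 0$.

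For the three Seifert-fibered geometries $\Hyp^2\times\bbsR$, $\SLtwo$, and $\Nil$, a standard argument produces a finite cover in which $\hat M$ is a principal $S^1$-bundle over a closed surface $\Sigma$ with $\chi(\Sigma)\leq 0$. The Leray--Serre spectral sequence of $\hat M\to\Sigma$ with $\bbZtwo$ coefficients has differential $d_2\colon E_2^{0,1}\to E_2^{2,0}$ equal to cup product with the mod-$2$ reduction of the Euler number. If this vanishes (as it does automatically for $\Hyp^2\times\bbsR$, whose Euler number is $0$), the spectral sequence collapses multiplicatively and $H^*(\hat M;\bbZtwo)\cong H^*(\Sigma\times S^1;\bbZtwo)$ as rings, giving cup-length $3$. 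If the Euler number is odd, which may happen for $\SLtwo$ and $\Nil$, pulling the bundle back along any degree-$2$ cover $\tilde\Sigma\to\Sigma$ doubles the Euler number and reduces to the previous case.

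The remaining geometries $\Sol$ and $\Hyp^3$ are handled together: $M$ admits a finite cover fibering over $S^1$ (by definition for $\Sol$; by Agol's Virtual Fibering Theorem for $\Hyp^3$), with fiber $\Sigma$ either a torus or a closed surface of genus $\geq 2$. The monodromy acts on $H_1(\Sigma;\bbZtwo)$ through the finite symplectic group $\mathrm{Sp}(2g,\bbZtwo)$, so passing to a suitable finite cyclic cover of the base $S^1$ kills the mod-$2$ monodromy, yielding $H^*(\hat M;\bbZtwo)\cong H^*(\Sigma\times S^1;\bbZtwo)$ and cup-length $3$. In every case, \hyperref[thm:Closed]{Theorem~\ref*{thm:Closed}} now delivers $\Sys(M,g)^3\leq 6\,\Vol(M,g)$. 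The main non-routine ingredients are Agol's deep Virtual Fibering Theorem in the hyperbolic case and the mod-$2$ Euler-class doubling trick in the Seifert cases; the other geometries come down to direct cup-product computations.
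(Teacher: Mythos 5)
Your argument follows the same strategy as the paper's: reduce to Theorem~\ref{thm:Closed} by exhibiting, for each geometry, a finite cover of $M$ whose $\bbZtwo$-cohomology has cup-length~$3$, invoking Bieberbach and the classification of spherical forms for $\Euc^3$ and $\Sph^3$, direct cup-product computations for $\SR$, and Agol's Virtual Fibering Theorem for $\Hyp^3$. The difference is in how cup-length~$3$ is detected: the paper constructs three explicit embedded surfaces with a single triple intersection point (Propositions~\ref{prop:CircleBundles} and~\ref{prop:SurfaceBundles}), while you read it off from the Gysin/Leray--Serre and Wang sequences. Both routes work, and your Seifert-case treatment is in fact slightly tighter than the paper's: you explicitly track the mod-$2$ Euler class and pass to a double cover of the base when it is odd, whereas the paper's Proposition~\ref{prop:CircleBundles} treats the base $S_0$ as a surface sitting inside $\hat M$, which implicitly presumes a section, i.e.\ Euler number zero --- something that does not hold automatically for the $\Nil$ and $\SLtwo$ pull-backs, where a further cover is indeed needed.

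One flaw on your side: in the mapping-torus case you assert that $H^*(\hat M;\bbZtwo)\cong H^*(\Sigma\times S^1;\bbZtwo)$ \emph{as rings} once the mod-$2$ monodromy is killed. That is stronger than what the Wang sequence gives and, as the paper's remark after Proposition~\ref{prop:SurfaceBundles} points out, is false in general (the obstruction being a Johnson-type invariant of the monodromy). The correct and sufficient statement is weaker: with trivial mod-$2$ monodromy, the class $\zeta_0\in H^1(\hat M;\bbZtwo)$ dual to the fiber satisfies $\zeta_0\cup\alpha\cup\beta\neq 0$ for any $\alpha,\beta\in H^1(\hat M;\bbZtwo)$ that restrict to a dual pair on $F$, since evaluating $\zeta_0\cup\alpha\cup\beta$ on $[\hat M]$ reduces to evaluating $\alpha|_F\cup\beta|_F$ on $[F]$, i.e.\ to the nondegeneracy of the mod-$2$ intersection form of $F$. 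Replacing the ring-isomorphism claim with this Wang-sequence computation makes your proof complete.
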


After reviewing the eight model geometries in \hyperref[ssec:8Geom]{\S\ref*{ssec:8Geom}}, we prove \hyperref[thm:Geom]{Theorem~\ref*{thm:Geom}} for two non-aspherical geometries in \hyperref[ssec:NonAspherical]{\S\ref*{ssec:NonAspherical}}, for four aspherical Seifert geometries in \hyperref[ssec:CircleBundles]{\S\ref*{ssec:CircleBundles}}, and for two remaining aspherical geometries in \hyperref[ssec:SurfaceBundles]{\S\ref*{ssec:SurfaceBundles}}.

\subsection{Eight Model Geometries} \label{ssec:8Geom}

In late 1970's, Thurston developed the notion of \emph{model geometries} and proposed \emph{Geometrization Conjecture}, which roughly says that, given a closed 3-manifold $M$, each piece in a natural topological decomposition of $M$ admits one of the eight model geometries. A \emph{model geometry} is a simply connected smooth manifold $X$ together with a smooth transitive action of a Lie group $G$ on $X$ with compact stabilizers; we further impose two additional conditions that (i) $G$ is maximal among groups acting smoothly and transitively on $X$, and (ii) there is at least one cocompact discrete subgroup of $G$. We write $\Geo$ for the manifold $X$ equipped with a left-invariant metric with respect to the action of $G$. A 3-manifold is said to \emph{admit a model geometry $\Geo$}, if it is diffeomorphic to a quotient of $\Geo$ by a discrete torsion-free subgroup of $G$. 

In dimension 2, it is classically known that there are three model geometries $\Sph^2\nts, \Euc^2\nts, \Hyp^2\nts$, up to normalization, and they are precisely the standard 2-dimensional riemannian universal space forms of constant curvature $\kappa=+1, 0, -1$, respectively; we call them the 2-dimensional spherical, euclidean, and hyperbolic geometries, respectively. In dimension 3, Thurston found that there are eight model geometries:
\[
\Sph^3\nts, \quad \Euc^3\nts, \quad \Hyp^3\nts, \ \quad \SR, \quad \HR, \quad \SLtwo, \quad \Nil, \quad \Sol.
\]
See \cite[\S3.8]{Thurston:Book} for the detail on the classification of model geometries.

The geometries $\Sph^3\nts, \Euc^3\nts, \Hyp^3$ are the standard riemannian universal space forms of constant curvature $\kappa=+1, 0, -1$ respectively; we call them the 3-dimensional spherical, euclidean, and hyperbolic geometries. The geometries $\SR$ and $\HR$ have the product geometry, arising from the 2-dimensional model geometries. $\SLtwo$ is the universal cover of $\SL_2(\bbsR)$, and it fibers over $\Hyp^2\nts$. $\Nil$ is the geometry of the Heisenberg group, and it fibers over $\Euc^2\nts$. Finally, $\Sol$ is the geometry of the solvable group with 0-dimensional point stabilizers. The geometries other than $\Sph^3$ and $\SR$ are all contractible, and hence every geometric manifold is aspherical unless it is modeled on $\Sph^3$ or $\SR$.

The geometries $\SR, \Euc^3, \HR$ have product structures, and the geometries $\Sph^3\nts, \Nil, \SLtwo$ fibers over $\Sph^2\nts, \Euc^2\nts, \Hyp^2\nts$, respectively. These six geometries are called \emph{Seifert geometries}, since a closed 3-manifold is a \emph{Seifert fibered space}, i.e. a circle bundle over a base 2-orbifold, if and only if it is modeled on one of these geometries. A description of Seifert fibered spaces from this viewpoint can be found in \cite{Scott:8geom}; other standard results on these manifolds can be found in \cite[Ch.VI]{Jaco:3mfld}, \cite[Ch.2]{Hatcher:3mfld}. Seifert fibered spaces with $\SR$ or $\Sph^3$ geometry have the base 2-orbifold with $\Sph^2$ geometry, and they are said to be of \emph{spherical type}. Seifert fibered spaces with $\Euc^3$ or $\Nil$ geometry have the base 2-orbifold with $\Euc^2$ geometry, and they are said to be of \emph{euclidean type}. Seifert fibered spaces with $\HR$ or $\SLtwo$ geometry have a base 2-orbifold with $\Hyp^2$ geometry, and they are said to be of \emph{hyperbolic type}.

\subsection{Non-Aspherical Geometries} \label{ssec:NonAspherical}

Let us first discuss two non-aspherical geometries, $\Sph^3$ and $\SR$. We shall see that, except for lens spaces with odd-order fundamental groups, our inequality holds for all essential manifolds.

\subsubsection*{$\Sph^3$ Geometry}

The classification of spherical 3-manifolds is classical; see \cite[\S3-4]{GLLUW} for example. By the resolution of Geometrization Conjecture, every closed orientable 3-manifold with finite fundamental group admits $\Sph^3$ geometry. The proof below elaborates on the 3-dimensional spherical case of \hyperref[thm:CSF]{Theorem~\ref*{thm:CSF}}.

\begin{proof}[Proof of {\hyperref[thm:Geom]{Theorem~\ref*{thm:Geom}}} for $\Sph^3$ geometry]
It is classically known that a point group $\varGamma$ acting on $\Sph^3$ is cyclic, or is a central extension of a dihedral, tetrahedral, octahedral, or icosahedral group by a cyclic group of even order. Hence, for a closed manifold $M$ with $\Sph^3$ geometry, the order of $\varGamma \cong \pi_1(M)$ is odd if and only if $\varGamma$ is trivial (yielding $S^3$) or odd-order cyclic (yielding $L(p,q)$ with odd order $p$). The result now follows from \hyperref[thm:CSF]{Theorem~\ref*{thm:CSF}}.
\end{proof}

One noteworthy example is the Poincar\'e dodecahedral space $\varSigma^3\nts$. It is an integral homology sphere with trivial first and second homology, with respect to any coefficient, but it satisfies the isosystolic inequality $\Sys(\varSigma,g)^3 \leq 6 \, \Vol(M,g)$.

\begin{rem}
It is known that all spherical 3-manifolds other than $S^3$ are essential, but our result does not handle lens spaces $L(p,q)$ with odd order $p$. Some approaches to isosystolic inequalities for such $L(p,q)$ are proposed in \cite{KKSSW}.
\end{rem}

\subsubsection*{$\SR$ Geometry}

Manifolds with $\SR$ geometry are well-understood, and there are only four of them: $S^2 \times S^1\nts$, $S^2 \rtimes S^1\nts$, $\RP^3 \csum \RP^3\nts$, $\RP^2 \times S^1\nts$. Here, $S^2 \rtimes S^1$ denotes the non-orientable $S^2$-bundle over $S^1\nts$, or equivalently the mapping torus of the antipodal map on $S^2\nts$. Among the four closed 3-manifolds witih $\SR$ geometry, $\RP^3 \csum \RP^3\nts$ and $\RP^2 \times S^1$ are essential while $S^2 \times S^1$ and $S^2 \rtimes S^1$ are inessential; we shall elaborate on this fact in \hyperref[prop:Ess1]{Proposition~\ref*{prop:Ess1}}.

\begin{rem}
It is well-known that no isosystolic inequality holds for $S^2 \times S^1$ or $S^2 \rtimes S^1\nts$. To see this, take a sequence $g_m$ of product metrics, obtained from the standard product metric by scaling down the metric in $S^2$ direction by a factor $1/m$ while keeping the metric in $S^1$ direction unchanged. Then, for this sequence of metrics, the volume goes down to 0 as $m$ approaches $\infty$, while the systole remains constant for all $m$; clearly, no isosystolic inequality holds.
\end{rem}

\begin{proof}[Proof of {\hyperref[thm:Geom]{Theorem~\ref*{thm:Geom}}} for $\SR$ geometry]
We shall explicitly find three surfaces with a single triple intersection point. For $\RP^3 \csum \RP^3\nts$, we can take three copies of $\RP^2$ in one connect-summand $\RP^3$; we can perturb these $\RP^2$ to have a single triple intersection point. For $\RP^2 \times S^1\nts$, take a transverse pair of homotopically non-trivial loops $\gamma_1 \simeq \gamma_2$ on a fiber $F_0:=\RP^2\nts$, with a single intersection point. Then, $F_0$, $S_1:=\gamma_1 \times S^1\nts$, $S_2:=\gamma_2 \times S^1$ give three surfaces with a single triple intersection point. The result now follows from \hyperref[thm:Closed]{Theorem~\ref*{thm:Closed}}.
\end{proof}

We note that the existence of an isosystolic inequality for $\RP^3 \csum \RP^3$ is not prevented by the absence of isosystolic inequalities for the double cover $S^2 \times S^1\nts$. The above sequence of metrics $g_m$ on $S^2 \times S^1$ descends to a metric $\bar g_m$ on $\RP^3 \csum \RP^3\nts$, and the shortest non-trivial geodesic loop $\gamma \subset (S^2 \times S^1\nts, g_m)$ maps injectively onto a non-trivial geodesic loop $\bar \gamma \subset (\RP^3 \times \RP^3\nts,\bar g_m)$ with $\Length(\bar \gamma)=\Length(\gamma)$. However, for large $m$, the systole $\Sys(\RP^3 \csum \RP^3\nts,\bar g_m)$ is not realized by $\bar \gamma$; instead, it is realized by the shortest geodesic loop on each copy of $\RP^2\nts$. In particular, the systole $\Sys(\RP^3 \csum \RP^3\nts,\bar g_m)$ decreases as $m \rightarrow \infty$.

\subsection{Circle Bundles over a Surface} \label{ssec:CircleBundles}

Let us now consider closed 3-manifolds that arise as circle bundles over a surface. Such a manifold is always a Seifert fibereded space whose base is a surface, i.e. a 2-orbifold without elliptic singular points. We shall see that these manifolds satisfy the criterion of \hyperref[thm:Closed]{Theorem~\ref*{thm:Closed}}.

\begin{prop} \label{prop:CircleBundles}
Let $M$ be a closed 3-manifold, with a finite-degree cover $\hat M$ homeoemorphic to a circle bundle over a surface. Suppose that the base surface of $\hat M$ is not $S^2\nts$. Then, for any riemannian metric $g$ on $M$,
\begin{align*}
\Sys(M,g)^3 &\leq 6 \, \Vol(M,g).
\end{align*}
\end{prop}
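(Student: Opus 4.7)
The plan is to produce a finite-degree cover $\bar M$ of $M$, factoring through $\hat M$, that satisfies the hypothesis of \hyperref[thm:Closed]{Theorem~\ref*{thm:Closed}}; since $3! = 6$, that theorem will yield the desired $\Sys(M,g)^3 \leq 6 \, \Vol(M,g)$. Write $\pi: \hat M \to \Sigma$ for the circle bundle structure, and let $w \in H^2(\Sigma;\bbZtwo)$ denote its mod-$2$ Euler class. Since $\Sigma \neq S^2$, Poincar\'e duality on the surface $\Sigma$ makes the cup-product pairing $H^1(\Sigma;\bbZtwo) \otimes H^1(\Sigma;\bbZtwo) \to H^2(\Sigma;\bbZtwo)$ non-degenerate; I can then pick $\alpha_1, \alpha_2 \in H^1(\Sigma;\bbZtwo)$ with $\alpha_1 \cup \alpha_2 \neq \bszero$ and set $\bszeta_1 := \pi^*\alpha_1$, $\bszeta_2 := \pi^*\alpha_2$ in $H^1(\hat M;\bbZtwo)$. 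When $w = \bszero$, the Gysin sequence makes $\pi^*$ injective on $H^2$, so $\bszeta_1 \cup \bszeta_2 \neq \bszero$, with Poincar\'e dual equal to the fiber class $[F] \in H_1(\hat M;\bbZtwo)$; the same sequence also makes the fiber-integration map $\pi_!: H^1(\hat M;\bbZtwo) \to H^0(\Sigma;\bbZtwo)$ surjective, so one can select $\bszeta_3 \in H^1(\hat M;\bbZtwo)$ with $\langle \bszeta_3, [F] \rangle = 1$. Then $\langle \bszeta_1 \cup \bszeta_2 \cup \bszeta_3, [\hat M] \rangle = \langle \bszeta_3, [F] \rangle \neq 0$, and \hyperref[thm:Closed]{Theorem~\ref*{thm:Closed}} applies with $\hat M$ as the cover.

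When $w \neq \bszero$ and $\Sigma \neq \RP^2$, I would reduce to the preceding case by passing to a degree-$2$ base cover $f: \bar \Sigma \to \Sigma$. Such a cover exists because $\pi_1(\Sigma)$ surjects onto $\bbZtwo$ for any closed surface $\Sigma \neq S^2$, and $\bar \Sigma \neq S^2$ since $\chi(\bar \Sigma) = 2\chi(\Sigma) \leq 0$ whenever $\Sigma \neq S^2, \RP^2$. The pullback bundle $\bar M := \hat M \times_\Sigma \bar \Sigma$ is then a degree-$2$ cover of $\hat M$, and it is an $S^1$-bundle over $\bar \Sigma$ with mod-$2$ Euler class $f^* w$; since $f^*$ acts as multiplication by $\deg f = 2 \equiv 0 \pmod 2$ on the top-dimensional $\bbZtwo$-cohomology of a surface, $f^*w = \bszero$, so the first paragraph's construction applies to $\bar M$, which is still a finite cover of $M$.

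The exceptional base $\Sigma = \RP^2$ must be handled directly, since its only degree-$2$ cover is $S^2$. Each $S^1$-bundle $\hat M$ over $\RP^2$ has $\bbZtwo$-cohomology of cup length $3$ and satisfies the hypothesis of \hyperref[thm:Closed]{Theorem~\ref*{thm:Closed}} with $\hat M$ itself as the cover: for $\hat M = \RP^2 \times S^1$, one has $w_1(\RP^2)^2 \cup \sigma \neq \bszero$ where $\sigma$ generates $H^1(S^1;\bbZtwo)$; and for $\hat M = \RP^3$, the generator $\alpha \in H^1(\RP^3;\bbZtwo)$ satisfies $\alpha^3 \neq \bszero$. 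The main obstacle I anticipate is an exhaustive check of the remaining $S^1$-bundles (including possible non-principal or non-orientable ones) over $\RP^2$, each of which I expect to admit a similar direct cohomological verification.
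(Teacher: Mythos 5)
Your route differs from the paper's. The paper's proof is a short geometric argument: pick two non-separating loops $\gamma_1, \gamma_2$ on the base $S_0$ meeting once, let $F_i = \pi^{-1}(\gamma_i)$, and assert that $S_0$, $F_1$, $F_2$ are three non-separating surfaces in $\hat M$ with a single triple intersection point, so that \hyperref[thm:Closed]{Theorem~\ref*{thm:Closed}} applies. Reading the base $S_0$ as an embedded horizontal surface presupposes a section, i.e.\ that the mod-$2$ Euler class $w$ vanishes. Your Gysin-sequence bookkeeping makes this obstruction explicit, and it is genuine: if $w \neq \bszero$ then $\pi^*$ kills $H^2(\Sigma;\bbZtwo)$, the fiber class dies in $H_1(\hat M;\bbZtwo)$, and $\hat M$ itself has $\bbZtwo$-cup-length $< 3$ (the Euler-number-$1$ circle bundle over $T^2$, i.e.\ the Heisenberg manifold, is a concrete instance). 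Your remedy --- pulling back along a degree-$2$ base cover to force $w = \bszero$ --- is exactly the step the paper's proof glosses over, and your $w = \bszero$ case via fiber integration is correct.

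The $\RP^2$ base case, however, leaves a real gap, and the expectation you state is false. First, $\RP^3$ is \emph{not} a circle bundle over $\RP^2$ --- it is the Euler-number-$2$ circle bundle over $S^2$. Second, the circle bundles over $\RP^2$ with nonzero Euler number (lens spaces of order $4$ and prism manifolds) do \emph{not} have maximal $\bbZtwo$-cup-length: for $L(4,1)$, which fibers over $\RP^2$, the generator $\alpha \in H^1(L(4,1);\bbZtwo)$ satisfies $\alpha^2 = 0$ because $\alpha$ lifts to $H^1(L(4,1);\bbZ/4\bbZ)$ and hence its mod-$2$ Bockstein vanishes. So no direct cohomological check on $\hat M$ itself can close this case; one would have to pass to a further cover such as $\RP^3$, which is no longer a circle bundle over a surface $\neq S^2$ and so lies outside your reduction scheme. (In the paper, circle bundles over $\RP^2$ are of $\Sph^3$ or $\SR$ type and are settled via \hyperref[thm:CSF]{Theorem~\ref*{thm:CSF}} and the $\SR$ case of \hyperref[thm:Geom]{Theorem~\ref*{thm:Geom}}; the aspherical Seifert pieces for which \hyperref[prop:CircleBundles]{Proposition~\ref*{prop:CircleBundles}} is actually invoked have orientable base of non-positive Euler characteristic, where your dichotomy suffices.)
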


\begin{proof}
Since $\hat M$ is closed, its base $S_0$ is necessarily closed. Then, there exist two non-separating loops $\gamma_1, \gamma_2 \subset S_0$ which intersect once. Let $F_1, F_2$ be the preimages of $\gamma_1, \gamma_2$, respectively, under the bundle projection. Then, $S_0, F_1, F_2$ are three non-separating surfaces with a single triple intersection point. Hence, $M$ satisfies the criterion of \hyperref[thm:Closed]{Theorem~\ref{thm:Closed}}, and the isosystolic inequality for $M$ follows.
\end{proof}

\begin{rem}
When a circle bundle $\hat M$ has $\RP^2$ as the base surface, $\hat M$ is either $\RP^3 \csum \RP^3$ or $\RP^2 \times S^1$; we have already considered these manifolds in \hyperref[ssec:NonAspherical]{\S\ref*{ssec:NonAspherical}}.
\end{rem}

\subsubsection*{$\Euc^3$, $\Nil$, $\HR$, and $\SLtwo$ Geometries}

We now consider the remaining four Seifert fibered geometries, $\Euc^3$, $\Nil$, $\HR$, and $\SLtwo$. A closed 3-manifold $M$ is an aspherical Seifert fibered space if and only if $M$ admits one of these geometries. The classification of these manifolds is classical; see, for examples, \cite{Hatcher:3mfld}. We have already established our inequality for Seifert fibered spaces with $\Euc^3$ geometry in \hyperref[thm:CSF]{Theorem~\ref*{thm:CSF}}; we now show that our inequality holds for other aspherical Seifert fibered spaces as well.

\begin{proof}[Proof of {\hyperref[thm:Geom]{Theorem~\ref*{thm:Geom}}} for aspherical Seifert fibered geometries]
Every closed Seifert fibered space is a circle bundle over a closed 2-orbifold. A 2-orbifold is said to be \emph{bad} if it doesn't admit a 2-manifold cover, and is said to be \emph{good} otherwise. Bad 2-orbifolds are rare; all 2-orbifolds with $\Euc^2$ or $\Hyp^2$ geometries are known to be good.

If $M$ is a closed aspehrical Seifert fibered space, it is of euclidean type or hyperbolic type. The base 2-orbifold is a closed 2-orbifold with $\Euc^2$ or $\Hyp^2$ geometries, and it admits an orientable closed 2-manifold cover with the same geometry. We can pull back the Seifert fibering along this covering map, and obtain a closed 3-manifold $\hat M$ which is indeed a circle bundle over an orientable closed surface other than $S^2\nts$. Hence, the result now follows from \hyperref[prop:CircleBundles]{Proposition~\ref*{prop:CircleBundles}}.
\end{proof}

\subsection{Surface Bundles over a Circle} \label{ssec:SurfaceBundles}

Finally, we consider closed 3-manifolds that arise as surface bundles over a circle. We shall verify that these manifolds satisfy the criterion of \hyperref[thm:Closed]{Theorem~\ref{thm:Closed}}.

\begin{prop} \label{prop:SurfaceBundles}
Let $M$ be a closed 3-manifold, with a finite-degree cover $\hat M$ homeoemorphic to a surface bundle over the circle. Suppose that the fiber surface of $\hat M$ is not $S^2\nts$. Then, for any riemannian metric $g$ on $M$,
\begin{align*}
\Sys(M,g)^3 &\leq 6 \, \Vol(M,g).
\end{align*}
\end{prop}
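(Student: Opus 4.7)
The plan mirrors the proof of \hyperref[prop:CircleBundles]{Proposition~\ref*{prop:CircleBundles}}: we produce a finite cover of $M$ satisfying the cohomological criterion of \hyperref[thm:Closed]{Theorem~\ref*{thm:Closed}}, namely three classes in $H^1(\cdot\ts;\bbZtwo)$ with nonzero triple cup-product, and invoke that theorem. One of the three classes is at hand: pulling back the generator of $H^1(S^1;\bbZtwo)$ along the bundle projection $\hat M \rightarrow S^1$ yields a class $\tau \in H^1(\hat M;\bbZtwo)$ whose Poincar\'e dual is the fiber class $[F]$. For any $\alpha,\beta \in H^1(\hat M;\bbZtwo)$, the identity $[\hat M] \cap \tau = [F]$ together with naturality of cup product gives
\[
\langle \tau \cup \alpha \cup \beta,\, [\hat M]\rangle \;=\; \langle \alpha|_F \cup \beta|_F,\, [F]\rangle,
\]
so it suffices to find two classes in $H^1(\hat M;\bbZtwo)$ whose restrictions to $F$ have nonzero cup product on $F$.

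Since $F$ is a closed surface other than $S^2$, Poincar\'e duality on $F$ gives a perfect pairing $H^1(F;\bbZtwo) \otimes H^1(F;\bbZtwo) \rightarrow H^2(F;\bbZtwo) \cong \bbZtwo$, so a pair $u, v \in H^1(F;\bbZtwo)$ with $u \cup v \neq \bszero$ always exists. The only remaining issue is the surjectivity of the restriction $H^1(\hat M;\bbZtwo) \rightarrow H^1(F;\bbZtwo)$: by the Wang sequence of the fibration $F \rightarrow \hat M \rightarrow S^1$, its image is the $\phi^*$-invariant subgroup, where $\phi\colon F \rightarrow F$ is the monodromy, and this subgroup may be too small to contain such a pair. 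To remedy this, I would pass to a finite cyclic cover. Since $\phi^*$ acts on the finite $\bbZtwo$-vector space $H^1(F;\bbZtwo)$, it has finite order $k$, so the $k$-fold cyclic cover $\tilde M \rightarrow \hat M$ obtained by pulling back the $k$-fold cover $S^1 \rightarrow S^1$ along the bundle projection is again a surface bundle over $S^1$ with the same fiber $F$, now with monodromy $\phi^k$ acting trivially on $H^1(F;\bbZtwo)$. For $\tilde M$ the restriction map is then surjective.

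To finish, I would choose $u, v \in H^1(F;\bbZtwo)$ with $u \cup v \neq \bszero$, lift them to $\alpha, \beta \in H^1(\tilde M;\bbZtwo)$, and take $\tau$ for $\tilde M \rightarrow S^1$; the displayed identity then yields $\tau \cup \alpha \cup \beta \neq \bszero$ in $H^3(\tilde M;\bbZtwo)$. Since $\tilde M$ is a composition of finite covers and hence a finite-degree cover of $M$, \hyperref[thm:Closed]{Theorem~\ref*{thm:Closed}} applies and yields $\Sys(M,g)^3 \leq 3!\,\Vol(M,g) = 6\,\Vol(M,g)$.

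The main obstacle is recognizing that passing to a cyclic cover is genuinely necessary rather than merely a convenience. For instance, a $T^2$-bundle whose monodromy interchanges the two generators of $H_1(T^2;\bbZtwo)$ has a one-dimensional $\phi^*$-invariant subspace of $H^1(T^2;\bbZtwo)$, spanned by the sum of the dual generators, on which every self cup-product vanishes in $H^2(T^2;\bbZtwo)$. Thus $\hat M$ itself may fail to satisfy the hypothesis of \hyperref[thm:Closed]{Theorem~\ref*{thm:Closed}}, while the double cyclic cover does.
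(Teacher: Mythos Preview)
Your proof is correct and follows essentially the same strategy as the paper: pass to a finite cyclic cover on which the monodromy acts trivially on $H^1(F;\bbZtwo)$, take the fiber class together with two classes lifted from $H^1(F;\bbZtwo)$ with nonzero cup product, and invoke \hyperref[thm:Closed]{Theorem~\ref*{thm:Closed}}. The only difference is packaging: the paper works on the homology side, constructing explicit embedded surfaces $S_1,S_2\subset\hat M_k$ as images of cobordisms in $F\times I$ between $\gamma_i\times\{0\}$ and $f^k(\gamma_i)\times\{1\}$ and then counting a single triple intersection point with the fiber $F_0$, whereas you stay on the cohomology side, using the Wang sequence to identify the image of restriction and the cap-product identity $[\hat M]\cap\tau=[F]$ to reduce the triple cup product to a cup product on $F$. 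These are Poincar\'e-dual formulations of the same argument.
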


\begin{proof}
Since $\hat M$ is closed, its fiber $F \not \cong S^2$ is necessarily closed. We regard $\hat M$ as a mapping torus of $f \in \Homeo(F)$, i.e. the quotient of $F \times I$ by the identification $(x,0) \sim (f(x),1)$. Note that $f$ induces an isomorphism $f_*$ on $H_1(F;\bbZtwo)$, permuting the nonzero elements of $H_1(F;\bbZtwo)$. Since $H_1(F;\bbZtwo)$ is finite, some power $(f_*)^k=(f^k\ntts)_*$ is the identity map on $H_1(F;\bbZtwo)$. Let $\hat M_k$ be the $k$-fold cyclic cover of $\hat M$ with respect to the fibering of $\hat M$, i.e. the mapping torus of $f^k \in \Homeo(F)$.

Since $F \not \cong S^2$ is closed, there exist two non-separating loops $\gamma_1$ and $\gamma_2$ on $F$ which intersect once. Now, we construct two surfaces $S_1$ and $S_2$ as follows. Note that $\gamma_i$ and $f^k(\gamma_i)$ are homologous since $[\gamma_i]=(f^k\ntts)_*[\gamma_i]=[f^k(\gamma_i)]$ in $H_1(F;\bbZtwo)$. So, they co-bound a 2-chain in $F$; hence, there is a 2-chain in $F \times I$ whose boundary is the union of $\gamma_i \times \{0\}$ and $f^k(\gamma_i) \times \{1\}$. This 2-chain can be regarded as a relative 2-cycle in $(F \times I, \partial(F \times I))$, which then defines a class in $H_2(F \times I, \partial (F \times I) ; \bbZtwo)$. Now, we have Poinca\'{e}-Lefschetz duality $H_2(F \times I, \partial (F \times I) ; \bbZtwo) \cong H^1(F \times I; \bbZtwo)$ and a bijection between $H^1(F \times I;\bbZtwo)$ and $[F \times I, \RP^\infty]$; hence, we see that the 2-chain connecting $\gamma_i \times \{0\}$ and $f^k(\gamma_i) \times \{1\}$ defines a homotopy class of a map from $F \times I$ into $\RP^\infty\nts$. Now, by the standard argument as in \hyperref[ssec:Open-Smooth]{\S\ref*{ssec:Open-Smooth}}, we see that this 2-chain is represented by a (possibly non-orientable) smooth embedded surface $S'_i$ in $F \times I$, with $\partial S'_i=(\gamma_i \times \{0\}) \cup (f^k(\gamma_i) \times \{1\})$. Finally, we let $S_i$ to be the image of $S'_i$ under the identification map $q: F \times I \mapsto \hat M_k$. We also let $F_0$ be the image of $F \times \{0\} \subset F \times I$ under the identification map $q$.

By construction, we have a covering map $\hat M_k \rightarrow M$ and three non-separating surfaces $F_0, S_1, S_2$ in $\hat M^k$ that intersect transversely at a single triple intersection point, which is the intersection of the image of $\gamma_1 \cap \gamma_2$ in $F_0$. Hence, $M$ satisfies the criterion of \hyperref[thm:Closed]{Theorem~\ref*{thm:Closed}}, and the isosystolic inequality for $M$ follows.
\end{proof}

\begin{rem}
For any coefficient $\bbA$, when $f \in \Homeo(F)$ induces the identity map on $H_1(F;\bbA)$, homology group and cohomology \emph{group} of the mapping torus of $f$ are isomorphic to those of the product $F \times S^1\nts$, but cohomology \emph{ring} need not be isomorphic to that of the product $F \times S^1\nts$. When $f$ represents an element of Torelli subgroup of the mapping class group $\Mod(F)$, where $F$ is an orientable surface with $\chi(F) <0$, the so-called \emph{Johnson homomorphism} measures the extent the $\bbZ$-cohomology ring fails to be isomorphic to that of $F \times I$; see \cite{Johnson} or \cite{Hain}.\end{rem}

\subsubsection*{$\Sol$ and $\Hyp^3$ geometries}

We now consider the remaining two geometries. For $\Sol$ geometry, our inequality can be deduced immediately from \hyperref[prop:SurfaceBundles]{Proposition~\ref*{prop:SurfaceBundles}}.

\begin{proof}[Proof of {\hyperref[thm:Geom]{Theorem~\ref*{thm:Geom}}} for $\Sol$ geometry]
A closed manifold with $\Sol$ geometry admits a finite-degree cover by a $T^2$-bundle over the circle. Hence, the result follows from \hyperref[prop:SurfaceBundles]{Proposition~\ref*{prop:SurfaceBundles}}.
\end{proof}

Closed manifolds with $\Hyp^3$ geometry, i.e. hyperbolic 3-manifolds, are much less understood in comparison to manifolds with other geometries. However, in the past several years, there has been tremendous amount of progress in understanding the structure of these manifolds. In particular, Agol recently resolved \emph{Virtual Haken Conjecture} and \emph{Virtual Fibering Conjecture} affirmatively \cite{Agol:VHC} for closed hyperbolic 3-manifolds.

\begin{proof}[Proof of {\hyperref[thm:Geom]{Theorem~\ref*{thm:Geom}}} for $\Hyp^3$ geometry]
Virtual Fibering Conjecture, in this context, states that every closed hyperbolic 3-manifold has a finite-degree cover that fibers over the circle. With the affirmative resolution \cite{Agol:VHC} of the conjecture, \hyperref[prop:SurfaceBundles]{Proposition~\ref*{prop:SurfaceBundles}} applies to all closed hyperbolic 3-manifolds.
\end{proof}

\begin{rem}
Generally, a surface bundle is \emph{not} necessarily geometric. In particular, \hyperref[prop:SurfaceBundles]{Proposition~\ref*{prop:SurfaceBundles}} applies to a more general class of manifolds beyond geometric manifolds. We discuss this aspect further in \hyperref[ssec:HyperbolicPiece]{\S\ref*{ssec:HyperbolicPiece}}.
\end{rem}

\section{Aspherical Three-Manifolds} \label{sec:Asph3}

In this section, we establish isosystolic inequalities for \emph{all} closed aspherical 3-manifolds. Our aim is to show that we can always apply \hyperref[thm:Open]{Theorem~\ref*{thm:Open}} to these manifolds. As we shall see, \hyperref[thm:Closed]{Theorem~\ref*{thm:Closed}} alone is not sufficient to establish our inequality; in some cases, we apply \hyperref[thm:Open]{Theorem~\ref*{thm:Open}} to a closed aspherical manifold and its infinite-degree non-compact cover.

\begin{thm} \label{thm:Asph}
Let $M$ be a closed aspherical 3-manifold. Then, for any riemannian metric $g$ on $M$,
\begin{align} \label{eqn:Asph}
\Sys(M,g)^3 &\leq 6 \, \Vol(M,g).
\end{align}
\end{thm}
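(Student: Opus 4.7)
The plan is to split $M$ into cases using Perelman's Geometrization and, in each case, identify a cover — finite or infinite-degree non-compact — on which \hyperref[thm:Closed]{Theorem~\ref*{thm:Closed}} or \hyperref[thm:Open]{Theorem~\ref*{thm:Open}} applies.

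If $M$ is geometric, then since $M$ is aspherical its geometry is one of the six aspherical ones, and the inequality is already recorded as \hyperref[thm:Geom]{Theorem~\ref*{thm:Geom}}. So I may assume $M$ has a non-trivial JSJ decomposition. If at least one JSJ piece is hyperbolic, then by Agol's Virtual Fibering Theorem \cite{Agol:VHC} combined with the Przytycki-Wise extension to non-positively curved mixed 3-manifolds \cite{Przytycki-Wise}, $M$ admits a finite-degree cover fibering over $S^1$ with a closed orientable surface fiber of positive genus, and \hyperref[prop:SurfaceBundles]{Proposition~\ref*{prop:SurfaceBundles}} concludes. The same proposition disposes of every graph manifold that virtually fibers.

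The essential remaining case is a closed aspherical graph manifold $M$ admitting no virtual fibering; here \hyperref[thm:Open]{Theorem~\ref*{thm:Open}} is indispensable. By virtual Haken / virtual specialness results for NPC graph manifolds, I may pass to a finite cover in which $b_1(M;\bbZ) \geq 1$, and pick a primitive surjection $\pi_1(M) \twoheadrightarrow \bbZ$ whose Poincar\'e dual is a non-separating embedded surface $\Sigma \subset M$ of genus at least one. Let $\hat M \to M$ be the associated infinite cyclic cover. The preimage of $\Sigma$ decomposes as a disjoint union $\{\hat\Sigma_n\}_{n\in\bbZ}$ of compact copies, each homeomorphic to $\Sigma$, and a single lift $\hat\Sigma_0$ represents the Poincar\'e dual of a compactly supported class $\bszeta_3 \in H^1_\Ksupp(\hat M;\bbZtwo)$. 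For $\bszeta_1, \bszeta_2 \in H^1_\Fsupp(\hat M;\bbZtwo)$ I would use pullbacks through $\hat M \to M$ of classes on $M$ whose restrictions to $\Sigma$ form a symplectic pair in $H^1(\Sigma;\bbZtwo)$, so that $\bszeta_1|_{\hat\Sigma_0} \cup \bszeta_2|_{\hat\Sigma_0}$ is the fundamental class of $\hat\Sigma_0$. Using the mixed-support duality of \hyperref[ssec:Open-Homology]{\S\ref*{ssec:Open-Homology}}, one computes $\langle \bszeta_1 \cup \bszeta_2 \cup \bszeta_3,[\hat M]\rangle = \langle \bszeta_1|_{\hat\Sigma_0} \cup \bszeta_2|_{\hat\Sigma_0}, [\hat\Sigma_0] \rangle = 1$, so the triple cup product generates $H^3_\Ksupp(\hat M;\bbZtwo) \cong \bbZtwo$, and \hyperref[thm:Open]{Theorem~\ref*{thm:Open}} delivers the inequality.

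I expect the main obstacle to be the construction of $\bszeta_1,\bszeta_2$ in this final subcase: the cohomology of $M$ may restrict degenerately to $\Sigma$, so two classes in $H^1(M;\bbZtwo)$ whose restrictions to $\Sigma$ cup to its fundamental class need not exist on $M$ itself. Overcoming this requires a further appeal to the virtual specialness of NPC graph manifolds \cite{Przytycki-Wise} to pass to yet another finite cover whose $\bbZtwo$-cohomology restricts surjectively onto $H^1(\Sigma;\bbZtwo)$; verifying that such an enlargement is always available, for any closed aspherical graph manifold that does not virtually fiber, is the key technical step and the deepest input from the topology of 3-manifolds.
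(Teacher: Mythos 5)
Your overall case division — geometric, hyperbolic JSJ piece (via virtual fibering and \hyperref[prop:SurfaceBundles]{Proposition~\ref*{prop:SurfaceBundles}}), and then the remaining graph manifolds — tracks the paper's reduction. The gap is in your treatment of the final case, and it is substantial.

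You propose an infinite cyclic cover $\hat M \to M$ dual to a non-separating surface $\Sigma$ and want $\bszeta_1,\bszeta_2$ to be pullbacks of classes in $H^1(M;\bbZtwo)$ restricting to a symplectic pair in $H^1(\Sigma;\bbZtwo)$. But notice that this hypothesis already defeats the purpose: if $a,b \in H^1(M;\bbZtwo)$ restrict to a symplectic pair on $\Sigma$, then by Poincar\'e duality on the closed manifold $M$ we have $\langle a \cup b \cup \PD[\Sigma],[M]\rangle = \langle a|_\Sigma \cup b|_\Sigma,[\Sigma]\rangle \neq 0$, so the maximal cup-length condition holds on $M$ itself and \hyperref[thm:Closed]{Theorem~\ref*{thm:Closed}} applies directly with no infinite cover needed. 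Conversely, if no such pair exists on $M$ (or on any finite cover), your construction provides no $\bszeta_1,\bszeta_2$, so the infinite cyclic cover never helps. Your proposed fix — invoking virtual specialness of NPC graph manifolds \cite{Przytycki-Wise} — also cannot close the gap, because by Liu's theorem the graph manifolds surviving to this final case are precisely the ones that are \emph{not} non-positively curved (else they would virtually fiber and be dispatched by \hyperref[prop:SurfaceBundles]{Proposition~\ref*{prop:SurfaceBundles}}), so the NPC machinery is unavailable.

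The paper's \hyperref[prop:SeifertPiece]{Proposition~\ref*{prop:SeifertPiece}} uses a different infinite-degree cover: the cover $\hat M$ corresponding to the subgroup $\pi_1(\hat N) \subset \pi_1(M)$, where $\hat N$ is a finite cover of a Seifert piece of hyperbolic type that is a circle bundle over a compact hyperbolic surface $S_0$ with boundary. Since every non-geometric aspherical graph manifold has such a piece (if all pieces were of euclidean type, $M$ would be $\Sol$-geometric), this is available. By Simon's tameness theorem, $\hat M$ is homeomorphic to $\interior(\hat N)$ and deformation retracts onto $\hat N$, so the identifications of \hyperref[ssec:Tame]{\S\ref*{ssec:Tame}} convert the triple $S_0, F_1, F_2$ (base surface and two vertical tori with a single triple intersection point in $\hat N$) into classes $\bssigma_0 \in H^1_\Fsupp(\hat M;\bbZtwo)$ and $\bsvarphi_1, \bsvarphi_2 \in H^1_\Ksupp(\hat M;\bbZtwo)$ with nonvanishing cup product in $H^3_\Ksupp(\hat M;\bbZtwo)$. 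Crucially, this cohomology lives on $\hat N$, not on $M$; the cover \emph{creates} the cup product structure rather than merely reproducing what is already on $M$. That is the insight your proposal misses, and it is exactly where \hyperref[thm:Open]{Theorem~\ref*{thm:Open}} earns its keep.
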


For a geometric aspherical manifold $M$, the isosystolic inequality is already established in \hyperref[thm:Geom]{Theorem~\ref*{thm:Geom}} (for geometric 3-manifolds). Hence, we focus on non-geometric aspherical manifolds. These manifolds necessarily have a non-trivial \emph{JSJ decomposition}, with at least one \emph{JSJ piece} which is hyperbolic or Seifert fibered. We consider these two cases separately; manifolds with a hyperbolic piece is treated in \hyperref[prop:HyperbolicPiece]{Proposition~\ref*{prop:HyperbolicPiece}}, and manifolds with a Seifert piece is treated in \hyperref[prop:SeifertPiece]{Proposition~\ref*{prop:SeifertPiece}}. Immediately from these propositions, \hyperref[thm:Asph]{Theorem~\ref*{thm:Asph}} follows.

\subsection{JSJ decomposition} \label{ssec:JSJ}

In 1970's, a canonical decomposition of closed aspherical 3-manifolds was discovered by Jaco and Shalen \cite{Jaco-Shalen:JSJ}, and indepenently by Johannson \cite{Johannson:JSJ}. We briefly review this decomposition, now known the \emph{JSJ decomposition}. For brevity, let us restrict ourselves to orientable 3-manifolds. 

A version for an orientable closed aspherical 3-manifold $M$ that we consider here, also known as the \emph{torus decomposition}, gives a decomposition of $M$ into \emph{JSJ-pieces} by cutting $M$ along a unique canonical (possibly empty) collection of pairwise non-isotopic and pairwise disjoint incompressible tori, which we refer to as \emph{JSJ-tori}, so that each JSJ-piece is either a Seifert fibered space or an atoroidal manifold. Hence, by the resolution of Geometrization Conjecture, each JSJ-piece is either a Seifert fibered space or a manifold whose interior admits a hyperbolic structure; we simply call each JSJ-piece a Seifert piece or a hyperbolic piece respectively. The pairwise non-isotopic condition on JSJ-tori in this version implies that we don't use $T^2 \times I$ as a JSJ-piece unless $M$ is a $T^2$-bundle. The collection of JSJ-tori can be uniquely characterized as the \emph{minimal} collection of tori which decomposes $M$ into such pieces. In general, there are many vertical non-peripheral incompressible tori within Seifert pieces, but these do not appear as one of JSJ-tori.

Since $M$ is aspherical, each Seifert piece admits one of the four aspherical Seifert fibered geometries, i.e. $\Euc^3\nts$, $\Nil$, $\HR$, $\SLtwo$. As before, aspherical Seifert fibered spaces are said to be euclidean type or hyperbolic type depending on the geometry of the base 2-orbifolds; in other words, it is euclidean type if it has $\Euc^3$ or $\Nil$ geometries, and it is hyperbolic type if it has $\HR$ or $\SLtwo$ geometries.

If the collection of JSJ-tori is empty, then the closed manifold $M$ makes up a single JSJ-piece, and hence it is a closed aspherical Seifert fibered space (with geometries $\Euc^3\nts$, $\Nil$, $\HR$, or $\SLtwo$) or a closed hyperbolic manifolds (with geometry $\Hyp^3$). If the collection of JSJ-tori is non-empty, each JSJ-piece has at least one boundary torus. A Seifert piece with non-empty boundary tori is a circle-bundle over a 2-orbifold with non-empty boundary. For these Seifert pieces, just as for circle-bundles over 2-manifold with boundary, a section always exists, i.e. there is a horizontal surface transverse to all fibers. It follows that the geometry is $\Euc^3$ if it is of euclidean type, and $\HR$ if it is of hyperbolic type.

Among the Seifert pieces with non-empty boundary, the ones of euclidean type are quite special while the ones of hyperbolic type are generic. Let us describe these pieces of euclidean type. There are only three 2-orbifolds with $\Euc^2$ geometry with non-empty boundary, which are (i) an annulus, (ii) a M\"obius band, and (iii) a disk with two elliptic points of order 2; the corresponding orientable Seifert fibered spaces are (i) $T^2 \times I$, (ii) $K \ltimes I$, and (iii) $K \ltimes I$ respectively, where $K \ltimes I$ denotes the twisted $I$-bundle over a Klein bottle. Here, (ii) and (iii) give the homeomorphic underlying manifolds, but with different Seifert fibering. It is easy to see that (i) has $\Euc^3$ geometry; furthermore, since (ii) and (iii) are both covered by (i), they also have $\Euc^3$ geometry. Hence, there are only two Seifert pieces of euclidean type with boundary, up to homeomorphism.

We have only mentioned five aspherical geomtries ($\Euc^3\nts$, $\Nil$, $\HR$, $\SLtwo$, and $\Hyp^3$) explicitly in the above discussion so far. Aspherical manifolds with the last aspherical geometry, i.e. $\Sol$ geometry, are rather peculiar in the context of the JSJ decomposition. If $M$ admits $\Sol$ geometry, it is either a $T^2$-bundle over the circle or a $T^2$-semibundle over the interval; there is a single JSJ-torus, given by a fiber torus, which decomposes $M$ into a single copy of $T^2 \times I$ or two copies of $K \ltimes I$, repsectively. So, JSJ-pieces of $M$ consist only of Seifert pieces of euclidean type. The converse is also true. Suppose that JSJ-pieces of $M$ consist only of Seifert pieces of euclidean type. If $T^2 \times I$ appears as a JSJ-piece, then the two boundary tori must be identified since JSJ-tori must be pairwise non-isotopic by definition, and hence $M$ must be a $T^2$-bundle over the circle. If copies of $K \ltimes I$ appear as the only JSJ-pieces, $M$ must be a union of precisely two copies of $K \ltimes I$, and hence $M$ is a $T^2$-semibundle over the interval. In either case, $M$ must admit $\Nil$, $\Euc^3\nts$, or $\Sol$ geometry. However, if $M$ admits one of the Seifert geometries, the fiber is not a JSJ-torus by definition; hence $M$ must admit $\Sol$ geometry.

The isosystolic inequality \hyperref[eqn:Asph]{(\ref*{eqn:Asph})} for geometric manifolds is already established in \hyperref[thm:Geom]{Theorem~\ref*{thm:Geom}}. Our goal for the rest of this section is to establish the inequality for non-geometric aspherical manifolds. From the discussion above, the JSJ decomposition of non-geometric aspherical manifolds must be non-trivial, and the collection of JSJ-pieces must contain a hyperbolic piece or a Seifert piece of hyperbolic type.

A closed aspherical manifold $M$ is said to be a \emph{graph manifold} if all JSJ-pieces are Seifert pieces. A graph-manifold is said to be a \emph{trivial} graph-manifold if the collection of JSJ-tori is empty, and is said to be a \emph{non-trivial} graph-manifold otherwise. Trivial graph-manifolds are just aspherical Seifert fibered spaces. Non-trivial graph-manifolds admit $\Sol$ geometry if JSJ-pieces are all euclidean type.

\subsection{Hyperbolic Piece} \label{ssec:HyperbolicPiece}

We first consider non-geometric aspherical manifolds with at least one hyperbolic piece in the JSJ decomposition. The proof below relies on the work of Przytycki and Wise \cite{Przytycki-Wise} on non-positively curved manifolds; an alternative proof is sketched at the end of \hyperref[ssec:SeifertPiece]{\S\ref*{ssec:SeifertPiece}}.

\begin{prop} \label{prop:HyperbolicPiece}
Let $M$ be a closed aspherical 3-manifold. Suppose that $M$ is not geometric and that it has at least one hyperbolic piece in its JSJ decomposition. Then, for any riemannian metric $g$ on $M$,
\begin{align*}
\Sys(M,g)^3 &\leq 6 \, \Vol(M,g).
\end{align*}
\end{prop}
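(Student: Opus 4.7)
The plan is to reduce the proposition to Proposition~\ref{prop:SurfaceBundles} by exhibiting a finite-degree cover of $M$ that fibers over $S^1$ with fiber other than $S^2$. The key geometric input is a virtual fibering theorem for closed aspherical 3-manifolds that possess at least one hyperbolic piece in their JSJ decomposition.

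First I would invoke the virtual fibering theorem in two cases. When $M$ itself is a closed hyperbolic 3-manifold (its JSJ decomposition is trivial and consists of a single hyperbolic piece), Agol's Virtual Fibering Theorem~\cite{Agol:VHC} provides a finite-degree cover $\hat M \to M$ that fibers over $S^1$. When the JSJ decomposition of $M$ is nontrivial and contains at least one hyperbolic piece, $M$ admits a non-positively curved metric by Leeb, and the Przytycki--Wise theorem on mixed 3-manifolds~\cite{Przytycki-Wise} (together with Agol's RFRS criterion) provides a finite-degree cover $\hat M$ that fibers over $S^1$. In either case we obtain $\hat M$ homeomorphic to a surface bundle over $S^1$.

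Next I would check that the fiber $F$ of the fibration $\hat M \to S^1$ is not $S^2$. Since $\hat M$ is a finite-degree cover of the aspherical manifold $M$, $\hat M$ is itself aspherical; on the other hand, any $S^2$-bundle over $S^1$ has universal cover $S^2 \times \bbsR$, which is not contractible. Thus $F$ is a closed surface other than $S^2$, and Proposition~\ref{prop:SurfaceBundles} applies to $M$ with the cover $\hat M$, yielding the desired inequality $\Sys(M,g)^3 \leq 6\,\Vol(M,g)$.

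The main obstacle is the virtual fibering step itself, which is a deep input from recent 3-manifold topology rather than something one would hope to establish from scratch here. Once a finite fibered cover with non-spherical fiber is available, all the geometric content of the inequality has already been packaged into Proposition~\ref{prop:SurfaceBundles}, which itself deduces the bound from \hyperref[thm:Closed]{Theorem~\ref*{thm:Closed}} by constructing three non-separating surfaces with a single triple intersection point in a further cyclic cover of $\hat M$.
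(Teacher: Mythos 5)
Your proof follows essentially the same route as the paper: apply Leeb's theorem to obtain a non-positively-curved metric, feed that into Przytycki--Wise plus Agol's RFRS criterion to get virtual fibering, and then reduce to Proposition~\ref{prop:SurfaceBundles} (the paper packages this as Proposition~\ref{prop:NPC}, of which Proposition~\ref{prop:HyperbolicPiece} is a special case). The only slack is your first case: since the hypothesis explicitly excludes geometric $M$, the situation where $M$ is itself a closed hyperbolic manifold cannot occur, so invoking Agol's Virtual Fibering Theorem there is harmless but redundant.
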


\hyperref[prop:HyperbolicPiece]{Proposition~\ref*{prop:HyperbolicPiece}} is a special case of \hyperref[prop:NPC]{Proposition~\ref*{prop:NPC}} below, which establishes the isosystolic inequality more general class of closed \emph{non-positively curved} manifolds. We adapt a convention that a closed manifold is said to be \emph{non-positively curved} if it \emph{admits} a riemannian metric of non-positive sectional curvature. Such a manifold is always aspherical by Cartan-Hadamard theorem; see, for example, \cite{doCarmo}. Let us first provide some backgrounds on closed non-positively curved 3-manifolds.

A closed geometric aspherical 3-manifold is non-positively curved if and only if it admits $\Euc^3\nts$, $\HR$, or $\Hyp^3$ geometry \cite{Gromoll-Wolf}. Among a closed non-geometric aspherical 3-manifolds, non-positively curved manifolds are common; the work of Leeb \cite{Leeb} shows that any closed aspherical 3-manifold with at least one hyperbolic piece in the JSJ decomposition is non-positively curved. There are non-positively curved non-trivial graph manifolds as well; however, it should be emphasized that there are also non-trivial graph manifolds that are not non-positively curved \cite{Leeb}.

The relevance of non-positively curved manifolds in our discussion of isosystolic inequality is that they are now known to fiber virtually over the circle. For closed non-positively curved graph-manifolds, this fact was shown by Svetlov \cite{Svetlov}; see \cite{Wang-Yu} and \cite{Liu} for related results. For closed non-geometric non-positively curved manifolds with at least one hyperbolic piece in the JSJ decomposition, this fact can be deduced as follows. First, Przytycki and Wise \cite{Przytycki-Wise} recently announced that, for such a manifold $M$, $\pi_1(M)$ is \emph{virtually special}, i.e. $\pi_1(M)$ virtually injects into a finitely generated right-angled Artin group. Second, a finitely generated right-angled Artin group injects into a finitely-generated right-angled Coxeter groups; see \cite{Hsu-Wise:Graph}, \cite{Davis-Januszkiewicz}. Third, Agol \cite{Agol:RFRS} showed that finitely-generated right-angled Coxeter groups are virtually \emph{residually finite rationally solvable} (RFRS); so, if $\pi_1(M)$ is virtually special, it is virtually RFRS. Fourth, and most importantly, Agol \cite{Agol:RFRS} showed that a closed aspherical 3-manifold $M$ fibers virtually over the circle if $\pi_1(M)$ is virtually RFRS.

\begin{prop} \label{prop:NPC}
Let $M$ be a closed aspherical 3-manifold. Suppose that $M$ is not geometric and that it is non-positively curved, e.g. $M$ has at least one hyperbolic piece in its JSJ decomposition. Then, for any riemannian metric $g$ on $M$,
\begin{align*}
\Sys(M,g)^3 &\leq 6 \, \Vol(M,g).
\end{align*}
\end{prop}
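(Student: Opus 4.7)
The plan is to reduce \hyperref[prop:NPC]{Proposition~\ref*{prop:NPC}} to \hyperref[prop:SurfaceBundles]{Proposition~\ref*{prop:SurfaceBundles}} by producing a finite-degree cover $\hat M$ of $M$ that is a surface bundle over $S^1$ with fiber not homeomorphic to $S^2$. Once such a cover is in hand, the isosystolic inequality $\Sys(M,g)^3 \leq 6 \, \Vol(M,g)$ is immediate. The $S^2$-fiber possibility is excluded automatically: since $M$ is aspherical, so is any finite cover $\hat M$, and the long exact homotopy sequence of the fibration $F \hookrightarrow \hat M \to S^1$ then forces $\pi_2(F)=0$, ruling out $F \cong S^2$.

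To produce the virtual fibering, I would split along the JSJ decomposition into two cases. In the first case, $M$ has at least one hyperbolic JSJ piece, and I would assemble the chain of results recalled in the text preceding the proposition: by Przytycki and Wise \cite{Przytycki-Wise}, $\pi_1(M)$ is virtually special, so it virtually embeds in a right-angled Artin group; such groups embed in right-angled Coxeter groups by \cite{Hsu-Wise:Graph} and \cite{Davis-Januszkiewicz}; by Agol \cite{Agol:RFRS} right-angled Coxeter groups are virtually RFRS, so $\pi_1(M)$ is virtually RFRS as well; and finally Agol's virtual fibering criterion \cite{Agol:RFRS} shows that a closed aspherical 3-manifold with virtually RFRS fundamental group has a finite-degree cover that fibers over $S^1$. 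In the second case, $M$ has no hyperbolic JSJ piece, so $M$ is a graph manifold; since $M$ is non-positively curved by hypothesis, the required virtual fibering is supplied directly by Svetlov's theorem \cite{Svetlov}.

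In both cases one obtains a finite-degree cover $\hat M \to M$ that is a surface bundle over $S^1$ with fiber $F \not\cong S^2$, and \hyperref[prop:SurfaceBundles]{Proposition~\ref*{prop:SurfaceBundles}} then yields the desired inequality. The main obstacle lies entirely outside of the present article: it is encapsulated in the deep virtual fibering technology of Przytycki-Wise, Agol, and Svetlov, which we use as a black box. Once these are accepted, the remaining work in the proof--the case split on the JSJ decomposition, the asphericity argument ruling out $F \cong S^2$, and the appeal to \hyperref[prop:SurfaceBundles]{Proposition~\ref*{prop:SurfaceBundles}}--is essentially bookkeeping.
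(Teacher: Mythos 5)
Your proof is correct and follows exactly the same route as the paper: reduce to Proposition~\ref{prop:SurfaceBundles} via virtual fibering, using the Przytycki--Wise / Agol chain when there is a hyperbolic JSJ piece and Svetlov's theorem for non-positively curved graph manifolds. Your explicit check that the fiber cannot be $S^2$ (via asphericity and the long exact homotopy sequence) is a small detail the paper leaves implicit, but it is a worthwhile addition.
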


\begin{proof}
Since non-positively curved manifolds fibers virtually over the circle, the isosystolic inequality follows immediately from \hyperref[prop:SurfaceBundles]{Propsition~\ref{prop:SurfaceBundles}}.
\end{proof}

\subsection{Tame Manifolds} \label{ssec:Tame}

Before we proceed, let us make an observations on homology and cohomology of a \emph{tame} manifold. Recall that a non-compact manifold $M$ is said to be \emph{tame} if it is homeomorphic to the interior of a compact manifold $\bar M$ with boundary $\partial \bar M$. The inclusion $i:M \hookrightarrow \bar M$ is a homotopy equivalence, inducing isomorphsims
$i_* : H_*^\Ksupp(M;\bbA) \stackrel{\cong}{\longrightarrow} H_*(\bar M;\bbA)$ and
$i^* : H^*(\bar M;\bbA) \stackrel{\cong}{\longrightarrow} H^*_\Fsupp(M;\bbA)$.
On the other hand, $\bar M$ embeds in $M$ homeomorphically onto $\bar M \ssminus N \subset M$ where $N$ is a regular neighborhood of $\partial \bar M$ in $\bar M$; so, we have an embedding of a pair $j:(\bar M, \partial \bar M) \rightarrow (M, M \cap \bar N)$, where $\bar N$ is the closure of $N$. Taking the suitable direct limits, we obtain canonical isomorphisms
$j_* : H_*(\bar M,\partial \bar M;\bbA) \stackrel{\cong}{\longrightarrow} H_*^\Fsupp(M;\bbA)$ and
$j^* : H^*_\Ksupp(M;\bbA) \stackrel{\cong}{\longrightarrow} H^*(\bar M, \partial \bar M;\bbA)$.
Now, we recall two versions of the Poincar\'e-Lefschetz duality: for any compact $n$-manifold $M$ with boundary, we have
\begin{align*}
\PLD & : H^k(\bar M,\partial \bar M;\bbZtwo) \stackrel{\cong}{\longrightarrow} H_{n-k}(\bar M;\bbZtwo), \\
\PLD & : H^k(\bar M;\bbZtwo) \stackrel{\cong}{\longrightarrow} H_{n-k}(\bar M, \partial \bar M;\bbZtwo),
\end{align*}
where the coefficient $\bbZtwo$ can be replaced with $\bbZ$ if $M$ is orientable. In this setting, Poincar\'e duality of a tame manifold $M$ naturally corresponds to Poincar\'e-Lefschetz duality of $\bar M$ in the sense that the following diagrams commute:
\begin{align*}
\begin{CD}
H^k_\Ksupp(M;\bbZtwo) @> \PD > \cong > H_{n-k}^\Ksupp(M;\bbZtwo)\\
@V j^* V \cong V @V \cong V i_* V \\
H^k(\bar M,\partial \bar M;\bbZtwo) @> \cong > \PLD > H_{n-k}(\bar M;\bbZtwo)
\end{CD}
\end{align*}
\begin{align*}
\begin{CD}
H^k_\Fsupp(M;\bbZtwo) @> \PD > \cong > H_{n-k}^\Fsupp(M;\bbZtwo)\\
@A i_* A \cong A @A \cong A j^* A \\
H^k(\bar M;\bbZtwo) @> \cong > \PLD > H_{n-k}(\bar M,\partial \bar M;\bbZtwo)
\end{CD}
\end{align*}

\begin{rem}
In both diagrams above, as usual, the coefficient $\bbZtwo$ can be replaced with $\bbZ$ if $M$ is orientable. We only utilize the correspondence with $\bbZtwo$ coefficient.
\end{rem}

\subsection{Seifert Piece} \label{ssec:SeifertPiece}

We now consider closed non-geometric aspherical manifolds with at least one Seifert piece of hyperbolic type in the JSJ decomposition. This class of manifolds include all non-geometric \emph{graph-manifolds}. 

\begin{prop} \label{prop:SeifertPiece}
Let $M$ be a closed aspherical 3-manifold. Suppose that $M$ is non-geometric and that it has at least one Seifert piece of hyperbolic type in its JSJ decomposition. Then, for any riemannian metric $g$ on $M$,
\begin{align} \label{eqn:SeifertPiece}
\Sys(M,g)^3 &\leq 6 \, \Vol(M,g).
\end{align}
\end{prop}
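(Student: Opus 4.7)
The plan is to apply \hyperref[thm:Open]{Theorem~\ref*{thm:Open}} to $M$ together with a suitable infinite-degree non-compact cover $\hat M$. Since \hyperref[prop:HyperbolicPiece]{Proposition~\ref*{prop:HyperbolicPiece}} settles the case of a hyperbolic JSJ piece, we may assume $M$ is a non-trivial graph manifold whose JSJ decomposition contains a hyperbolic-type Seifert piece $P$. The first step is to pass to a finite cover $M'$ of $M$ so that (i) the lift of $P$ is a trivial circle bundle $\bar P \cong \Sigma \times S^1$ where $\Sigma$ is an orientable compact surface with $\chi(\Sigma) < 0$ and $\partial \Sigma \neq \nil$, (ii) every other Seifert piece of $M'$ likewise lifts to a trivial bundle, and (iii) there is a homomorphism $\bsvarphi \colon \pi_1(M') \to \bbZ$ sending the regular fiber $f$ of $\bar P$ to a generator. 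Condition (iii) is where the hyperbolic-type hypothesis $\chi(\Sigma) < 0$ is crucial: it allows $f$ to be made non-torsion in $H_1$ after a suitable further cover, a known virtual homological property of graph manifolds that is available here but generally fails for purely euclidean-type Seifert pieces.

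Let $p \colon \hat M \to M'$ be the infinite cyclic cover corresponding to $\bsvarphi$. The lift $\hat P := p^{-1}(\bar P) \subset \hat M$ is homeomorphic to $\Sigma \times \bbsR$, with the $S^1$ fiber unrolled to $\bbsR$. Choose a simple closed curve $\gamma \subset \Sigma$ with $[\gamma] \neq 0$ in $H_1(\Sigma; \bbZtwo)$ and a proper arc $\alpha \subset \Sigma$ (with endpoints in $\partial \Sigma$) meeting $\gamma$ transversely at a single point $q$. Inside $\hat P$ we then have three codimension-one subspaces
\begin{align*}
\Sigma_0 := \Sigma \times \{0\}, \qquad A_1 := \gamma \times \bbsR, \qquad A_2 := \alpha \times \bbsR,
\end{align*}
meeting transversely at the unique point $(q,0)$; note that $\Sigma_0$ is compact while $A_1$ and $A_2$ are properly embedded but non-compact in $\hat P$.

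The third step is to extend $\Sigma_0$, $A_1$, $A_2$ across the lifts of the JSJ tori into the neighboring lifted Seifert pieces so as to produce a closed compact embedded hypersurface $Z$ and two properly embedded hypersurfaces $\bar A_1, \bar A_2$ in $\hat M$, with the property that the triple intersection $Z \cap \bar A_1 \cap \bar A_2$ remains the single transverse point $(q,0)$. Each neighboring Seifert piece lifts in $\hat M$ either as a product $\Sigma' \times \bbsR$ (if its fiber is not annihilated by $\bsvarphi$) or as disjoint compact copies of $\Sigma' \times S^1$, and one caps off each boundary circle of $\Sigma_0$ using 2-chains in the adjacent lift while extending $A_1, A_2$ generically to avoid spurious intersections outside $\hat P$. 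Setting $\bszeta_1 := \PD(Z) \in H^1_\Ksupp(\hat M; \bbZtwo)$ and $\bszeta_2 := \PD(\bar A_1)$, $\bszeta_3 := \PD(\bar A_2) \in H^1_\Fsupp(\hat M; \bbZtwo)$, the $\bbZtwo$-count of the triple transverse intersection equals $1$, so $\bszeta_1 \cup \bszeta_2 \cup \bszeta_3$ is the generator of $H^3_\Ksupp(\hat M; \bbZtwo) \cong \bbZtwo$, and \hyperref[thm:Open]{Theorem~\ref*{thm:Open}} delivers the inequality \hyperref[eqn:SeifertPiece]{(\ref*{eqn:SeifertPiece})}. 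The main obstacle is simultaneously realizing condition (iii) in the first step and performing the extension in the third step without creating extra triple intersections; both rely on case-by-case analysis, driven by the type of each neighboring Seifert piece and the action of $\bsvarphi$ on the boundary and fiber classes of $\bar P$, to ensure that $\partial \Sigma_0$ can be capped off inside the adjacent lifted pieces.
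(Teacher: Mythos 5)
Your route differs substantially from the paper's, and it contains a genuine gap. The paper takes a much cleaner cover: it passes to a finite cover $\hat N$ of the hyperbolic-type Seifert piece $N$ (trivializing the circle bundle and arranging an orientable base surface of positive genus), then takes the cover $\hat M \to M$ corresponding to the \emph{infinite-index} subgroup $\pi_1(\hat N) \hookrightarrow \pi_1(M)$. By a theorem of Simon on tameness, $\hat M$ is homeomorphic to $\interior(\hat N)$ and deformation retracts onto $\hat N$, so all the (co)homology of $\hat M$ with closed or compact support is computed from the compact pair $(\hat N, \partial \hat N)$. The three required surfaces are then the base surface $S_0$ (whose proper extension gives a class in $H^1_\Fsupp(\hat M;\bbZtwo)$) and two vertical tori $F_1, F_2$ (giving classes in $H^1_\Ksupp(\hat M;\bbZtwo)$), all living inside the compact core $\hat N$; no extension across other JSJ pieces is needed.

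Your approach relies on an unestablished condition (iii): the existence of a finite cover $M'$ together with a homomorphism $\bsvarphi \colon \pi_1(M') \to \bbZ$ sending the Seifert fiber of $\bar P$ to a generator, i.e.\ the fiber must become non-torsion in $H_1(M';\bbZ)$. You call this a ``known virtual homological property,'' but this is precisely the kind of virtual-fibering-flavored statement that does \emph{not} hold for arbitrary graph manifolds — Liu and Svetlov characterize the graph manifolds that virtually fiber as the non-positively curved ones, and the proposition must cover graph manifolds that are not non-positively curved. Even granting (iii), you would still face the second gap you yourself flag: capping off $\partial \Sigma_0$ and extending $A_1, A_2$ through the lifted JSJ pieces (which may appear as infinitely many $\bbsR$-bundle strips alternating with compact copies) without introducing extra triple intersections is a genuinely delicate combinatorial problem, and ``case-by-case analysis'' is not a proof. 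The paper's use of the tame infinite-degree cover of a \emph{single} Seifert piece sidesteps both issues entirely, which is the real content of \hyperref[ssec:Tame]{\S\ref*{ssec:Tame}}.
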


\begin{proof}
We first prove the proposition for orientable manifold $M$. Let $N$ be a compact Seifert piece of hyperbolic type. By the non-geometric assumption, it has a non-empty boundary tori. Since $N$ is a Seifert piece of hyperbolic type, $N$ admits a finite-degree covering by a circle bundle over a base surface with $\Hyp^2$ geometry. If this base surface has genus 0, we can take a further finite-degree covering by a circle bundle over a base surface with positive genus and with $\Hyp^2$ geometry. Let us write $\hat N$ for such a cover, and $S_0$ for its base surface; $\hat N$ is a compact manifold such that $\partial \hat N \neq \nil$ is a union of tori, and $S_0$ is a compact surface with $\partial S_0 \neq \nil$. Then, we may follow the argument of \hyperref[prop:CircleBundles]{Proposition~\ref*{prop:CircleBundles}} and find vertical tori $F_1$, $F_2$, in $\hat N$ so that $S_0$, $F_1$, $F_2$ are three surfaces in $\hat N$ with a single triple intersection point; hence, they represent non-zero classes $\bsS_0 \in H_2(\hat N, \partial \hat N;\bbZtwo)$ and $\bsF_1, \bsF_2 \in H_2(\hat N;\bbZtwo)$ with non-zero triple algebraic intersection number.

We have injections $\pi_1(\hat N) \hookrightarrow \pi_1(N) \hookrightarrow \pi_1(M)$; the first injection has finite index, but the second injection always has infinite index. We take an infinite-degree cover $\hat M$ of $M$, corresponding to the image of $\pi_1(\hat N)$ in $\pi_1(M)$. Note that $\hat M$ is a non-compact manifold which contains an isometric copy of $\hat N$, such that the inclusion $\hat N \hookrightarrow \hat M$ is a homotopy equivalence. Furthermore, by the work of Simon \cite{Simon:Tame}, we know that $\hat M$ is a tame manifold, homeomorphic to $\interior(\hat N)$ and deformation retracting onto $\hat N$. In particular, we have $H_*^\Fsupp(\hat M;\bbZtwo) \cong H_*^\Fsupp(\interior (\hat N);\bbZtwo) \cong H_*(\hat N,\partial \hat N;\bbZtwo)$ and $H_*^\Ksupp(\hat M;\bbZtwo) \cong H_*^\Ksupp(\interior (\hat N);\bbZtwo) \cong H_*(\hat N;\bbZtwo)$

Now, let $\bsS'_0 \in H_2^\Fsupp(\hat M;\bbZtwo)$ and $\bsF'_1, \bsF'_2 \in H_2^\Ksupp(\hat M;\bbZtwo)$ be the classes corresponding to $\bsS_0 \in H_2(\hat N,\partial \hat N;\bbZtwo)$, $\bsF_1, \bsF_2 \in H_2(\hat N;\bbZtwo)$ via the above isomorphisms. Note that $\bsF'_1$ and $\bsF'_2$ are represented by $F_1$ and $F_2$. For $\bsS'_0$, we can find a representative $S_0'$ by stretching out the ends of the surface $S_0$ by reversing the deformation retract of $\hat M$ onto $\hat N$. Hence, we have surfaces $S'_0, F_1, F_2$ with a single triple intersection point, representing $\bsS'_0, \bsF'_1, \bsF'_2$ respectively. It follows that the Poincar\'e dual classes $\bssigma_0 \in H^1_\Fsupp(\hat M;\bbZtwo)$, $\bsvarphi_1, \bsvarphi_2 \in H^1_\Ksupp(M;\bbZtwo)$ of $\bsS'_0, \bsF'_1, \bsF'_2$ satisfy $\bssigma_0 \cup \bsvarphi_1 \cup \bsvarphi_2 \neq \bszero$ in $H^3_\Ksupp(\hat M;\bbZtwo)$. Hence, $\hat M$ satisfies the hypothesis of \hyperref[thm:Open]{Theorem~\ref*{thm:Open}}, and the isosystolic inequality \hyperref[eqn:SeifertPiece]{(\ref*{eqn:SeifertPiece})} follows.

Finally, if $M$ is non-orientable, we may first take an orientable double cover, and then follow the above argument to find $\hat M$ that satisfy the hypothesis of \hyperref[thm:Open]{Theorem~\ref*{thm:Open}}; the isosystolic inequality \hyperref[eqn:SeifertPiece]{(\ref*{eqn:SeifertPiece})} follows in this case as well.
\end{proof}

\begin{rem}
One can also prove \hyperref[prop:HyperbolicPiece]{Proposition~\ref*{prop:HyperbolicPiece}}, i.e. an isosystolic inequality for non-geometric aspherical manifold with a hyperbolic piece, by combining the arguments in the proof of \hyperref[prop:SurfaceBundles]{Proposition~\ref*{prop:SurfaceBundles}} and \hyperref[prop:SeifertPiece]{Proposition~\ref*{prop:SeifertPiece}}. We give a brief sketch here.

If a non-geometric aspherical manifold $M$ has a hyperbolic piece $N$ in its JSJ decomposition, we can lift it to a cover corresponding to this hyperbolic piece. This non-compact cover is tame by the work of Simon \cite{Simon:Tame}, so it is homeomorphic to the interior of $N$ and admits a hyperbolic metric. Then, by the work of Wise on Virtual Fibering Conjecture \cite{Wise:VFC} for cusped hyperbolic manifolds, we can further lift it to a cover which fibers over the circle. Taking a cyclic cover if necessary, we see that the original manifold $M$ is covered by a surface bundle $\hat M$ whose gluing map acts trivially on homology $H_1^\Ksupp(F;\bbZtwo)$ and $H_1^\Fsupp(F;\bbZtwo)$ of the fiber.

It remains to construct three surfaces in $\hat M$ with triple intersection point, representing non-zero homology classes and with at least one of them having compact support. We take one surface to be the non-compact fiber surface $F_0$. Two more surfaces $S_1$, $S_2$ can be constructed from cobordisms in $F \times I$. If the genus of $F$ is positive, we can construct closed surfaces $S_1,S_2$ from a pair of loops on $F$ with a single intersection point between them. If the genus of $F$ is zero, the fiber is a sphere with at least four punctures, and we can construct closed surface $S_1$ and a non-compact surface $S_2$ from a loop around a puncture and an arc between this puncture to another puncture. 
\end{rem}

\section{Essential Three-Manifolds} \label{sec:Ess3}

In this section, we establish our isosystolic inequality for \emph{most} closed 3-manifolds. Let us write $V_0:=S^3\nts$, $V_k:=\csum_k(S^2 \times S^1\ntts)$ and $V_{-k}:=\csum_k(S^2 \rtimes S^1\ntts)$ for $k>0$, where $\csum_k$ denotes the $k$-times repeated connected sum.

\begin{thm} \label{thm:Ess}
Let $M$ be a closed 3-manifold. Suppose that $M$ is not $V_k$ or a connected sum $L(p_1,q_1)\csum \cdots \csum L(p_m,q_m) \csum V_k$ with odd orders $p_1, \cdots, p_m$ for some integer $k$. Then, for any riemannian metric $g$ on $M$,
\begin{align*}
\Sys(M,g)^3 &\leq 6 \, \Vol(M,g).
\end{align*}
\end{thm}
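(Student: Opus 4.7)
The plan is to reduce \hyperref[thm:Ess]{Theorem~\ref*{thm:Ess}} to the previously established \hyperref[thm:Asph]{Theorem~\ref*{thm:Asph}} and \hyperref[thm:CSF]{Theorem~\ref*{thm:CSF}} by combining the Kneser-Milnor prime decomposition with a pinch-map pullback construction. Writing $M = P_1 \csum \cdots \csum P_m \csum V_k$ where each irreducible prime $P_i$ is distinct from $S^2 \times S^1$ and $S^2 \rtimes S^1$, by Geometrization each $P_i$ is either aspherical or a spherical space form $S^3/\Gamma_i$ for a non-trivial finite $\Gamma_i \leq SO(4)$ acting freely on $S^3$. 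The only finite subgroups acting freely on $S^3$ that avoid the antipodal involution $-I$ are the odd-order cyclic groups, since every binary polyhedral group and every even-order cyclic group contains $-I$; hence the excluded family of the theorem coincides precisely with those $M$ whose only irreducible prime summands, if any, are odd-order lens spaces. Thus, for $M$ outside this family, some prime summand $P := P_i$ is either aspherical or satisfies $-I \in \Gamma_i$.

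Next I would use the standard pinch map $\pi : M \to P$ which collapses each summand other than $P$ to a point at its connect-sum sphere. This $\pi$ has $\bbZtwo$-degree one (the preimage of a generic $y \in P$ is a single point), is proper, and induces on $\pi_1$ the projection $\pi_* : \pi_1(P_1) \ast \cdots \ast \pi_1(P_m) \ast F_k \twoheadrightarrow \pi_1(P)$. Given any cover $\hat P \to P$ corresponding to a subgroup $H \subset \pi_1(P)$, I form the pullback cover $\hat M := M \times_P \hat P \to M$. The surjectivity of $\pi_*$ ensures $\hat M$ is connected; the degree of $\hat M \to M$ equals that of $\hat P \to P$; and the lifted map $\hat \pi : \hat M \to \hat P$ remains proper (inherited from compactness of $M$) and of $\bbZtwo$-degree one, while respecting both the $\Fsupp$ and $\Ksupp$ support families.

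With $\hat M$ in hand, the argument splits in two cases. When $P$ is aspherical, the proofs of \hyperref[prop:SurfaceBundles]{Proposition~\ref*{prop:SurfaceBundles}}, \hyperref[prop:HyperbolicPiece]{Proposition~\ref*{prop:HyperbolicPiece}}, and \hyperref[prop:SeifertPiece]{Proposition~\ref*{prop:SeifertPiece}} deliver a cover $\hat P$ of $P$ (possibly of infinite degree and non-compact) together with three classes $\bszeta_1, \bszeta_2, \bszeta_3$ in $H^1_\Fsupp(\hat P; \bbZtwo) \cup H^1_\Ksupp(\hat P; \bbZtwo)$, at least one in $H^1_\Ksupp$, satisfying $\bszeta_1 \cup \bszeta_2 \cup \bszeta_3 \neq \bszero$ in $H^3_\Ksupp(\hat P; \bbZtwo)$. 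When $P = S^3/\Gamma$ with $-I \in \Gamma$, I instead take $\hat P := \RP^3 = S^3/\langle -I \rangle$ as a finite cover of $P$ and use the generator $\bszeta \in H^1(\RP^3; \bbZtwo)$ whose self-cube is the top class. In either case, pulling back along the proper $\bbZtwo$-degree-one map $\hat \pi$ preserves each support type, commutes with cup products, and induces an isomorphism on $H^3_\Ksupp(\,\cdot\,; \bbZtwo) \cong \bbZtwo$. Hence $\hat \pi^* \bszeta_1 \cup \hat \pi^* \bszeta_2 \cup \hat \pi^* \bszeta_3 \neq \bszero$ in $H^3_\Ksupp(\hat M; \bbZtwo)$, and \hyperref[thm:Closed]{Theorem~\ref*{thm:Closed}} (when $\hat P$ is a finite cover) or \hyperref[thm:Open]{Theorem~\ref*{thm:Open}} (when $\hat P$ is non-compact) applied to $\hat M \to M$ yields the desired $\Sys(M,g)^3 \leq 6 \, \Vol(M,g)$.

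The main obstacle I expect is the careful verification that the pinch-pullback construction interacts correctly with compactly supported cohomology: that the lifted pinch map $\hat \pi$ indeed remains proper when $\hat P$ is non-compact, that $\hat \pi^*$ preserves both $H^1_\Fsupp$ and $H^1_\Ksupp$, and that it induces an isomorphism on $H^3_\Ksupp$. Non-orientability of $M$, should some prime $P_i$ be non-orientable or should $V_k$ comprise $S^2 \rtimes S^1$ summands, introduces no extra difficulty since we work throughout with $\bbZtwo$ coefficients and $\bbZtwo$-degree.
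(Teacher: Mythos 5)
Your approach differs from the paper's in how the cohomological criterion is transferred from a prime summand $P$ to $M$: you pull back along the $\bbZtwo$-degree-one pinch map $\pi: M \to P$, whereas the paper constructs the cover $\hat M$ of $M$ directly as a connected sum of $\hat P$ with copies of the complementary summand, and proves a Mayer--Vietoris lemma (\hyperref[lem:MV]{Lemma~\ref*{lem:MV}}) showing that this cover inherits the nonvanishing cup product. Both routes produce the same cover of $M$, namely the one associated to the subgroup $\pi_*^{-1}\bigl(\pi_1(\hat P)\bigr) \leq \pi_1(M)$, and the pinch-pullback mechanism you propose is sound: for a proper $\bbZtwo$-degree-one map between connected $n$-manifolds, pullback respects both $\Fsupp$- and $\Ksupp$-supported cohomology, commutes with cup products of mixed support, and is an isomorphism on $H^n_\Ksupp(\,\cdot\,;\bbZtwo) \cong \bbZtwo$. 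You only flag these facts as things to be checked rather than checking them, but they are all true and standard, so this is a genuinely different and arguably cleaner route than the paper's.

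However, there is a genuine gap in your case analysis. Your assertion that Geometrization forces every irreducible prime summand $P_i$ to be either aspherical or a spherical space form omits $\RP^2 \times S^1$, which is irreducible (it is prime and is not an $S^2$-bundle over $S^1$) yet has $\pi_2 \neq 0$ and infinite $\pi_1$, so it is neither aspherical nor a spherical quotient. It is, in fact, the unique closed irreducible 3-manifold with this property, and the theorem's hypothesis does not exclude it as a summand since it is not a lens space. The paper lists it explicitly as case (i-b) in \hyperref[ssec:Prime]{\S\ref*{ssec:Prime}} and verifies in the proof of \hyperref[thm:Geom]{Theorem~\ref*{thm:Geom}} for $\SR$ geometry that a fiber $\RP^2$ together with the two vertical tori over a pair of transverse projective lines give three surfaces with a single triple intersection point, so $\RP^2 \times S^1$ itself already satisfies the hypothesis of \hyperref[thm:Closed]{Theorem~\ref*{thm:Closed}}. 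Adding this third branch to your dichotomy (aspherical, or spherical with $-I \in \Gamma$, or $\RP^2 \times S^1$) closes the argument; without it, the proof is incomplete as written.
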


It should be noted that, among the closed 3-manifolds excluded in \hyperref[thm:Ess]{Theorem~\ref*{thm:Ess}}, the ones of the form $L(p_1,q_1)\csum \cdots \csum L(p_m,q_m) \csum V_k$ are essential, and hence some isosystolic inequality must holds by Gromov's \hyperref[thm:Gromov]{Theorem~\ref*{thm:Gromov}}; since $\bbZtwo$-homology is useless to study odd-order lens spaces, one must employ a different tools to establish a better isosystolic inequality for these manifolds. It is not hard to see that the remaining manifolds $V_k$, on the other hand, accommodates no isosystolic inequality.

After reviewing the prime decomposition for 3-manifolds in \hyperref[ssec:Prime]{\S\ref*{ssec:Prime}} and identifying the class of essential 3-manifolds explicitly in \hyperref[ssec:ConnSumEss]{\S\ref*{ssec:ConnSumEss}}, we prove \hyperref[thm:Ess]{Theorem~\ref*{thm:Ess}} in \hyperref[ssec:ConnSumIsosys]{\S\ref*{ssec:ConnSumIsosys}}. Our proof simply combines the results from previous sections with an observation on the relationship between the connected sum operation and the homological criterion in \hyperref[thm:Open]{Theorem~\ref*{thm:Open}}. We also record in \hyperref[ssec:ConnSumIsosys]{\S\ref*{ssec:ConnSumIsosys}} that a closed 3-manifold $M$ is essential only if some isosystolic inequality holds for $M$. This is the converse of Gromov's \hyperref[thm:Gromov]{Theorem~\ref*{thm:Gromov}} in dimension 3; for \emph{orientable} manifolds, the converse of \hyperref[thm:Gromov]{Theorem~\ref*{thm:Gromov}} has already been established by the work of Babenko \cite{Babenko:Asymptotic} in all dimensions. 

\subsection{Prime Decomposition} \label{ssec:Prime}

Recall that a closed 3-manifold $M$ is said to be \emph{prime} if a connected sum decomposition $M=P \csum Q$ implies $P\cong S^3$ or $Q \cong S^3\nts$, and it is said to be \emph{irreducible} if every embedded 2-sphere bounds a 3-ball. It is classically known that a closed prime 3-manifold is either (i) an irreducible manifold, (ii-a) $S^2 \times S^1\nts$, or (ii-b) $S^2 \rtimes S^1$; moreover, by the resolution of Geometrization Conjecture, we also know that a closed irreducible manifold is (i-a) an aspherical manifold, (i-b) $\RP^2 \times S^1\nts$, (i-c) a spherical space form other than $S^3\nts$, or (i-d) $S^3\nts$.

The \emph{prime decomposition} is a canonical decomposition of a compact 3-manifold \cite{Kneser}, \cite{Milnor:Prime}. For orientable closed 3-manifolds, the prime decomposition theorem states that every orientable closed 3-manifold $M$ can be expressed as a connected sum of (i) irreducible manifolds and (ii-a) copies of $S^2 \times S^1\nts$, and this expression is unique up to reordering. For non-orientable closed 3-manifolds, the prime decomposition theorem states that every non-orientable closed 3-manifold $M$ can be expressed as a connected sum of (i) irreducible manifolds and (ii-b) copies of $S^2 \rtimes S^1\nts$, and this expression is unique up to reordering as long as we insist that each summand is either an irreducible manifold or $S^2 \rtimes S^1$; we remark that the restrictions on summands is necessary in non-orientable case, since $\RP^3\csum(S^2 \times S^1\ntts)=S^2 \rtimes S^1\nts$.

\subsection{Connected Sum and Essential Manifolds} \label{ssec:ConnSumEss}

Suppose that $M$ is a closed $n$-manifold. Let us first recall that, if $q:M \rightarrow Q$ is a degree-1 map and $Q$ is essential, then $M$ is also essential \cite[\S0]{Gromov:Filling}. More specifically, if $K$ is an aspherical space and a map $f: Q \rightarrow K$ represents a non-trivial class of $K$, i.e. $f_*[Q]$ defines a non-zero homology class in $K$, then $f_*q_*[M]=\pm f_*[Q]$ is non-zero. In particular, taking a degree-1 map $q: Q \csum R \rightarrow Q$ that collapses the summand $R$, we have the following observation \cite[\S0]{Gromov:Filling}.

\begin{obsalpha}[Gromov] \label{obs:ConnSum}
If $M$ is a closed $n$-manifold such that $M=Q \csum R$ and $Q$ is essential, then $M$ is also essential.
\end{obsalpha}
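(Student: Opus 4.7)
The plan is essentially to formalize the argument already sketched in the paragraph preceding the statement: exhibit a natural degree-one collapsing map $M \to Q$ and push the class detecting the essentiality of $Q$ back to $M$.

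First, I would construct the collapsing map $q: M \to Q$ from the connected sum decomposition. Recall that $M = Q \csum R$ is assembled by excising open $n$-balls $B_Q \subset Q$ and $B_R \subset R$ and identifying the boundary spheres $\partial \bar B_Q$ and $\partial \bar B_R$. Define $q$ to agree with the identity on $Q \setminus B_Q \subset M$, and to send the complementary compact piece $R \setminus \interior B_R \subset M$ onto $\bar B_Q$ by any continuous extension that is the identification $\partial \bar B_R \to \partial \bar B_Q$ on the gluing sphere (for instance, by collapsing $R \setminus \interior B_R$ radially onto $\bar B_Q$). Since $q$ restricts to a homeomorphism from the open dense set $M \setminus (R \setminus \interior B_R)$ onto $Q \setminus \bar B_Q$, the local-degree formula at a regular value in $Q \setminus \bar B_Q$ yields $q_*[M] = \pm [Q]$ in $H_n(Q;\bbA)$, where $\bbA = \bbZ$ if $M$ is orientable and $\bbA = \bbZtwo$ otherwise.

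Next, I would invoke the essentiality of $Q$: by hypothesis there exist an aspherical space $K$ and a map $f: Q \to K$ such that $f_*[Q] \neq 0$ in $H_n(K;\bbA)$. The composition $f \circ q: M \to K$ then satisfies
\[
(f \circ q)_*[M] \;=\; f_*\bigl(q_*[M]\bigr) \;=\; \pm\, f_*[Q] \;\neq\; 0
\]
in $H_n(K;\bbA)$, which exhibits $M$ as essential through $f \circ q$.

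No step poses a genuine obstacle; the only bookkeeping required is checking that the orientability conventions for $M$ and $Q$ are consistent, so that the fundamental classes and coefficient ring $\bbA$ match on both sides. This is automatic, since a connected sum $Q \csum R$ is orientable if and only if both summands are orientable, so the coefficient $\bbA$ prescribed for $M$ coincides with the one prescribed for $Q$; in the non-orientable case, the use of $\bbZtwo$-coefficients trivializes any sign ambiguity in the degree computation.
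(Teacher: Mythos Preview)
Your argument follows exactly the approach sketched in the paper: construct the degree-one collapse $q\colon M \to Q$ and compose with the map $f\colon Q \to K$ witnessing the essentiality of $Q$, so that $(f\circ q)_*[M]=\pm f_*[Q]\neq 0$.

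There is one slip in your bookkeeping. You claim the coefficient ring $\bbA$ prescribed for $M$ always coincides with that for $Q$, but this is false when $Q$ is orientable and $R$ is not: then $M = Q \csum R$ is non-orientable (so the definition forces $\bbA_M = \bbZtwo$) while $Q$ is orientable (so $\bbA_Q = \bbZ$). In that mixed case the argument would need $Q$ to be $\bbZtwo$-essential, which does not follow automatically from $\bbZ$-essentiality---for instance, an odd-order lens space $L(p,q)$ is $\bbZ$-essential but has $H_3(B\bbZp;\bbZtwo)=0$, so $f\circ q$ cannot detect a nonzero $\bbZtwo$-class. The paper's own sketch does not address this case either, so your argument is no weaker than the original; but the explicit assertion that the coefficients always match is incorrect and should be dropped or qualified.
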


\begin{rem}
More generally, if $q:M \rightarrow Q$ is a degree-$d$ map with $d \neq 0$ and $Q$ is essential, then it is \emph{often} true that $M$ is also essential; for $M$ to be essential, it is necessary and sufficient that the order of $f_*[Q]$ in homology of $K(\pi_1(Q),1)$ does not divide the degree $d$ of the map $q$. It should be noted that this restriction is necessary since there are examples of essential manifolds $Q$ with an inessential finite-degree cover $M$; examples exist in all dimensions ($Q=\RP^n$ and $M=S^n$), and there are infinite number of examples already in 3-dimensions ($Q=L(p,q)$ and $M=S^3$). It is worthwhile to note that there are also examples of an essential manifold $Q$ with an inessential finite-degree cover $M$ with non-trivial $\pi_1(M)$; for example, if $Q'$ is an essential manifold with a compact universal cover $M'$, then $Q=Q' \times S^1$ is an essential manifold with an inessential finite-degree cover $M=M' \times S^1$ with $\pi_1(M) \cong \bbZ$. There are also less common examples of an essential manifold $Q$ that doesn't fiber over $S^1$ with an inessential finite-degree cover $M$; for example, $Q=\RP^3 \csum \RP^3$ is an essential manifold with an inessential double cover $M=S^2 \times S^1\nts$.
\end{rem}

In dimension 2, a closed surface is essential if and only if it is not $S^2\nts$. Using \hyperref[obs:ConnSum]{Observation~\ref*{obs:ConnSum}}, we can articulate which closed 3-manifolds are essential. Recall that we write $V_0=S^3\nts$, $V_k=\csum_k(S^2 \times S^1\ntts)$ and $V_{-k}=\csum_k(S^2 \rtimes S^1\ntts)$ for $k>0$.

\begin{prop} \label{prop:Ess1}
A closed 3-manifold $M$ is essential if and only if $M \not \in \{ V_k \}_{k \in \bbZ}$.
\end{prop}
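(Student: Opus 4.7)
My plan is to prove both directions, with the forward direction---each $V_k$ is inessential---being straightforward via a fundamental-group computation. First I would note that iterated van Kampen across the prime decomposition of $V_k$ gives $\pi_1(V_k) \cong F_{|k|}$, the free group of rank $|k|$, so $K(\pi_1(V_k), 1)$ is homotopy equivalent to a wedge of $|k|$ circles and hence has vanishing $H_3$ in any coefficient ring. Since every map from $V_k$ to an aspherical space $K$ factors up to homotopy through the classifying map $V_k \to K(\pi_1(V_k), 1)$, the image of the fundamental class in $H_3(K; \bbA)$ must vanish.

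For the converse, I would suppose $M \not\in \{V_k\}_{k \in \bbZ}$ and apply the prime decomposition theorem in normal form (recalled in \hyperref[ssec:Prime]{\S\ref*{ssec:Prime}}): each summand of $M$ is either irreducible or a copy of $S^2 \times S^1$ (orientable case) or $S^2 \rtimes S^1$ (non-orientable case). If every irreducible summand were homeomorphic to $S^3$, then $M$ itself would equal some $V_k$; so at least one irreducible summand $P$ is not homeomorphic to $S^3$. By \hyperref[obs:ConnSum]{Observation~\ref*{obs:ConnSum}}, it then suffices to verify that $P$ itself is essential.

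By the resolution of the Geometrization Conjecture (recalled in \hyperref[sec:Geom3]{\S\ref*{sec:Geom3}}), a closed irreducible $3$-manifold distinct from $S^3$ belongs to exactly one of three families: (i-a) aspherical manifolds, (i-b) $\RP^2 \times S^1$, or (i-c) spherical space forms $S^3/\Gamma$ with $\Gamma$ finite and non-trivial. Case (i-a) is essential tautologically via the identity map $P \to K(\pi_1(P), 1) = P$. In case (i-b), I would use the natural inclusion $\RP^2 \times S^1 \hookrightarrow \RP^\infty \times S^1 = K(\pi_1(\RP^2 \times S^1), 1)$, which sends $[\RP^2 \times S^1]_{\bbZtwo}$ to the non-zero element of $H_2(\RP^\infty; \bbZtwo) \otimes H_1(S^1; \bbZtwo) \subset H_3(\RP^\infty \times S^1; \bbZtwo)$ by K\"unneth. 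In case (i-c), since $P$ is orientable with $\pi_2(P) = 0$, the classifying map $f: P \to K(\Gamma, 1)$ is a $3$-equivalence and hence surjective on $H_3(\,\cdot\,;\bbZ)$; the target $H_3(K(\Gamma, 1); \bbZ) \cong \bbZ/|\Gamma|$ (by the periodic cohomology of a finite group acting freely on $S^3$) is non-trivial, so $f_*[P]_{\bbZ} \neq 0$.

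The hardest case will be (i-c), which rests on the periodic-cohomology computation for a free $\Gamma$-action on $S^3$. A secondary subtlety is that \hyperref[obs:ConnSum]{Observation~\ref*{obs:ConnSum}} must be applied with care to pair the coefficient ring detecting $P$'s essentiality against the coefficient ring dictated by the orientability of $M$; when $M$ is non-orientable but $P$ is orientable (e.g.\ $P$ is an odd-order lens space), one may need to pass to the orientation double cover or invoke a twisted-coefficient variant to preserve non-vanishing of the fundamental class under mod-$2$ reduction.
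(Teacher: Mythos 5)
Your proof takes essentially the same route as the paper's: compute $\pi_1(V_k)$ to see each $V_k$ is inessential, then use the prime decomposition plus Geometrization to find a non-$S^3$ irreducible summand $P$ and apply \hyperref[obs:ConnSum]{Observation~\ref*{obs:ConnSum}}. Your case-by-case verification that aspherical $P$, $\RP^2\times S^1$, and spherical space forms are essential matches what the paper asserts, and you helpfully fill in the details that the paper leaves tacit (the K\"unneth computation for $\RP^2 \times S^1$ and the periodic-cohomology argument for $\Gamma$ acting freely on $S^3$, using that the classifying map is a $3$-equivalence).

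The ``secondary subtlety'' you flag at the end, however, is in fact the crux of the matter and deserves to be upgraded from a caveat to a genuine open gap. When $M$ is non-orientable, essentiality must be detected in $H_3(K;\bbZtwo)$, whereas an odd-order lens space $P = L(p,q)$ is only $\bbZ$-essential: $H_3\big(K(\bbZp,1);\bbZtwo\big) = 0$ for $p$ odd, so the $\bbZtwo$-reduction of $[P]$ dies under the classifying map. Concretely, for $M = L(3,1)\csum(S^2 \rtimes S^1)$ one has $\pi_1(M) \cong \bbZ_{/3\bbZ} * \bbZ$, and $H_3\big(K(\bbZ_{/3\bbZ} * \bbZ,1);\bbZtwo\big) \cong H_3\big(K(\bbZ_{/3\bbZ},1);\bbZtwo\big) \oplus H_3(S^1;\bbZtwo) = 0$, so every map from $M$ to an aspherical space kills $[M]_{\bbZtwo}$. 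Thus \hyperref[obs:ConnSum]{Observation~\ref*{obs:ConnSum}} as stated cannot close this case, and the paper's proof silently passes over it. Your two proposed repairs do not obviously work either: essentiality does not descend from the orientation double cover to $M$ (the transfer $p_*[\hat M]_{\bbZtwo} = 2[M]_{\bbZtwo} = 0$ gives no information), and switching to a twisted $\bbZ^{w_1}$ coefficient system changes the notion of ``essential'' being used, which Gromov's theorem as cited in the paper is not stated for. To make the converse direction rigorous one must either treat the non-orientable, odd-torsion-only case separately (e.g., verifying directly that such $M$ are essential for a notion of essentiality that supports Gromov's inequality, or else showing they are not essential and excluding them from the statement), or restrict the claim to the cases where the prime decomposition contains a summand that is $\bbZtwo$-essential. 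As written, both the paper's proof and yours leave this case unresolved.
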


\begin{rem}
A manifold, to which \hyperref[thm:Ess]{Theorem~\ref*{thm:Ess}} applies, is essential by \hyperref[prop:Ess1]{Proposition~\ref*{prop:Ess1}}.
\end{rem}

\begin{proof}
Clearly, $V_0=S^3$ is inessential. For non-zero $k \in \bbZ$, $\pi_1(V_k)$ is a rank $k$ free group. Up to homotopy, any map from $V_k$ into an aspherical space $K$ factors through a map into $K(\pi_1(V_k),1) \simeq (S^1\ntts)^{\vee k}$, which has trivial homology $H_i(V_k;\bbA)$ for $i \geq 2$; hence, $V_k$ is inessential. Thus, if $M$ is essential, $M \neq V_k$ for any $k \in \bbZ$.

Now, suppose $M \neq V_k$ for any $k \in \bbZ$. Then, the prime decomposition of $M$ must contain an irreducible summand $P \not \cong S^3\nts$, i.e. $P$ is either (i-a) an aspherical manifold, (i-b) $\RP^2 \times S^1\nts$, or (i-c) a spherical space form other than $S^3\nts$. Aspherical manifolds are clearly essential. $\RP^2 \times S^1$ is essential via the inclusion $\RP^2 \times S^1 \hookrightarrow \RP^\infty \times S^1\nts$. It is also known from cohomology of finite groups that spherical manifolds other than $S^3$ are essential. Hence, the summand $P$ is essential, and hence $M$ is also essential by \hyperref[obs:ConnSum]{Observation~\ref*{obs:ConnSum}}.
\end{proof}

\subsection{Connected Sum and Isosystolic Inequality} \label{ssec:ConnSumIsosys}

We now discuss the relationship between the connected sum operation and our isosystolic inequality. We shall first prove \hyperref[thm:Ess]{Theorem~\ref*{thm:Ess}}; the proof relies on the following observation, which refines \hyperref[obs:ConnSum]{Observation~\ref*{obs:ConnSum}} for the class of manifolds that are relevant to us.

\begin{lem} \label{lem:MV}
If $M$ is a closed $n$-manifold such that $M=Q \csum R$ and $Q$ satisfies the hypothesis of \hyperref[thm:Open]{Theorem~\ref*{thm:Open}}, then $M$ also satisfies the hypothesis of \hyperref[thm:Open]{Theorem~\ref*{thm:Open}}.
\end{lem}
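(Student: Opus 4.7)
The plan is to build, from the given cover $\hat Q \to Q$, a corresponding cover $\hat M \to M$ together with a proper collapse map $\hat c: \hat M \to \hat Q$ of $\bbZtwo$-degree one, and then to pull back the cohomology classes on $\hat Q$ along $\hat c$. To construct $\hat M$, I would write $M = (Q \ssminus \interior B) \cup_{\partial B \sim \partial B'} (R \ssminus \interior B')$ where $B \subset Q$ and $B' \subset R$ are small open balls at the connected-sum sites. Since $B$ is simply connected, the restriction of $p: \hat Q \to Q$ over $B$ trivializes as $p^{-1}(B) = \bigsqcup_\alpha B_\alpha$, one component over each preimage $\hat x_\alpha$ of the center of $B$. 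I would remove each $\interior B_\alpha$ from $\hat Q$ and glue in a copy of $R \ssminus \interior B'$ along $\partial B_\alpha \sim \partial B'$; then $p$ extends naturally to a covering $\hat M \to M$ of the same degree.

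Next, I would define $\hat c: \hat M \to \hat Q$ to be the identity outside the attached pieces, and to collapse each attached copy of $R \ssminus \interior B'$ onto the closed ball $\bar B_\alpha \subset \hat Q$, using a collar of $\partial B'$ mapped radially onto $\bar B_\alpha$ and the remainder mapped onto $\hat x_\alpha$. This makes $\hat c$ continuous and proper, since any compact $K \subset \hat Q$ meets only finitely many $\hat x_\alpha$, so $\hat c^{-1}(K)$ is a finite union of compact pieces. Away from the locally finite set $\{\hat x_\alpha\}$, $\hat c$ is a homeomorphism, so over a generic point of $\hat Q$ its preimage is a single point; hence $\hat c$ has $\bbZtwo$-degree one, and $\hat c_*[\hat M] = [\hat Q]$ in $H_n^\Fsupp(\hat Q;\bbZtwo)$.

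With $\hat c$ in hand, I would set $\bssigma_i := \hat c^* \bszeta_i$ for $i = 1, \ldots, n$. Properness of $\hat c$ ensures that $\hat c^*$ preserves compactly supported cohomology, while pullback of closed-support cohomology is automatic, so each $\bssigma_i$ retains the support type of $\bszeta_i$ and at least one $\bssigma_i$ lies in $H^1_\Ksupp(\hat M;\bbZtwo)$. By naturality of cup products and the projection formula,
\begin{align*}
\langle \bssigma_1 \cup \cdots \cup \bssigma_n, [\hat M] \rangle
&= \langle \hat c^*(\bszeta_1 \cup \cdots \cup \bszeta_n), [\hat M] \rangle \\
&= \langle \bszeta_1 \cup \cdots \cup \bszeta_n, \hat c_*[\hat M] \rangle
= \langle \bszeta_1 \cup \cdots \cup \bszeta_n, [\hat Q] \rangle,
\end{align*}
which is nonzero by Poincar\'e duality on $\hat Q$ since $\bszeta_1 \cup \cdots \cup \bszeta_n \neq \bszero$ in $H^n_\Ksupp(\hat Q;\bbZtwo)$. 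Hence $\bssigma_1 \cup \cdots \cup \bssigma_n \neq \bszero$ in $H^n_\Ksupp(\hat M;\bbZtwo)$, so the cover $\hat M$ witnesses the hypothesis of \hyperref[thm:Open]{Theorem~\ref*{thm:Open}} for $M$.

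The main obstacle is ensuring that the collapse construction behaves correctly in the closed-support setting when $\hat Q$ is infinite, since then $\hat M$ carries infinitely many attached copies of $R \ssminus \interior B'$. In that case, verifying properness of $\hat c$, the projection formula with mixed supports, and the identity $\hat c_*[\hat M] = [\hat Q]$ in $H_n^\Fsupp(\hat Q;\bbZtwo)$ all require some care with locally finite chains; once those formal properties are in place, the non-vanishing of the cup product is immediate from the non-degeneracy of Poincar\'e duality.
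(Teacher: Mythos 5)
Your proof is correct, and you construct exactly the same cover $\hat M$ that the paper does: cut out the lifts $B(\hat x_\alpha)$ of the connected-sum ball from $\hat Q$ and glue in a copy of $R^* = R \ssminus \interior B'$ along each resulting boundary sphere. Where you diverge is in verifying that $\hat M$ carries a nonvanishing cup product. The paper dispatches this with a one-line remark that ``the lemma is just a consequence of Mayer--Vietoris,'' which, if fleshed out, would use the fact that the connecting $(n-1)$-spheres have vanishing $H^1$ (for $n \geq 3$) to transplant the classes $\bszeta_i$ from $\hat Q^*$ to $\hat M$ and then track their cup products across the decomposition. You instead build a proper $\bbZtwo$-degree-one collapse map $\hat c : \hat M \to \hat Q$ and set $\bssigma_i := \hat c^* \bszeta_i$; the nonvanishing of $\bssigma_1 \cup \cdots \cup \bssigma_n$ then drops out of the projection formula together with $\hat c_*[\hat M]=[\hat Q]$. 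This route has real advantages: it produces the witnessing classes on $\hat M$ explicitly rather than through an exact-sequence chase, the support bookkeeping is transparent (properness is exactly what makes $\hat c^*$ act on $H^1_\Ksupp$ and $\hat c_*$ act on $H_n^\Fsupp$, and you flag this), and it sits conceptually closer to Gromov's degree-one heuristic recorded in \hyperref[obs:ConnSum]{Observation~\ref*{obs:ConnSum}} just above. One small imprecision that does no harm: ``away from $\{\hat x_\alpha\}$, $\hat c$ is a homeomorphism'' should read that $\hat c$ restricts to a homeomorphism from $\hat M \ssminus \hat c^{-1}(\{\hat x_\alpha\})$ onto $\hat Q \ssminus \{\hat x_\alpha\}$; the fiber over $\hat x_\alpha$ itself is the compact complement of the collar inside $R^*_\alpha$, not a point, but the local degree computation over a generic point is unaffected.
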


\begin{proof}
Suppose that $M=Q \csum R$ is obtained from gluing $Q^*:=Q \ssminus B(x)$ and $R^*:=R \ssminus B(y)$ along their bounary, where $B(x) \subset Q$ and $B(y) \subset R$ are open regular neighborhoods of $x \in Q$ and $y \in R$ respectively. Let $\hat Q^* \subset \hat Q$ be the lift of $Q^*$; note that $\hat Q^*=\hat Q-\bigcup_\alpha B(\hat x_\alpha)$ and $\partial \hat Q^*=\bigcup_\alpha \partial \bar B(\hat x_\alpha)$, where $B(\hat x_\alpha)$ is the lift of $B(x)$ around the lift $\hat x_\alpha$ of $x$. Gluing a copy $R^*_\alpha \cong R^*$ to each boundary component $\partial \bar B(\hat x_\alpha)$ of $\hat Q^*$, we obtain a cover $\hat M=\hat Q^* \cup \bigcup_\alpha R^*_\alpha$, which is a connected sum of $\hat Q$ and copies $R_\alpha$ of $R$. The lemma is just a consequence of Mayer-Vietoris sequence; we remark that this argument is valid even if the degree of the cover is infinite.
\end{proof}

\begin{proof}[Proof of {\hyperref[thm:Ess]{Theorem~\ref*{thm:Ess}}}]
Let $M$ be a closed 3-manifold given in the hypothesis. The prime decomposition of $M$ contains at least one summand $P$ which is not homeomorphic to $S^2 \times S^1$, $S^2 \rtimes S^1$, or a lens space $L(p,q)$ with odd-order $p$. It follows from the discussions in \hyperref[ssec:Prime]{\S\ref*{ssec:Prime}} and \hyperref[ssec:NonAspherical]{\S\ref*{ssec:NonAspherical}} that $P$ is a closed irreducible 3-manifold which is either (i-a) an aspherical manifold, (i-b) $\RP^2 \times S^1\nts$, or (i-$\mathrm{c}'$) a spherical space form with even-order fundamental group. Note that, from the proof of \hyperref[thm:Geom]{Theorem~\ref*{thm:Geom}} and \hyperref[thm:Asph]{Theorem~\ref*{thm:Asph}}, such a summand $P$ admits a cover $\hat P$ that satisfies the hypothesis of \hyperref[thm:Open]{Theorem~\ref*{thm:Open}}. Hence, by \hyperref[lem:MV]{Lemma~\ref*{lem:MV}}, $M$ admits a cover $\hat M$ that satisfies the hypothesis of \hyperref[thm:Open]{Theorem~\ref*{thm:Open}}. The desired inequality follows.
\end{proof}

Recall that, by the work of Gromov (\hyperref[thm:Gromov]{Theorem~\ref*{thm:Gromov}}), every essential $n$-manifold satisfies an isosystolic inequality; conversely, by the work of Babenko \cite{Babenko:Asymptotic}, an \emph{orientable} $n$-manifold is essential if it satisfies some isosystolic inequality. It is conceivable that the analogue of Babenko's theorem holds for non-orientable $n$-manifolds, so that a non-orientable $n$-manifold $M$ is essential if it satisfies some isosystolic inequality. In dimension 3, this statement can be verified directly.

\begin{prop} \label{prop:Ess2}
A closed 3-manifold is essential if and only if it satisfies some isosystolic inequality.
\end{prop}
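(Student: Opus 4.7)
The plan is to prove the nontrivial implication: if $M$ satisfies some isosystolic inequality, then $M$ is essential; the converse is immediate from Gromov's \hyperref[thm:Gromov]{Theorem~\ref*{thm:Gromov}}. By contraposition together with \hyperref[prop:Ess1]{Proposition~\ref*{prop:Ess1}}, it suffices to exhibit, for each $k \in \bbZ$, a sequence of riemannian metrics $g_m$ on $V_k$ with $\Sys(V_k,g_m)^3 / \Vol(V_k,g_m) \to \infty$.

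The case $k = 0$ is immediate: $V_0 = S^3$ is simply connected, so under the standard convention $\Sys(S^3,g) = +\infty$ (the infimum over an empty collection of loops), while $\Vol(S^3,g) < \infty$ for any metric, so no isosystolic inequality can hold. For $k \neq 0$, I would extend the construction sketched in the Remark following the proof of \hyperref[thm:Geom]{Theorem~\ref*{thm:Geom}} (see \hyperref[ssec:NonAspherical]{\S\ref*{ssec:NonAspherical}}) for $S^2 \times S^1$ and $S^2 \rtimes S^1$ to their iterated connected sums. Namely, on each of the $|k|$ prime summands, place a product-type metric of the form $\varepsilon_m^2 g_{S^2} + g_{S^1}$ with $\varepsilon_m \searrow 0$, excise a small metric ball about a fixed basepoint in each summand, and connect the summands by thin ``necks'' whose widths and volumes are $o(\varepsilon_m^2)$; this yields a well-defined riemannian metric $g_m$ on $V_k$ with $\Vol(V_k, g_m) \to 0$.

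The main issue is to show that $\Sys(V_k,g_m)$ stays bounded below. Here I would use that $\pi_1(V_k)$ is free of rank $|k|$, generated by the $S^1$-factors of the summands under the natural identification. Any non-contractible loop $\gamma$ in $V_k$ represents a nontrivial element of this free group, so after homotopy within $(V_k,g_m)$ we may decompose $\gamma$ into arcs lying in individual summands; each such arc, together with a short connecting path through the neck, represents either the identity or a nonzero power of the generator of that summand. Since $\gamma$ is nontrivial, at least one arc must wrap around an $S^1$-factor, contributing a length bounded below by a constant determined by the unscaled $S^1$ metric, independently of $m$. Thus $\Sys(V_k,g_m)$ is bounded below while $\Vol(V_k,g_m) \to 0$, ruling out any isosystolic inequality for $V_k$. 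The principal technical care, and the only real obstacle, is in the argument that short non-contractible loops cannot ``cheat'' by repeatedly crossing the necks; this requires choosing the necks thin enough (in both diameter and length) that traversing them any bounded number of times costs negligible length, so that the free-group decomposition and the unscaled $S^1$-lengths dominate.
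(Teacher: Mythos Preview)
Your proposal is correct. The chief difference from the paper is organizational: the paper invokes Babenko's theorem \cite{Babenko:Asymptotic} to dispose of the orientable inessential manifolds $V_k$ with $k \geq 0$, and then carries out the shrinking-sphere construction only for the non-orientable $V_{-k}$; you instead handle all $V_k$ uniformly by the direct construction, bypassing Babenko entirely. Your route is more self-contained and arguably more transparent, at the cost of having to verify the systole lower bound for all $k$ rather than just the non-orientable case. The construction itself---shrinking the $S^2$ factors while keeping the $S^1$ lengths fixed, and joining the summands by thin necks---is the same in both arguments, and the paper is no more detailed than you are about the systole lower bound (it simply asserts that $\Sys(V_{-k},g'_m)$ ``remains constant''). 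Your free-group decomposition sketch is the right idea for making that step rigorous.
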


\begin{proof}
By the work of Gromov and Babenko mentioned above, it suffices to show that no isosystolic inequality holds for closed non-orientable inessential 3-manifolds. By \hyperref[prop:Ess1]{Proposition~\ref*{prop:Ess1}}, these manifolds are of the form $V_{-k}=\csum_k(S^2 \rtimes S^1\ntts)$ for some positive integer $k$. As noted in \hyperref[ssec:NonAspherical]{\S\ref*{ssec:NonAspherical}}, there is a sequence $g_m$ of metrics on $V_{-1}=S^2 \rtimes S^1$ such that $\Vol(V_{-1},g_m) \rightarrow 0$ as $m \rightarrow \infty$ while $\Sys(V_{-1},g_m)$ remains constant for all $m$. Taking the connected sum of $(V_{-1},g_m)$ and smoothing out the metric, we obtain a sequence $g'_m$ of metrics on $V_{-k}$ such that $\Vol(V_{-k},g'_m) \rightarrow 0$ as $m \rightarrow \infty$ while $\Sys(V_{-k},g'_m)$ remains constant for all $m$. Hence, no isosystolic inequality holds for $V_{-k}$.
\end{proof}

\hyperref[thm:Ess]{Theorem~\ref*{thm:Ess}} should be viewed in the light of \hyperref[prop:Ess1]{Proposition~\ref*{prop:Ess1}} and \hyperref[prop:Ess2]{Proposition~\ref*{prop:Ess2}}. These propositions together states that a closed manifold $M$ is essential if and only if $M \not \in \{V_k\}_{k \in \bbZ}$, if and only if some isosystolic inequality holds for $M$. For most of these essential manifolds, \hyperref[thm:Ess]{Theorem~\ref*{thm:Ess}} establishes an inequality with an isosystolic constant $C=6$. Remaining essential manifolds excluded in \hyperref[thm:Ess]{Theorem~\ref*{thm:Ess}} are of the form $L(p_1,q_1) \csum \cdots \csum L(p_m,q_m) \csum V_k$ with $m \geq 1$, odd orders $p_i$, and an integer $k$. 

It is desirable to obtain an inequality with a reasonably small isosystolic constant for these remaining manifolds. If the inessential summand $V_k$ is trivial, i.e. $k=0$, then the work of Sabourau \cite[Thm.3.1]{Sabourau} shows that the optimal isosystolic constant for a connected sum $L(p_1,q_1) \csum \cdots \csum L(p_m,q_m)$ has an upper bound that tends to zero as $m \rightarrow \infty$, regardless of $p_i$ and $q_i$. It seems that one could also extend his result to allow inessential summands, as long as $m \rightarrow \infty$ for the number $m$ of lens space summands. However, $m$ would have be quite large to obtain a small isosystolic constant from asymptotic results of this type.

As of now, the best known isosystolic constant is still quite large for many of these remaining manifolds, perhaps as large as Gromov's universal constant $C_3 \approx$ 32 billion in some cases. In order to have a reasonably small isosystolic constant for them, it would be essential to study odd-order lens spaces closely.




\bibliography{SysZ2}
\bibliographystyle{amsalpha}

\end{document}